\definecolor{lightblue}{rgb}{.85,.93,1}
\newcommand{\norme}[1]{\lvert #1 \rvert_E}
\newcommand{\ch}{\textup{child}}
\newcommand{\cs}{\mathcal{S}}
\newcommand{\cl}{\mathcal{L}}
\newcommand{\ct}{\mathcal{T}}
\newcommand{\ca}{\mathcal{A}}
\newcommand{\cf}{\mathcal{F}}
\newcommand{\cu}{\mathcal{U}}
\newcommand{\cg}{\mathcal{G}}
\newcommand{\bn}{\mathbb{N}}
\newcommand{\br}{\mathbb{R}}
\newcommand{\dmu}{\,\mathrm{d}\mu}
\newcommand{\dnu}{\,\mathrm{d}\nu}
\newcommand{\dx}{\,\mathrm{d}x}
\newcommand{\bz}{\mathbb{Z}}
\newcommand{\be}{\mathbb{E}}
\newcommand{\bp}{\mathbb{P}}
\newcommand{\ba}{\begin{eqnarray*}}
\newcommand{\ea}{\end{eqnarray*}}
\newcommand{\rn}{\mathbb{R}^d}
\newcommand{\es}{E_{\mathcal{S}}}
\newcommand{\normlp}[1]{\lVert#1\rVert_{L^p(E)}}
\newcommand{\normlpp}[1]{\lVert#1\rVert_{L^{p'}(E^*)}}
\newcommand{\lpvv}{L^p(\mathbb{R}^d;E)}
\newcommand{\lpdirect}{L^p(\mathbb{R}^d;E)}
\newcommand{\lpdual}{L^{p'}(\mathbb{R}^d;E^*)}
\newcommand{\vs}{E}
\newcommand{\ve}{e}
\newcommand{\ave}[1]{\langle #1\rangle}
\newcommand{\Exp}[0]{\mathbb{E}}
\def\cyr{\fontencoding{OT2}\fontfamily{wncyr}\selectfont}
\DeclareTextFontCommand{\textcyr}{\cyr}
\newcommand{\abs}[1]{\lvert#1\rvert}
\newcommand{\normes}[1]{\lvert #1 \rvert_{E^*}}
\theoremstyle{plain}
\newtheorem{theorem}{Theorem}[section]
\newtheorem{lemma}[theorem]{Lemma}
\newtheorem{corollary}[theorem]{Corollary}
\newtheorem{proposition}[theorem]{Proposition}
\theoremstyle{definition}
\newtheorem{definition}[theorem]{Definition}
\theoremstyle{definition}
\theoremstyle{remark}
\newtheorem*{remark}{Remark}
\numberwithin{equation}{section}
\newcommand{\norm}[1]{\lVert#1\rVert}
\newcommand{\chs}{\textup{ch}_\mathcal{S}}
\newcommand{\pis}{{\pi_\mathcal{S}}}
\newcommand{\cd}{\mathcal{D}}
\newcommand{\cq}{\mathcal{Q}}
\begin{document}

\date{\today}
\subjclass[2010]{Primary 42B20; Secondary 46E40}
     
\keywords{operator-valued, vector-valued, dyadic shift, dyadic representation, paraproduct, T1, UMD, R-boundedness, decoupling, Pythagoras}

\title{Operator-valued dyadic shifts and the $T(1)$ theorem}

\author{Timo S. H\"anninen}
\address{Department of Mathematics and Statistics, University of Helsinki, P.O. Box 68, FI-00014 HELSINKI, FINLAND}
\email{timo.s.hanninen@helsinki.fi}
\author{Tuomas P. Hyt\"onen}
\address{Department of Mathematics and Statistics, University of Helsinki, P.O. Box 68, FI-00014 HELSINKI, FINLAND}
\email{tuomas.hytonen@helsinki.fi}

\begin{abstract}
In this paper we extend dyadic shifts and the dyadic representation theorem to an operator-valued setting: We first define operator-valued dyadic shifts and prove that they are bounded. We then extend the dyadic representation theorem, which states that every scalar-valued Calder\'on--Zygmund operator can be represented as a series of dyadic shifts and paraproducts averaged over randomized dyadic systems, to operator-valued Calder\'on--Zygmund operators. As a corollary, we obtain another proof of the operator-valued, global $T1$ theorem. 

We work in the setting of integral operators that have $R$-bounded operator-valued kernels and act on functions taking values in $UMD$-spaces. The domain of the functions is the Euclidean space equipped with the Lebesgue measure. 

In addition, we give new proofs for the following known theorems: Boundedness of the dyadic (operator-valued) paraproduct, a variant of Pythagoras' theorem for (vector-valued) functions adapted to a sparse collection of dyadic cubes, and a decoupling inequality for (UMD-valued) martingale differences.

\end{abstract}
\maketitle
\tableofcontents
\section{Introduction}

In this paper we extend dyadic shifts and the dyadic representation theorem to an operator-valued setting. We work with integral operators that have $R$-bounded operator-valued kernels and act on functions taking values in $UMD$-spaces. The domain of the functions is the Euclidean space equipped with the Lebesgue measure. 

First, we summarize what is known in the scalar-valued setting. A {\it dyadic shift} $S^{ji}$ with parameters $i$ and $j$ (and {\it complexity} $\max\{i,j\}+1$)  is defined by 
$$
S^{ji}f:=\sum_{K\in\cd} D^j_K A_K D^i_Kf,
$$
which involves the following ingredients:
\begin{itemize}
\item the {\it shifted Haar projection} $D_K^i$ associated with a dyadic cube $K\in\cd$ is defined by $$D^i_Kf:= \sum_{\substack{I\in\cd: I\subseteq K,\\\ell(I)={2^{-i}\ell(K)}}}D_If,$$
\item the {\it Haar projection} $D_I$ associated with a dyadic cube $I\in\cd$ is defined by $$D_If:=\sum_{I'\in\textup{child}(I)} \langle f\rangle_{I'}1_{I'}-\langle f \rangle_I 1_I= \sum_{\eta \in \{0,1\}^d\setminus\{0\}}\langle f, h^\eta_I \rangle h^\eta_I,$$ 
where $\textup{child}(I)$ denotes the dyadic children of $I$,  $\langle f \rangle_I:=\frac{1}{\abs{I}}\int_I f\dx$, and $\{h^\eta_I\}_{\eta \in \{0,1\}^d}$ are the Haar functions associated with $I$,
\item the averaging operator $A_K$ is defined by $$A_Kf(x):=\frac{1_K(x)}{\lvert K \rvert}\int_K a_K(x,x')f(x')\,\mathrm{d}x',$$ where it is assumed that the kernels  satisfy  $\lvert a_K(x,x')\rvert \leq 1$ for all $K\in\cd$, $x\in K$, and $x'\in K$.
\end{itemize}
The {\it dyadic paraproduct} associated with a function $b:\br^d\to\br$ is defined by $$\Pi_bf:=\sum_{Q\in\cd} D_Qb\,\langle f \rangle_Q= \sum_{Q\in\cd} \sum_{\eta \in \{0,1\}^d\setminus\{0\}}\langle b, h^\eta_Q \rangle \langle f \rangle_Q h^\eta_Q.$$

Dyadic shifts are bounded on $L^p$ for $1<p<\infty$. Indeed, by Pythagoras' theorem, they are bounded on $L^2$, and, by using the Calder\'on--Zygmund decomposition, from $L^1$ to $L^{1,\infty}$. From the Marcinkiewicz interpolation theorem, it follows that dyadic shifts are bounded on $L^p$ for $1<p\leq 2$, and hence, by duality, on $L^p$ for $2\leq p <\infty$. The weak-$L^1$ bound with an exponential dependence on the complexity was proven by Lacey, Petermichl, and Reguera \cite{lacey2010} and with a linear depence by Hyt\"onen \cite{hytonen2012a}. It is a classical result that a dyadic paraproduct associated with a function $b$ is bounded on $L^p$ if and only if $b$ is a BMO function.

Dyadic shifts are dyadic model operators for Calder\'on--Zygmund operators: Petermichl \cite[Lemma 2.1]{petermichl2000} proved that the Hilbert transform can be represented as a particular dyadic shift averaged over randomized dyadic systems, and  Hyt\"onen \cite[Theorem 4.2]{hytonen2012a} that every Calder\'on--Zygmund operator can be represented as a series of dyadic shifts and paraproducts averaged over randomized dyadic systems. The dyadic representation theorem for Calder\'on--Zygmund operators together with the boundedness of dyadic shifts and paraproducts yields another proof of the global $T1$ theorem for Calder\'on--Zygmund operators. For a detailed proof of the dyadic representation theorem, see the lecture notes on the $A_2$ theorem \cite{hytonen2011}.

The operator-valued setting in this paper follows the by-now-usual paradigm of doing Banach-space valued harmonic analysis beyond Hilbert space: Orthogonality of vectors is replaced with unconditionality of martingale differences, and uniform boundedness of operators with $R$-boundedness. Pioneering examples of this are the result by Burkholder \cite{burkholder1981} and Bourgain \cite{bourgain1983} that the Hilbert transform is bounded on $L^p(E)$ if and only if the Banach space $E$ has the UMD property, and the operator-valued Fourier multiplier theorems by Weis \cite{weis2001}. 

A family of operators $\ct\subseteq \cl(E,F)$ from a Banach space $(E,\lvert \,\cdot\, \rvert_E)$ to a Banach space $(F,\lvert\,\cdot\, \rvert_F)$ is said to be {\it $R$-bounded} if there exists a constant $\mathcal{R}_p(\ct)$ such that
$$
\Big(\be \lvert \sum_{n=1}^N \varepsilon_n T_ne_n \rvert_F ^p \Big)^{1/p} \leq \mathcal{R}_p(\ct) \Big(\be \lvert \sum_{n=1}^N \varepsilon_ne_n \rvert_E ^p \Big)^{1/p} 
$$
for all choices of operators $(T_n)_{n=1}^N\subseteq \ct$ and vectors $(e_n)_{n=1}^N\subseteq E$, where the expectation is taken over independent, unbiased random signs $(\varepsilon_n)_{n=1}^N$. A Banach space $(E,\lvert\,\cdot \,\rvert_E)$ is said to be a {\it UMD (unconditional martingale difference) space} if there exists a constant $\beta_p(E)$ such that
$$
\norm{\sum_{n=1}^N \epsilon_n d_n}_{L^p(E)}\leq \beta_p(E) \norm{\sum_{n=1}^N d_n}_{L^p(E)}
$$
for all $E$-valued $L^p$-martingale difference sequences $(d_n)_{n=1}^N$ and for all choices of signs $(\epsilon_n)_{n=1}^N \in \{-1,+1\}^N$. It is well-known that $R$-boundedness and UMD-property are independent (up to the involved constants) of the exponent $p\in(1,\infty)$; For an exposition on Banach-space-valued martingales, UMD spaces, and $R$-boundedness,  among other things, see Neerven's lecture notes \cite{neerven2008}. 

We conclude the introduction by  precisely fixing the operator-valued setting and stating the results. First, we define the operator-valued dyadic shifts and state their boundedness.

\begin{definition}[Operator-valued dyadic shift]Let $E$ be a UMD space. An {\it operator-valued dyadic shift} associated with parameter $j$ and $i$ is defined by
$$
S^{ji}f:=\sum_{K}D^j_K A_K D^i_Kf$$
for every locally integrable function $f:\br^d\to E$, where, for each $K\in\cd$, the averaging operator $A_K$ associated with an operator-valued kernel $a_K:\br^d\times \br^d\to \cl(E)$ is defined by $$A_Kf(x):=\frac{1_K(x)}{\lvert K \rvert}\int_K a_K(x,x')f(x')\,\mathrm{d}x.$$ The family of the operator-valued kernels is assumed to be $R$-bounded so that there exists a positive constant $\mathcal{R}_p(\{a\})$ such that $$\mathcal{R}_p(\{a_K(x,x')\in\cl(E) : K\in\cd, x\in K, x'\in K\})\leq \mathcal{R}_p(\{a\}).$$ 

\end{definition}
Let $L^p(\br^d;E)$ denote the Lebesgue--Bochner space, which is equipped with the norm
$$
\norm{f}_{L^p(\br^d;E)}=\Big(\int_{\br^d} \lvert f(x) \rvert_E^p \,\mathrm{d}x \Big)^{1/p}.
$$
We prove the following theorem:
\begin{theorem}[Operator-valued dyadic shifts are bounded]\label{dyadicshiftsarebounded}Let $1<p<\infty$. Let $\vs$ be a UMD space. Let $S^{ij}$ be a {\it dyadic shift with parameters} $i$ and $j$ and associated with the operator-valued kernels $a_K$. Then
$$
\lVert S^{ij}f \rVert_{\lpvv}\leq 4  (\max\{i,j\}+1) \,\mathcal{R}_p(\{a\}) \beta_p(E)^2 \lVert f \rVert_{\lpvv}
$$
for all $f\in \lpvv$.
\end{theorem}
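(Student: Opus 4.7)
The approach rests on three ingredients: (i) a reformulation of $S^{ji}$ in terms of global Haar martingale-difference projections and generation-$n$ averaging operators; (ii) the UMD property of $E$, applied twice to introduce and then remove Rademacher signs on the input and output Haar expansions; (iii) the $R$-boundedness of the kernels, which passes to $R$-boundedness of the averaging operators on $L^p(\R^d;E)$.

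Let $d_k := \mathcal{E}_{k-1}-\mathcal{E}_k$ denote the Haar martingale-difference projection at level $k$. Since $\sum_{K:\,\ell(K)=2^n}D^i_K=d_{n-i}$, one may rewrite
$$
S^{ji}f \;=\; \sum_{n\in\Z} d_{n-j}\,\mathcal{A}^{(n)}\,d_{n-i}\,f,
\qquad
\mathcal{A}^{(n)}g(x) := \frac{1}{|K_n(x)|}\int_{K_n(x)} a_{K_n(x)}(x,x')\,g(x')\,\dx',
$$
where $K_n(x)$ is the dyadic cube of side $2^n$ containing $x$. To produce the complexity factor $\max\{i,j\}+1$, I split $\Z$ into $m := \max\{i,j\}+1$ arithmetic progressions modulo $m$ and decompose $S^{ji}=\sum_{r=0}^{m-1}S^{ji}_r$ accordingly. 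Within a single class, consecutive generations are separated by more than $\max\{i,j\}$, which prevents the output Haar levels $\{n-j\}$ from colliding with the input Haar levels $\{n-i\}$ of other $n$'s in the class.

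Now fix a residue class. Since the terms $d_{n-j}\mathcal{A}^{(n)}d_{n-i}f$ live at pairwise distinct Haar levels, they form a martingale-difference sequence, so by UMD
$$
\|S^{ji}_r f\|_{L^p(E)} \;\leq\; \beta_p(E)\,\mathbb{E}_\epsilon\Big\|\sum_n\epsilon_n\,d_{n-j}\,\mathcal{A}^{(n)}\,d_{n-i}\,f\Big\|_{L^p(E)}.
$$
The pointwise $R$-bound on $\{a_K(x,x')\}$ passes, via Fubini and Minkowski's integral inequality, to an $R$-bound on $\{\mathcal{A}^{(n)}\}$ on $L^p(\R^d;E)$ with constant $\mathcal{R}_p(\{a\})$. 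Combined with the distinct-levels property, which lets $\sum_n\epsilon_n d_{n-j}(\cdot)$ be read as a single Haar expansion rather than as a composition involving the $d_{n-j}$ as a separate $R$-bounded family, this produces a bound $\leq 4\,\mathcal{R}_p(\{a\})\,\mathbb{E}_\epsilon\|\sum_n\epsilon_n d_{n-i}f\|_{L^p(E)}$. A second application of UMD estimates the latter by $\beta_p(E)\|f\|_{L^p(E)}$. Summing the resulting per-class bound $4\,\mathcal{R}_p(\{a\})\,\beta_p(E)^2\,\|f\|_{L^p(E)}$ over the $m$ residue classes gives the theorem.

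The main obstacle is the coordination in the middle step: to keep the overall UMD constant at $\beta_p(E)^2$ rather than $\beta_p(E)^3$, one must absorb the output projections $d_{n-j}$ directly into the Haar-basis unconditionality rather than treating $\{d_{n-j}\}$ as an additional $R$-bounded family that would contribute its own factor of $\beta_p(E)$. The residue-class decomposition is precisely what makes this absorption clean, because within each class the $d_{n-j}$'s project onto well-separated Haar levels and the Rademacher signs $\epsilon_n$ can be identified with Haar-basis signs; the numerical constant $4$ absorbs inessential losses from the Fubini--Minkowski passage.
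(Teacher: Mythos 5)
Your general architecture — split by residue class modulo $\max\{i,j\}+1$, apply UMD on the way in and on the way out — matches the paper's, and your second UMD application (estimating $\mathbb{E}_\epsilon\|\sum_n\epsilon_n d_{n-i}f\|$ by $\beta_p(E)\|f\|$ by viewing it as a martingale transform with random $\{0,\pm 1\}$ coefficients) is fine. The middle step, however, contains two real gaps, and both are symptoms of the same missing idea: you have not decoupled.

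First, the claim that the pointwise $R$-bound of the kernels $\{a_K(x,x')\}\subset\cl(E)$ transfers, ``via Fubini and Minkowski,'' to an $R$-bound of the operator family $\{\mathcal{A}^{(n)}\}$ on $L^p(\br^d;E)$ with the same constant $\mathcal{R}_p(\{a\})$ is false. The operators $\mathcal{A}^{(n)}$ are generalized conditional expectations, not pointwise multiplications: the domain of integration $K_n(x)$ varies with $n$, so you cannot collapse the expression $\sum_n\epsilon_n\mathcal{A}^{(n)}g_n(x)$ into a single averaged application of the kernel $R$-bound. Concretely, take $a_K\equiv \mathrm{Id}$, so $\mathcal{R}_p(\{a\})=1$ and $\mathcal{A}^{(n)}=\mathcal{E}_n$. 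The $R$-bound of the conditional expectations on $L^p(\br^d;E)$ is governed by Stein's inequality (Proposition \ref{steininequality}) and is of size $\beta_p(E)$; already for $E=\br$ and $p\neq 2$ it exceeds $1$. So this step must cost you a factor comparable to $\beta_p(E)$ that you have not accounted for.

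Second, the removal of the output projections $d_{n-j}$ is not free. They are genuine conditional-expectation differences acting on the non-Haar-elementary functions $\mathcal{A}^{(n)}d_{n-i}f$; the fact that the resulting terms sit at pairwise distinct Haar levels does not let you erase the $d_{n-j}$. Without some additional device you would again have to invoke Stein, costing another $\beta_p(E)$. Together with the first gap, an honest version of your argument lands at $\beta_p(E)^3$ or worse, not $\beta_p(E)^2$.

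What the paper does to avoid both costs is the point you should internalize: after the residue-class split, it applies the decoupling equality (Theorem \ref{martingaledecoupling}) and works on a product space $X\times Y$ with $Y=\prod_K K$. After decoupling, $D^j_K$ becomes a difference of conditional expectations with respect to \emph{stochastically independent} sub-$\sigma$-algebras (one per coordinate $K$ of $Y$), and Lemma \ref{independentfunctions} shows their sum is an $L^p$-contraction with constant $1$ — no UMD, no Stein, only independence — yielding the innocuous factor $2$. For the averaging operator, the paper introduces a second independent copy $\tilde{Y}$ so that $A_Kf_K(y_K)=\int_{\tilde Y}1_K(y_K)a_K(y_K,\tilde y_K)f_K(\tilde y_K)\,\mathrm{d}\tilde\nu(\tilde y)$, a \emph{single} probability space of integration shared by all $K$; then Jensen plus the pointwise kernel $R$-bound suffices and costs only $\mathcal{R}_p(\{a\})$. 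The two $\beta_p(E)$'s come solely from applying the decoupling equality once in each direction, and the $4$ from applying Lemma \ref{independentfunctions} to $D^j_K$ and then to $D^i_K=D^i_K\sum_{m=0}^{L-1}D^m_K$ (which also performs the padding that makes your ``second UMD'' step clean). In short, your plan needs decoupling — the residue-class split alone does not discharge either middle step.
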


Next, we define the operator-valued Calder\'on--Zygmund operators and state the dyadic representation theorem for them. Following the paradigm of replacing orthogonality by unconditionality of martingale differences and uniform boundedness by $R$-boundedness,
the standard estimates and the weak boundedness property are replaced by the {\it  Rademacher standard estimates} and {\it Rademacher weak boundedness property}.

\begin{definition}[Rademacher standard estimates] An operator-valued singular kernel $k:\rn\times\rn\setminus \{(x,x):x\in \rn\} \to \cl(E)$ satisfies the {\it Rademacher standard estimates} if and only if:  
\begin{enumerate}
\item[(i)]The kernel $k$ satisfies the decay estimate $$\mathcal{R}(\{k(x,y)\lvert x-y \rvert^{d}  : x\in\mathbb{R}^d, y\in\mathbb{R}^d \text{ with } x\neq y\})\leq \mathcal{R}_{\text{CZ}_0}$$
for some constant $\mathcal{R}_{\text{CZ}_0}$.
\end{enumerate}

\begin{enumerate}
\item[(ii)]The kernel $k$ satisfies the H\"older-type estimates  \begin{equation*}
\begin{split}\mathcal{R}\Bigg(\Bigg\{&(k(x,y)-k(x',y))\left(\frac{ \lvert x-y \rvert}{\lvert x-x' \rvert}\right)^{\alpha} \lvert x-y \rvert^{d} : \\
&x\in\mathbb{R}^d, x'\in\mathbb{R}^d, y\in\mathbb{R}^d \text{ with } 0<\lvert x-x' \rvert<\frac{1}{2}\lvert x-y \rvert)\Bigg\}\Bigg)\leq \mathcal{R}_{\text{CZ}_\alpha}
\end{split}
\end{equation*}
and
\begin{equation*}
\begin{split}\mathcal{R}\Bigg(\Bigg\{&(k(x,y)-k(x,y'))\left(\frac{ \lvert x-y \rvert}{\lvert y-y' \rvert}\right)^{\alpha} \lvert x-y \rvert^{d} : \\
&x\in\mathbb{R}^d, y\in\mathbb{R}^d, y'\in\mathbb{R}^d \text{ with } 0<\lvert y-y' \rvert<\frac{1}{2}\lvert x-y \rvert)\Bigg\}\Bigg)\leq \mathcal{R}_{\text{CZ}_\alpha}
\end{split}
\end{equation*}
for some H\"older exponent $\alpha \in(0,1]$ and for some constant $\mathcal{R}_{\text{CZ}_\alpha}$.
\end{enumerate}
\end{definition}

\begin{definition}[Rademacher weak boundedness property]An operator $T$ mapping locally integrable $E$-valued functions to locally integrable $E$-valued functions satisfies the {\it Rademacher weak boundedness property} if and only if 
$$
\mathcal{R}\Big(\Big\{ \frac{1}{\lvert I \rvert} \int_{\br^d} 1_I(x) T(\,\cdot\, 1_I )(x)\, \mathrm{d}x\in \cl(E): I\in\mathcal{D} \Big\}\Big)\leq \mathcal{R}_{\text{WBP}}
$$
for some constant $\mathcal{R}_{\text{WBP}}$.
\end{definition}

The randomized dyadic systems are defined as follows. Let $\mathcal{D}^0$ designate the standard dyadic system. For every parameter $(\omega_j)_{j\in\mathbb{Z}}\in (\{0,1\}^d)^{\mathbb{Z}}=:\Omega$ and every $I\in\mathcal{D}^0$, the translated dyadic cube $I\dot+ \omega$ is defined by $$
I\dot+ \omega:= I+\sum_{j: 2^{-j} < \ell(I)}2^{-j}\omega_j.
$$
For each $\omega\in\Omega$, the translated dyadic system $\mathcal{D}^\omega$ is defined by $\mathcal{D}^\omega:=\{I\dot+\omega : I\in\mathcal{D}^0\}. $ We equip the parameter set with the natural probability measure: Each component $\omega_j\in \{0,1\}^d$ has an equal probability $2^{-d}$ of taking any of the $2^d$ values and all components are stochastically independent. 

\begin{theorem}[Operator-valued dyadic representation theorem]\label{dyadicrepresentation}Let $\vs$ be a Banach space. Let $T$ be a singular integral operator that satisfies the Rademacher weak boundedness property and whose operator-valued kernel satisfies the Rademacher standard estimates with the H\"older exponent $\alpha$. Assume that $T:\lpvv\to\lpvv$ is bounded. Then, for some dyadic shifts $S^{ij}_{\mathcal{D}^\omega}$ and for the dyadic paraproducts $\Pi^{\mathcal{D}^\omega}_{T1}$ and $\Pi^{\mathcal{D}^\omega}_{T^*1}$, we have
\begin{equation*}
\begin{split}
\langle g,Tf\rangle=\,\mathbb{E}_\omega \Big(&C_T\sum_{i\geq 0, j\geq0}2^{(1/\epsilon)} 2^{-(1-\epsilon)\alpha\max\{i,j\}}\langle g,S^{ij}_{\mathcal{D}^\omega}f\rangle\\
& +\langle g,(\Pi^{\mathcal{D}^\omega}_{T1}+(\Pi^{\mathcal{D}^\omega}_{T^*1})^*)f\rangle \Big)
\end{split}
\end{equation*}
for all $g\in C^1_0(\mathbb{R}^d;\mathbb{R})\otimes E^*$ and $f\in C^1_0(\mathbb{R}^d;\mathbb{R})\otimes E$. Moreover, 
$$
C_T\lesssim_{d,\alpha} \mathcal{R}_{CZ_0} +\mathcal{R}_{CZ_\alpha}+\mathcal{R}_\text{WBP}.
$$
\end{theorem}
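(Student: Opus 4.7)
The plan is to adapt Hyt\"onen's scalar dyadic representation argument \cite{hytonen2012a} to the operator-valued setting, replacing pointwise kernel bounds by their $R$-bounded variants and orthogonality-based arguments by UMD-based ones; the boundedness of the resulting dyadic shifts is then already supplied by Theorem \ref{dyadicshiftsarebounded}. The first step is to expand $f\in C^1_0\otimes E$ and $g\in C^1_0\otimes E^*$ in the Haar basis associated with the randomized dyadic system $\cd^\omega$ and write
$$
\langle g,Tf\rangle=\sum_{I,J\in\cd^\omega}\sum_{\eta,\mu}\bigl\langle \langle g,h_J^\mu\rangle,\, T_{IJ}^{\mu\eta}\langle f,h_I^\eta\rangle\bigr\rangle_{E^*\!,E},
$$
where $T_{IJ}^{\mu\eta}:=\int\!\!\int h_J^\mu(x)\,k(x,y)\,h_I^\eta(y)\,dx\,dy\in\cl(E)$ is the operator-valued Haar matrix coefficient, defined off-diagonal through the kernel and on-diagonal through the Rademacher weak boundedness property.

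The second step is the Nazarov-Treil-Volberg good/bad decomposition: call $I\in\cd^\omega$ \emph{good} with parameters $(r,\gamma)$ if for every dyadic ancestor $J$ with $\ell(J)\geq 2^r\ell(I)$, the cube $I$ lies at distance at least $\ell(I)^\gamma\ell(J)^{1-\gamma}$ from $\partial J$. Averaging over $\omega$ reduces $\be_\omega\langle g,Tf\rangle$ to pairs whose smaller cube is good, up to a multiplicative constant. Then split the good pairs by geometry into (a) \emph{near or comparable-scale} pairs, for which either $I\cap J=\emptyset$ or the two cubes have scales differing by at most a factor of $2^r$, and (b) \emph{deeply nested} pairs whose smaller cube has side length at most $2^{-r}$ times the larger.

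For pairs of type (a), let $K\supseteq I\cup J$ denote their common dyadic ancestor and fix $i,j$ by $\ell(I)=2^{-i}\ell(K)$, $\ell(J)=2^{-j}\ell(K)$. The Rademacher H\"older estimates together with the vanishing means of Haar functions supply a normalization $c_{ij}\sim 2^{-(1-\epsilon)\alpha\max\{i,j\}}$ such that the family $\{c_{ij}^{-1}T_{IJ}^{\mu\eta}\}_{I,J,\mu,\eta}$ is $R$-bounded in $\cl(E)$; the exponent $(1-\epsilon)$ and the compensating prefactor $2^{1/\epsilon}$ both arise from the goodness parameter $\gamma$ being tuned close to one to force a geometric sum over intermediate scales to converge. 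Organizing this sum by $K$ reads off precisely the dyadic shift $S^{ij}_{\cd^\omega}$ introduced earlier, with $R$-bounded averaging kernels. For pairs of type (b) with $I\subsetneq J$, the goodness of $I$ forces $h_J^\mu|_I$ to be constant, so $T_{IJ}^{\mu\eta}$ splits as a principal term, equal up to signs to $\langle h_J^\mu\rangle_I\langle T1,h_I^\eta\rangle$, plus a remainder. Summing the principal term in $J$ telescopes to the paraproduct $\Pi^{\cd^\omega}_{T1}$, while the remainder fits back into the type (a) shifts with the same decay. The symmetric case $J\subsetneq I$ produces $(\Pi^{\cd^\omega}_{T^*1})^*$.

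The main technical obstacle is the $R$-boundedness bookkeeping: every estimate that in the scalar case yielded a uniform norm bound on $T_{IJ}^{\mu\eta}$ must now be upgraded to an $R$-bound on the appropriately normalized family, since only $R$-bounded kernels satisfy the hypothesis of Theorem \ref{dyadicshiftsarebounded}. This requires systematic use of the Kahane contraction principle to pass from the $R$-boundedness of the kernel values $k(x,y)$ (as supplied by the Rademacher standard estimates) to the $R$-boundedness of the integral averages defining $T_{IJ}^{\mu\eta}$, and a parallel argument is needed to verify that $T1$ and $T^*1$ define $R$-$\BMO$-type functions whose paraproducts are bounded on $\lpvv$.
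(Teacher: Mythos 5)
Your roadmap (Haar expansion, random dyadic grids with the good/bad decomposition, reorganization by minimal common ancestor $I\vee J$, $R$-bound bookkeeping for the matrix elements) is the same one the paper follows, so on the whole you are proving the right theorem by the right method. But several of the specifics as stated would derail the argument if carried out literally, so let me flag them.

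The most consequential issue is where you extract the paraproducts. You propose to pull the principal term $\langle h_J\rangle_I\langle 1, Th_I\rangle$ out only for the \emph{deeply nested} pairs (your type (b), $\ell(I)<2^{-r}\ell(J)$), leaving the near-nested pairs with their raw matrix coefficients $\langle h_J, Th_I\rangle$. Two problems result. First, the sum $\sum_{J\supsetneq I,\ \ell(J)\geq 2^r\ell(I)}\langle g,h_J\rangle\langle h_J\rangle_I$ telescopes to $\langle g\rangle_{I^{(r)}}$, not $\langle g\rangle_I$, so you obtain a shifted paraproduct rather than $\Pi^{\cd^\omega}_{T^*1}$ as the theorem asserts. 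Second, the raw near-nested coefficients $\langle h_J, Th_I\rangle$ contain a pairing of the non-cancellative indicator $1_{J_I}$ against $Th_I$, and there is no decay or cancellation left to close the estimate; this is precisely what the paper's uniform replacement
\[
\langle h_J,Th_I\rangle\ \longrightarrow\ \langle 1_{J_I^c}(h_J-\langle h_J\rangle_{J_I}),Th_I\rangle
\]
for \emph{all} nested pairs $I\subsetneq J$ is designed to avoid (see the discussion around \eqref{t1interpretation} and Lemma \ref{lastcase}). You should extract the paraproduct from every nested pair, not just the deeply nested ones, and then the resulting near-nested case is handled by the weak boundedness property plus the decay estimate.

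A few smaller inaccuracies: it is nesting $I\subsetneq J$, not goodness of $I$, that makes $h_J^\mu$ constant on $I$ (goodness is a separation-from-boundary condition used later to gain the $\mathrm{dist}(I,J_I^c)$ lower bound in Lemmas \ref{firstcase} and \ref{secondcase}). The boundary exponent is tuned \emph{small}, $\gamma=\frac{\epsilon\alpha}{\alpha+d}\to 0$ as $\epsilon\to 0$, not close to one; the $2^{1/\epsilon}$ prefactor is the price of absorbing the polynomial loss $2^{i\gamma d}$ from the Haar normalization into the geometric factor $2^{-i\alpha(1-\gamma)}$. The $R$-bound bookkeeping in the paper does not go through the Kahane contraction principle directly but through Proposition \ref{propertyintegral} (averaging with integrable weights preserves $R$-bounds, with the $L^1$-norm of the weight as the multiplier) together with Proposition \ref{propertytriangle}; this is the precise device you need to convert the $R$-boundedness of the pointwise kernel values into $R$-boundedness of the averaged operators $a^{ij}_K(x',x)$. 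Finally, no $R$-$\BMO$ verification for $T1$, $T^*1$ is needed to prove the representation identity itself — Theorem \ref{dyadicrepresentation} is an algebraic decomposition valid under the a priori boundedness assumption on $T$; the BMO hypotheses enter only when one turns the representation into the $T1$ corollary by invoking Theorem \ref{sufficientcondition}.
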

\begin{remark}The statement contains an auxiliary parameter $\epsilon$ with $0<\epsilon<1$. The factor $2^{(1/\epsilon)} 2^{-(1-\epsilon)\alpha\max\{i,j\}}$ can be replaced with the factor $(1+\max\{i,j\})^{\gamma(d+\alpha)}2^{-\alpha\max\{i,j\}}$. This is achieved by replacing the \lq boundary\rq\, function $t\mapsto t^\gamma$ with the function $t\mapsto (1+a^{-1}\log (t^{-1}))^{-\gamma}$ in the definition of a good dyadic cube, Definition \ref{def_goodcube}, which then results in the decay $2^{-\alpha\max\{i,j\}}$ in the estimates for the matrix elements, Lemma \ref{firstcase} and Lemma \ref{secondcase}. For the details, see the lecture notes on the $A_2$ theorem \cite{hytonen2011}. For simplicity, we use the function $t\mapsto t^\gamma$.
\end{remark}

 For a Banach space $(\ct,\lvert\, \cdot \,\rvert_\ct)$, the $\text{BMO}_p(\br^d;\ct)$-norm is defined by
$$
\norm{b}_{\text{BMO}_p(\br^d;\ct)}:=\sup_{Q\in\cd}\Big(\frac{1}{\lvert Q \rvert} \int_Q \lvert b(x)-\langle b \rangle_Q \rvert_\ct^p\,\mathrm{d}x\Big)^{1/p}.
$$The following sufficient condition for the boundedness of the paraproduct $\Pi_b$ associated with an operator-valued function $b$ was proven by Hyt\"onen \cite{hytonen2008} by using interpolation and decoupling of martingale differences. Predecessors of this operator-valued result (under stronger assumptions) were obtained by Hyt\"onen and Weis \cite{hytonen2006a}, based on unpublished ideas of Bourgain recorded by Figiel and Wojtaszczyk \cite{figiel2001} in the case of a scalar-valued function $b$.
\begin{theorem}[Sufficient conditions for the boundedness of a paraproduct]\label{sufficientcondition}Let $\vs$ be a UMD space. Let $\ct \subseteq \cl(\vs)$ be a UMD subspace of $\cl(\vs)$. Then
$$
\norm{\Pi_bf}_{L^p(\br^d;E)}\leq 6\cdot 2^{d} pp'\beta_p(E)^2 \beta_p(\ct) \norm{b}_{\text{BMO}_p(\br^d;\ct)} \norm{f}_{L^p(\br^d;E)}
$$
for all $b\in \text{BMO}_p(\br^d;\ct)$ and $f\in \lpvv$.
\end{theorem}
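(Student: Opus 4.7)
I plan to reduce the paraproduct to an $E$-valued dyadic martingale difference sum, randomize via the UMD property of $E$, apply the decoupling inequality of this paper to pass to a tangent (conditionally independent) sequence, and then exploit the UMD of $\ct$ together with Doob's maximal inequality to bring in the norms $\|b\|_{\text{BMO}_p(\br^d;\ct)}$ and $\|f\|_{\lpvv}$.

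Writing $d_k:=\mathbb{E}_{k+1}-\mathbb{E}_k$ for the scale-$k$ dyadic martingale difference and noting that $D_Qb\cdot\langle f\rangle_Q=(d_kb)(\mathbb{E}_kf)1_Q$ for $\ell(Q)=2^{-k}$, one has
\begin{equation*}
\Pi_bf=\sum_{k\in\mathbb{Z}}(d_kb)(\mathbb{E}_kf),
\end{equation*}
the product interpreted through the action $\ct\subseteq\cl(E)$ on $E$. Since $\mathbb{E}_kf$ is $\mathcal{F}_k$-measurable and $d_kb$ has vanishing $\mathcal{F}_k$-conditional mean, each summand is an $E$-valued martingale difference, and UMD of $E$ yields
\begin{equation*}
\|\Pi_bf\|_{\lpvv}\leq\beta_p(E)\Bigl(\mathbb{E}_\epsilon\bigl\|\sum_k\epsilon_k(d_kb)(\mathbb{E}_kf)\bigr\|_{\lpvv}^p\Bigr)^{1/p}.
\end{equation*}

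Next, I would apply the decoupling inequality proved earlier in the paper to compare the randomized sum with a tangent sum $\sum_k\tilde d_kb\cdot\mathbb{E}_kf$, in which the $\tilde d_kb$ are independent copies of $d_kb$ conditional on the parent $\sigma$-algebras. Because the multipliers $\mathbb{E}_kf$ are measurable with respect to the original filtration and independent of the randomness producing the tangent copies, after conditioning on the $\sigma$-algebra generated by $f$ the expression becomes a sum of \emph{independent} $E$-valued random variables. The UMD of $\ct$ (equivalently, a Kahane-type inequality for independent $\ct$-valued means) then controls this sum by $\beta_p(\ct)\,\|b\|_{\text{BMO}_p(\br^d;\ct)}$ times the $L^p$-norm of the dyadic maximal function $M_df$; the bound on each tangent difference comes directly from the defining inequality of $\text{BMO}_p$ on its supporting dyadic cube, while summation over the $2^d-1$ nontrivial Haar directions within each scale produces the factor $2^d$. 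A final Doob estimate, using only the pointwise bound $|\mathbb{E}_kf|_E\leq\mathbb{E}_k|f|_E$ and the scalar maximal inequality, yields $\|M_df\|_{\lpvv}\leq p'\|f\|_{\lpvv}$.

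The main obstacle will be arranging the decoupling so that the \emph{operator-valued} $\text{BMO}_p(\ct)$-norm, rather than a strictly larger quantity, controls the independent tangent differences; this requires the tangent construction to respect the $\ct$-valued martingale structure of $b$ while being compatible with the $E$-valued multipliers $\mathbb{E}_kf$. Once this is done, tracking the explicit constant $6\cdot 2^dpp'\beta_p(E)^2\beta_p(\ct)$ through the two UMD applications, decoupling, and Doob is careful bookkeeping---the second factor of $\beta_p(E)$ most plausibly arises from the constant of the decoupling inequality itself (which is proved using UMD of $E$), and the remaining factor $p$ from a Stein-type or Khintchine--Kahane estimate invoked when consolidating the randomized sum.
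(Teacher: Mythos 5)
Your proposal takes a genuinely different route from the paper's, but it has a gap at the crucial step.

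The paper's proof does \emph{not} attempt a single global decoupling argument. Instead it introduces a sparse stopping-time collection $\cs$ (chosen by the condition $\langle\norme{f}\rangle_{S'}>2\langle\norme{f}\rangle_S$), rearranges the paraproduct as $\sum_{S\in\cs}\sum_{Q:\pi_\cs(Q)=S}D_Qb\,\langle f\rangle_Q$, applies the sparse variant of Pythagoras' theorem (Lemma~\ref{pythagorassparse}), and reduces to a \emph{local} estimate on a single cube $S$ in which $f$ can be replaced by a function $f_S$ with $\norm{f_S}_{L^\infty}\le 2\cdot 2^d\langle\norme{f}\rangle_S$. Only then does decoupling (or, alternatively, the Stein/Kahane argument of Lemma~\ref{linfinityestimate}) enter, and it enters on a fixed cube, where the final step is simply $\norm{1_S(b-\langle b\rangle_S)}_{L^p(\ct)}\le\norm{b}_{\text{BMO}_p(\ct)}\abs{S}^{1/p}$. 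The factors $p$ and $p'$ come from the Pythagoras lemma ($3p$) and the Carleson embedding ($2p'$), not from a Doob or Khintchine--Kahane step.

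The gap in your proposal is the claim that, after decoupling and conditioning on the $\sigma$-algebra of $x$, the tangent sum $\sum_K\epsilon_K1_K(x)D_Kb(y_K)\langle f\rangle_K$ is controlled by $\beta_p(\ct)\,\norm{b}_{\text{BMO}_p(\ct)}\,\norm{M_df}_{\lpvv}$. The defining inequality of $\text{BMO}_p$ bounds the oscillation of $b$ over a single fixed dyadic cube; it does not directly control the sum of martingale differences $\{D_Kb(y_K):K\ni x\}$ taken over \emph{all} scales at a given $x$. For instance with $b(x)=\log\abs{x}$ on $\br$, the family $\{D_Kb:K\ni 0\}$ consists of pieces of roughly unit size at every scale, so $\sum_{K\ni x}\norm{D_Kb}_{L^p(\nu_K)}^p$ diverges and the BMO norm offers no control without some additional mechanism. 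That mechanism is precisely what the sparse decomposition provides in the paper (it localizes $f$ to a bounded piece on a cube $S$ so that the BMO estimate on $S$ applies), and what interpolation supplies in Hyt\"onen's earlier proof \cite{hytonen2008}. Your proposed shortcut — independence of the tangent differences plus a Kahane-type inequality plus Doob — does not replace it, and I don't see how to make that step rigorous without reintroducing one of those two ingredients.
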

In this paper we give a different proof of Theorem \ref{sufficientcondition}. This proof is elementary in that neither interpolation nor decoupling of martingale differences is used.

By combining Theorem \ref{dyadicshiftsarebounded}, Theorem \ref{dyadicrepresentation}, and Theorem \ref{sufficientcondition},
we obtain a new proof for the following corollary, which is a special case of Hyt\"onen's vector-valued, non-homogeneous, global $Tb$ theorem \cite[Tb theorem 4]{hytonen2008}. Earlier results of this type include the first vector-valued $T1$ theorem by Figiel \cite{figiel1990a}, and the first operator-valued $T1$ theorem by Hyt\"onen and Weis \cite{hytonen2006a}. Several related results have appeared in the literature.
\begin{corollary}[$T1$ theorem for operator-valued kernels] Let $T$ be a singular integral operator that satisfies the Rademacher weak boundedness property and whose operator-valued kernel satisfies the Rademacher standard estimates. Assume that $T1\in BMO_p(\br^d;\ct)$ and $T^*1 \in BMO_p(\br^d;\ct^*)$ for some UMD subspaces $\ct \subset \cl(E)$ and $\ct^*\subset \cl(E^*)$. Then
\begin{equation*}
\begin{split}
&\norm{T}_{L^p(\br^d;E)\to L^p(\br^d;E)} \\
&\lesssim_{\ct,d,p,\alpha} (\mathcal{R}_{CZ_0} +\mathcal{R}_{CZ_\alpha}+\mathcal{R}_\text{WBP}+\norm{T1}_{BMO_p(\br^d;\ct)}+\norm{T^*1}_{BMO_p(\br^d;\ct^*)})\beta_p(E)^2 .
\end{split}
\end{equation*}
\end{corollary}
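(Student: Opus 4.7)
The plan is to combine the three main theorems of the paper: apply the dyadic representation theorem (Theorem \ref{dyadicrepresentation}) to expand $T$ as a random average of dyadic shifts and paraproducts, then bound each shift via Theorem \ref{dyadicshiftsarebounded}, the paraproducts via Theorem \ref{sufficientcondition}, and sum. The one preliminary subtlety is that Theorem \ref{dyadicrepresentation} assumes that $T$ is already bounded on $\lpvv$, which is precisely what we want to conclude. I would resolve this by applying the argument to truncated or mollified versions $T_\eta$ whose kernels remain smooth (so $T_\eta$ is trivially $\lpvv$-bounded) while preserving the Calder\'on--Zygmund, weak boundedness, and $\BMO$ constants uniformly in $\eta$, carrying through the rest of the argument for each $T_\eta$, and passing to the limit $\eta \to 0$. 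I expect this approximation step to be the main technical nuisance; everything else is algebraic.

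Assuming a priori boundedness, Theorem \ref{dyadicrepresentation} yields, for test functions $f$ and $g$, the identity
\begin{equation*}
\begin{split}
\langle g, Tf \rangle = \be_\omega \Bigg( &C_T \sum_{i,j \geq 0} 2^{1/\epsilon}\, 2^{-(1-\epsilon)\alpha \max\{i,j\}} \langle g, S^{ij}_{\cd^\omega} f \rangle \\
&+ \langle g, (\Pi^{\cd^\omega}_{T1} + (\Pi^{\cd^\omega}_{T^*1})^*) f \rangle \Bigg),
\end{split}
\end{equation*}
with $C_T \lesssim_{d,\alpha} \mathcal{R}_{CZ_0} + \mathcal{R}_{CZ_\alpha} + \mathcal{R}_{\text{WBP}}$. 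For each shift, Theorem \ref{dyadicshiftsarebounded} gives $|\langle g, S^{ij}_{\cd^\omega} f \rangle| \leq 4(\max\{i,j\}+1)\mathcal{R}_p(\{a\})\beta_p(E)^2 \|f\|_{\lpvv}\|g\|_{\lpdual}$, and the $R$-bound of the kernels of the shifts produced by the representation is absorbed into universal constants, since these kernels are constructed from the Calder\'on--Zygmund kernel in a manner that preserves Rademacher bounds (so this $\mathcal{R}_p(\{a\})$ factor is just part of $C_T$). For the paraproduct terms, Theorem \ref{sufficientcondition} bounds the direct paraproduct by a constant multiple of $\|T1\|_{\BMO_p(\br^d;\ct)}\|f\|_{\lpvv}\|g\|_{\lpdual}$; the adjoint paraproduct $(\Pi^{\cd^\omega}_{T^*1})^*$ is handled by dualizing and applying the same theorem to $\Pi^{\cd^\omega}_{T^*1}$ acting on $L^{p'}(\br^d;E^*)$, using that $E^*$ is a UMD space and $\ct^* \subset \cl(E^*)$ is a UMD subspace by hypothesis.

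Finally, fix $\epsilon$ (say $\epsilon = 1/2$). The geometric decay $2^{-(1-\epsilon)\alpha \max\{i,j\}}$ dominates the linear factor $\max\{i,j\}+1$ arising from the shift bound, so the double sum
\begin{equation*}
\sum_{i,j \geq 0} 2^{-(1-\epsilon)\alpha\max\{i,j\}}(\max\{i,j\}+1)
\end{equation*}
converges to a constant depending only on $\alpha$. Assembling the shift bounds, the paraproduct bounds, and this summation, then taking the supremum over $g$ in the unit ball of $\lpdual$ and using density of the test functions, produces the claimed estimate on $\|T\|_{\lpvv \to \lpvv}$ with the stated dependence on $\mathcal{R}_{CZ_0}$, $\mathcal{R}_{CZ_\alpha}$, $\mathcal{R}_{\text{WBP}}$, $\|T1\|_{\BMO_p(\br^d;\ct)}$, $\|T^*1\|_{\BMO_p(\br^d;\ct^*)}$, and the factor $\beta_p(E)^2$ supplied by Theorem \ref{dyadicshiftsarebounded}. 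The only step with genuine content beyond invoking the three theorems is the approximation argument needed to legitimize the use of Theorem \ref{dyadicrepresentation}.
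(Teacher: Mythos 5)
Your proposal is exactly the paper's argument: the paper offers no separate proof of the corollary beyond the sentence ``By combining Theorem \ref{dyadicshiftsarebounded}, Theorem \ref{dyadicrepresentation}, and Theorem \ref{sufficientcondition}, we obtain a new proof for the following corollary,'' and your assembly of the three theorems, with the geometric-in-$\max\{i,j\}$ decay dominating the linear complexity factor and the adjoint paraproduct handled by duality, is the intended reasoning. You are right to flag the circularity in invoking Theorem \ref{dyadicrepresentation} (which presupposes $\lpvv$-boundedness of $T$); the paper does not address this point, and a truncation/mollification argument of the kind you sketch is the standard way to close that gap.
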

Here the condition $T^*1 \in BMO_p(\br^d;\ct^*)$ is interpreted via duality as follows: There exists $b\in BMO_p(\br^d;\ct^*)$ such that $\left(\int_{\br^d} \, T(\,\cdot\, h_I)(x) \mathrm{d}x \right)^*=\int_{\br^d} b(x) h_I(x) \mathrm{d}x$. This interpretation originates from extracting the paraproducts as in the equation \eqref{t1interpretation} in Section \ref{section_representation}.

Finally, let us compare our results with Pott and Stoica's results \cite{pott2014}. They study the question how the operator norm of a general vector-valued Calder\'on--Zygmund operator depends on the UMD constant. The purpose of their paper is to prove that this dependence is linear for a large class of Calder\'on--Zygmund operators. They prove the following estimate for vector-valued dyadic shifts:
\begin{theorem}[Self-adjoint vector-valued dyadic shifts depend linearly on the UMD constant \cite{pott2014}]\label{dyadicshiftsareboundedpott}Let $1<p<\infty$. Let $\vs$ be a UMD space. Let $S^{ij}$ be a self-adjoint dyadic shift with parameters $i$ and $j$. Then
$$
\lVert S^{ij}f \rVert_{L^p(\br;E)\to L^p(\br;E)}\lesssim (\max\{i,j\}+1)2^{\max\{i,j\}/2} \beta_p(E) \lVert f \rVert_{\lpvv}
$$
for all $f\in \lpvv$.
\end{theorem}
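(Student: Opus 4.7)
The plan is to modify the proof of Theorem~\ref{dyadicshiftsarebounded} so as to invoke the UMD property of $E$ only once rather than twice, at the price of an additional factor $2^{\max\{i,j\}/2}$ that reflects the number of descendants of each dyadic cube at the shifted scale. Self-adjointness is the hypothesis that permits this trade-off: the usual proof applies UMD independently to the two families of Haar projections $\{D^i_K\}_{K\in\cd}$ and $\{D^j_K\}_{K\in\cd}$, yielding $\beta_p(E)^2$, whereas the symmetry of the Haar matrix coefficients of $S^{ij}$ allows one of the two randomizations to be replaced by a direct Cauchy--Schwarz.

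\textbf{Step 1 (bilinear expansion).} I would expand $\langle g, S^{ij}f\rangle$ in the Haar basis to obtain
$$
\langle g, S^{ij}f\rangle = \sum_{K\in\cd}\sum_{\substack{I\subseteq K\\ \ell(I)=2^{-i}\ell(K)}}\sum_{\substack{J\subseteq K\\ \ell(J)=2^{-j}\ell(K)}}\bigl\langle\langle f,h_I\rangle,\, c^K_{I,J}\langle g,h_J\rangle\bigr\rangle_{E,E^*},
$$
where $c^K_{I,J}\in\cl(E)$ encode the action of $A_K$ on Haar-localized vectors. Self-adjointness of $S^{ij}$ translates to the symmetry $c^K_{I,J}=(c^K_{J,I})^*$, which will be the pivotal algebraic input later.

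\textbf{Step 2 (one UMD application).} Introducing independent Rademacher variables $(\epsilon_K)_{K\in\cd}$ and using the UMD property of $E$ on the $f$-side, one obtains
$$
\bigl\|S^{ij}f\bigr\|_{L^p(\rd;E)}\le \beta_p(E)\,\Bigl(\be\Bigl\|\sum_K\epsilon_K D^j_K A_K D^i_K f\Bigr\|_{L^p(\rd;E)}^p\Bigr)^{1/p},
$$
contributing a single factor of $\beta_p(E)$. The $R$-boundedness of $\{a_K\}$ enters through $\mathcal{R}_p(\{a\})$ when dealing with the action of $A_K$ on the randomized sum.

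\textbf{Step 3 (self-adjoint Cauchy--Schwarz).} The remaining randomized expression is paired against $g$; the symmetry $c^K_{I,J}=(c^K_{J,I})^*$ lets one rewrite the pairing so that $f$ and $g$ enter symmetrically. A Cauchy--Schwarz over the at most $2^{\max\{i,j\}}$ dyadic descendants of $K$ at the shifted scales, replacing the scalar Haar orthogonality that is no longer available, produces the factor $2^{\max\{i,j\}/2}$. Crucially, no second $\beta_p(E)$ is picked up in this step: that is precisely what self-adjointness buys. The factor $\max\{i,j\}+1$ arises, exactly as in the proof of Theorem~\ref{dyadicshiftsarebounded}, from summing over the $\max\{i,j\}+1$ diagonal bands in the Haar matrix of $S^{ij}$.

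The hard part is Step~3: one must engineer the Cauchy--Schwarz so that self-adjointness couples the two Haar-projection families in such a way that a single randomization of $f$ is enough to control the bilinear form. Doing this cleanly is what forces the $2^{\max\{i,j\}/2}$ loss but saves a full factor of $\beta_p(E)$, and it is where the method genuinely deviates from the proof of Theorem~\ref{dyadicshiftsarebounded}; without self-adjointness, one really does seem to need the second $\beta_p(E)$.
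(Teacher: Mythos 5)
The theorem you are trying to prove is not proved in this paper at all: it is quoted from Pott and Stoica \cite{pott2014}, whose proof uses the Bellman function method, and the authors explicitly state in the introduction that ``we prove our estimate for dyadic shifts by using a martingale decoupling equality, whereas Pott and Stoica prove theirs by using the Bellman function method. At the moment, we do not know how to reproduce their result by our method nor our result by their method.'' Your proposal is precisely an attempt to reproduce Pott--Stoica by a modification of the decoupling proof of Theorem~\ref{dyadicshiftsarebounded}, i.e.\ to do what the authors of the paper themselves report not knowing how to do. That by itself does not make you wrong, but it should raise your level of skepticism about your own argument.

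And indeed the argument has a genuine gap at Step~3, which you yourself flag as ``the hard part'' that ``one must engineer.'' Nothing in the proposal actually establishes that self-adjointness (i.e.\ $c^K_{I,J}=(c^K_{J,I})^*$) lets you dispense with the second application of the UMD property. After Step~2 you have a randomized sum $\Exp\|\sum_K \epsilon_K D^j_K A_K D^i_K f\|^p$ and no tool yet to control the shifted Haar projections $D^i_K$ and $D^j_K$ inside it; in the paper's proof these are controlled by an application of the decoupling equality \emph{together with} the contractivity of conditional expectations (Lemma~\ref{independentfunctions}), and it is precisely the second decoupling step that produces the second $\beta_p(E)$. Claiming that a ``Cauchy--Schwarz over the at most $2^{\max\{i,j\}}$ dyadic descendants'' replaces this step is a wish, not a computation: in a general UMD space there is no inner-product structure with which a symmetric operator matrix $(c^K_{I,J})$ can be estimated by its diagonal, and a naive $\ell^1$--$\ell^2$ Cauchy--Schwarz on the coefficients does not interact with the $L^p(E)$ norm in the way the estimate requires. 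Until the trade ``one $\beta_p(E)$ saved, one factor $2^{\max\{i,j\}/2}$ paid'' is carried out with actual inequalities, you do not have a proof. The correct route to this theorem, per the citation, is the Bellman function argument of Pott and Stoica, which is structurally unrelated to the decoupling proof you are modifying.
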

By the fact that an estimate for dyadic shifts can be transferred to an estimate for Calder\'on--Zygmund operators via the dyadic representation theorem (Theorem \ref{dyadicrepresentation}), their estimate for dyadic shifts then transfers to the following estimate for vector-valued Calder\'on--Zygmund operators:
\begin{theorem}[Calder\'on--Zygmund operators that have even kernel with sufficiently smoothness,  and vanishing paraproduct depend linearly on the UMD constant \cite{pott2014}]\label{linearumdpott}Let $1<p<\infty$. Let $E$ be a UMD space. Let $T$ be a singular integral operator that satisfies the weak boundedness property and whose kernel satisfies the standard estimates with the H\"older-exponent $\alpha$. Assume that the kernel is even and has smoothness $\alpha>1/2$. Assume that $T$ satisfies the vanishing paraproduct condition $T(1)=T^*(1)=0$. Then
$$
 \norm{T}_{L^p(\br;E)\to L^p(\br;E)} \lesssim_{\alpha,d}  C_T \beta_p(E),
$$ 
where $C_T$ depends only on the constants in the standard estimates and the weak boundedness property.
\end{theorem}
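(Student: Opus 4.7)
The plan is to combine the dyadic representation theorem (Theorem \ref{dyadicrepresentation}) with the Pott--Stoica bound for self-adjoint dyadic shifts (Theorem \ref{dyadicshiftsareboundedpott}), using the vanishing paraproduct and evenness hypotheses to reduce to exactly that situation. Observe first that the scalar/vector-valued setting here requires only the classical (non-Rademacher) standard estimates, but since Rademacher $R$-boundedness reduces to uniform boundedness for singletons of scalars, both Theorem \ref{dyadicrepresentation} and its hypotheses apply.

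First, I apply Theorem \ref{dyadicrepresentation} to $T$. The hypothesis $T(1)=T^*(1)=0$ makes both paraproducts $\Pi^{\mathcal{D}^\omega}_{T1}$ and $(\Pi^{\mathcal{D}^\omega}_{T^*1})^*$ vanish, leaving only the series of shifts:
$$\langle g,Tf\rangle = \mathbb{E}_\omega \Big( C_T \sum_{i,j\geq 0} 2^{1/\epsilon}\,2^{-(1-\epsilon)\alpha\max\{i,j\}}\langle g, S^{ij}_{\mathcal{D}^\omega}f\rangle\Big),$$
with $C_T\lesssim_{d,\alpha} \mathcal{R}_{CZ_0}+\mathcal{R}_{CZ_\alpha}+\mathcal{R}_{\text{WBP}}$, which is absorbed into the $C_T$ of the statement.

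Second, I use the evenness of the kernel to symmetrize. A symmetric kernel $k(x,y)=k(y,x)$ gives $\langle Th_I,h_J\rangle=\langle h_I,Th_J\rangle$, from which one verifies that the shifts arising in the construction satisfy $(S^{ij}_{\mathcal{D}^\omega})^*=S^{ji}_{\mathcal{D}^\omega}$. Since the coefficients $2^{-(1-\epsilon)\alpha\max\{i,j\}}$ depend only on $\max\{i,j\}$, I may pair the $(i,j)$ and $(j,i)$ contributions and reorganize the series into the self-adjoint shifts
$$\widetilde{S}^{ij}_{\mathcal{D}^\omega}:=\tfrac{1}{2}\bigl(S^{ij}_{\mathcal{D}^\omega}+(S^{ij}_{\mathcal{D}^\omega})^*\bigr),\qquad i\leq j,$$
each of complexity $\max\{i,j\}+1$. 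Now Theorem \ref{dyadicshiftsareboundedpott} applies directly and yields
$$\|\widetilde{S}^{ij}_{\mathcal{D}^\omega}\|_{L^p(\br;E)\to L^p(\br;E)}\lesssim (\max\{i,j\}+1)\,2^{\max\{i,j\}/2}\,\beta_p(E).$$

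Third, I sum over $(i,j)$ by grouping according to $m=\max\{i,j\}$, which accounts for $2m+1$ pairs:
$$\|T\|_{L^p(\br;E)\to L^p(\br;E)} \lesssim C_T\,\beta_p(E)\,2^{1/\epsilon}\sum_{m\geq 0}(2m+1)(m+1)\,2^{m/2-(1-\epsilon)\alpha m}.$$
This geometric series converges exactly when $(1-\epsilon)\alpha>1/2$, which can be arranged for some $\epsilon\in(0,1)$ precisely under the smoothness hypothesis $\alpha>1/2$, giving the asserted linear dependence on $\beta_p(E)$. The main obstacle is the symmetrization step: one must trace through the proof of Theorem \ref{dyadicrepresentation} and verify that the shifts assembled from the Haar matrix $\langle Th_I,h_J\rangle$ really do transform as $(S^{ij}_{\mathcal{D}^\omega})^*=S^{ji}_{\mathcal{D}^\omega}$ under $k(x,y)=k(y,x)$, and that the averaging over $\omega$ respects this symmetry. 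Once this bookkeeping is in place, the remaining ingredients (eliminating the paraproducts and summing the geometric series) are immediate.
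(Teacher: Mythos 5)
The paper does not contain its own detailed proof of this theorem: it attributes the result to Pott and Stoica~\cite{pott2014} and notes only that their shift estimate ``transfers'' via the dyadic representation theorem. Your proposal is the transfer the paper alludes to, filled in explicitly: kill the paraproducts via $T1=T^*1=0$, symmetrize the shift series using the evenness of the kernel, invoke the self-adjoint shift bound, and sum the resulting geometric series, which converges precisely when $(1-\epsilon)\alpha>1/2$ can be arranged, i.e.\ when $\alpha>1/2$. This is the intended argument, and the step you flag as ``the main obstacle'' does in fact go through: if $\langle h_J,Th_I\rangle=\langle h_I,Th_J\rangle$, then swapping the labels $I\leftrightarrow J$ in the kernel $a^{ij}_K$ carries the goodness constraint ``smaller$\{I,J\}$ is good'' to itself and produces exactly $a^{ji}_K(x,x')$, so $(S^{ij}_{\mathcal{D}^\omega})^*=S^{ji}_{\mathcal{D}^\omega}$ for every fixed $\omega$, and the coefficients $2^{-(1-\epsilon)\alpha\max\{i,j\}}$ are symmetric in $(i,j)$, so the pairing you describe is legitimate.
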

Now, let us compare our estimate for dyadic shifts (Theorem \ref{dyadicshiftsarebounded}) with Pott and Stoica's estimate (Theorem \ref{dyadicshiftsareboundedpott}). We note that the dependence on the complexity dictates whether the series in the dyadic representation theorem (Theorem \ref{dyadicrepresentation}) converges. On the one hand, our estimate depends linearly on the complexity, whereas theirs exponentially, which then translates into the smoothness condition $\alpha>1/2$ in their estimate for Calder\'on--Zygmund operators (Theorem \ref{linearumdpott}). On the other hand, their estimate depends linearly on the UMD constant, whereas ours depends quadratically. We remark that by interpolating between our estimate and theirs (by multiplying the inequalities $\norm{S}_{L^p(\br;E)\to L^p(\br;E)}^{1-\theta} \lesssim k^{1-\theta} \beta_p(E)^{2(1-\theta)}$ and $\norm{S}_{L^p(\br;E)\to L^p(\br;E)}^{\theta}\lesssim 2^{\theta k/2}\beta_p(E)^\theta$), we obtain that
$$
\norm{S}_{L^p(\br;E)\to L^p(\br;E)}\lesssim \beta_p(E)^{2-\theta} k^{1-\theta}2^{\theta k/2},
$$
which then transfers to:
\begin{corollary}[Calder\'on--Zygmund operators that have even kernel and vanishing paraproduct depend subquadratically on the UMD constant]Let $1<p<\infty$. Let $E$ be a UMD space. Let $T$ be a singular integral operator that satisfies the weak boundedness property and whose kernel satisfies the standard estimates with the H\"older-exponent $\alpha$. Assume that the kernel is even. Assume that $T$ satisfies the vanishing paraproduct condition $T(1)=T^*(1)=0$. Then
$$
 \norm{T}_{L^p(\br;E)\to L^p(\br;E)} \lesssim_{\alpha,d}  C_T \left\{
     \begin{array}{lr}
      \frac{1}{(\alpha-\theta)^c}\beta_p(E)^{2(1-\theta)}  & \text{ for } \alpha\leq 1/2,\\
       \beta_p(E) & \text{ for } \alpha> 1/2,
     \end{array}
   \right.
$$ 
for every $\theta$ with $0<\theta<\alpha$. Here $C_T$ depends only on the constants in the standard estimates and the weak boundedness property.
\end{corollary}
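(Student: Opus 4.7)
The plan is to chain together three ingredients from the excerpt: the dyadic representation theorem (Theorem \ref{dyadicrepresentation}), our linear-in-complexity shift bound (Theorem \ref{dyadicshiftsarebounded}), and Pott--Stoica's exponential-in-complexity bound for self-adjoint shifts (Theorem \ref{dyadicshiftsareboundedpott}). The vanishing paraproduct hypothesis $T1=T^*1=0$ removes the paraproducts from the representation, while the even-kernel hypothesis $T=T^*$ is what allows Pott--Stoica's self-adjointness convention to be met.

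First I would invoke Theorem \ref{dyadicrepresentation}: the paraproducts vanish, so
$$\langle g,Tf\rangle=\Exp_\omega\, C_T\sum_{i,j\geq 0}2^{1/\epsilon}2^{-(1-\epsilon)\alpha\max\{i,j\}}\langle g,S^{ij}_{\cd^\omega}f\rangle.$$
The adjoint of an $(i,j)$-shift is (after replacing its operator-valued kernels by their pointwise adjoints) a $(j,i)$-shift of the same complexity. Because $T=T^*$, one may symmetrize the representation by writing $T=\tfrac12(T+T^*)$ and pair up $(i,j)$- and $(j,i)$-shifts into self-adjoint combinations to which Theorem \ref{dyadicshiftsareboundedpott} applies.

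Next I would interpolate the two shift bounds geometrically. Writing $k:=\max\{i,j\}+1$, Theorem \ref{dyadicshiftsarebounded} gives $\norm{S^{ij}}\lesssim k\beta_p(E)^2$ while Theorem \ref{dyadicshiftsareboundedpott} gives $\norm{S^{ij}}\lesssim k\, 2^{k/2}\beta_p(E)$. A weighted geometric mean then yields, for every $t\in[0,1]$,
$$\norm{S^{ij}}_{L^p(\R;E)\to L^p(\R;E)}\lesssim k\cdot 2^{tk/2}\beta_p(E)^{2-t}.$$
Substituting into the representation, taking the triangle inequality, and collecting the $2n+1$ pairs $(i,j)$ with $\max\{i,j\}=n$ gives
$$\norm{T}_{L^p(\R;E)\to L^p(\R;E)}\lesssim C_T\,2^{1/\epsilon}\beta_p(E)^{2-t}\sum_{n\geq 0}(n+1)^2\,2^{[t/2-(1-\epsilon)\alpha]n},$$
which is finite precisely when $t<2(1-\epsilon)\alpha$.

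It remains to optimize in $t$ and $\epsilon$. For $\alpha>1/2$ take $t=1$ and a fixed $\epsilon<1-1/(2\alpha)$; the exponent on $\beta_p(E)$ becomes $2-t=1$, giving the clean bound $\norm{T}\lesssim C_T\beta_p(E)$. For $\alpha\leq 1/2$ it is more convenient to use the $\epsilon$-free version of the representation noted in the remark after Theorem \ref{dyadicrepresentation}, which replaces $2^{1/\epsilon}2^{-(1-\epsilon)\alpha k}$ by $k^{\gamma(d+\alpha)}2^{-\alpha k}$: setting $t=2\theta$ for $0<\theta<\alpha$, the geometric sum converges and blows up polynomially at rate $(\alpha-\theta)^{-c}$ as $\theta\nearrow\alpha$, while the exponent on $\beta_p(E)$ is $2-t=2(1-\theta)$, matching the stated bound. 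The main obstacle, in my view, is the symmetrization step in the first paragraph: one must genuinely verify that the even-kernel hypothesis allows the representation to be rearranged so that only shifts satisfying Pott--Stoica's self-adjointness hypothesis appear, with the same complexity weights. The remaining steps are routine manipulations of geometric series.
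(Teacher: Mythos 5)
Your proposal follows essentially the same route as the paper, which proves this corollary via the interpolation sketched in the paragraph preceding it: multiply $\norm{S}^{1-\theta}\lesssim k^{1-\theta}\beta_p(E)^{2(1-\theta)}$ and $\norm{S}^{\theta}\lesssim 2^{\theta k/2}\beta_p(E)^{\theta}$ to get $\norm{S}\lesssim \beta_p(E)^{2-\theta}k^{1-\theta}2^{\theta k/2}$, then substitute into the representation theorem with vanishing paraproducts and sum. Your version keeps track of the polynomial-in-$k$ factors a bit more carefully (the paper silently drops the $k^\theta$ from the Pott--Stoica side, harmlessly) and arrives at the same exponent $2(1-\theta)$ after the substitution $t=2\theta$; you also correctly flag that the symmetrization step---pairing $S^{ij}$ with $(S^{ij})^*$ (a $(j,i)$-shift with adjoint kernels) so that Pott--Stoica's self-adjointness hypothesis is met under the even-kernel assumption---needs to be verified, a point the paper also leaves implicit.
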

Lastly, we remark that we prove our estimate for dyadic shifts by using a martingale decoupling equality, whereas Pott and Stoica prove theirs by using the Bellman function method. At the moment, we do not know how to reproduce their result by our method nor our result by their method. A more complete understanding of both methods could yield interesting further results.

\section{Preliminaries}\label{section_tools}

\subsection{Sum of stochastically independent conditional expectations}
\begin{lemma}[Sum of stochastically independent conditional expectations]\label{independentfunctions}Let $(X_n,\cf_n,\mu_n)$ be a probability space for each $n=1,\ldots,N$. Let $(X,\cf,\mu)$ denote the product probability space $(\Pi_{n=1}^N X_n,\bigtimes_{n=1}^N \cf_n,\bigtimes_{n=1}^N \mu_n)$. Let $1\leq p\leq \infty$. Assume that $f_n\in L^p(X_n,\cf_n,\mu_n;E)$ and that $\cg_n$ is a sub-$\sigma$-algebra of $\cf_n$ for each $n=1,\ldots,N$.   Then
$$
\norm{\sum_{n=1}^N \be [f_n \lvert \cg_n]}_{L^p(X,\cf,\mu;E)}\leq  \norm{\sum_{n=1}^N f_n }_{L^p(X,\cf,\mu;E)}.
$$
\end{lemma}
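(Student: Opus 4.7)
The plan is to reinterpret the sum of conditional expectations $\sum_{n=1}^N \be[f_n\mid\cg_n]$ as a single conditional expectation of $\sum_{n=1}^N f_n$ with respect to a suitable product sub-$\sigma$-algebra of $\cf$, and then apply the contractivity of conditional expectation on $L^p(X,\cf,\mu;E)$.

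Concretely, I would introduce $\cg:=\bigotimes_{n=1}^N \cg_n$ as a sub-$\sigma$-algebra of $\cf=\bigotimes_{n=1}^N \cf_n$. For each fixed $n$, I regard $f_n$ as a function on the product $X$ depending only on the $n$-th coordinate, and I regard $\be[f_n\mid\cg_n]$ (computed on $(X_n,\cf_n,\mu_n)$) as a function on $X$ in the same way. The first key step is to verify the identity
\begin{equation*}
\be[f_n\mid\cg]=\be[f_n\mid\cg_n]\qquad\mu\text{-a.e. on }X.
\end{equation*}
The right-hand side is $\cg$-measurable (since $\cg_n$ sits inside $\cg$ as functions of the $n$-th coordinate only), so it suffices to check the defining averaging property on a $\cap$-stable generator of $\cg$, namely the cylinders $A=A_1\times\cdots\times A_N$ with $A_m\in\cg_m$. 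Here Fubini and the fact that $f_n$ depends only on $x_n$ give
\begin{equation*}
\int_A f_n\,d\mu=\Big(\prod_{m\neq n}\mu_m(A_m)\Big)\int_{A_n}f_n\,d\mu_n =\Big(\prod_{m\neq n}\mu_m(A_m)\Big)\int_{A_n}\be[f_n\mid\cg_n]\,d\mu_n,
\end{equation*}
which is exactly $\int_A \be[f_n\mid\cg_n]\,d\mu$. A monotone class argument (applied to each coordinate of the Bochner integral via duality with $E^*$, or equivalently to the scalar-valued functions $\pair{e^*}{f_n}$) then extends the identity to all of $\cg$.

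Summing over $n$ and using linearity of conditional expectation, I obtain
\begin{equation*}
\sum_{n=1}^N\be[f_n\mid\cg_n]=\sum_{n=1}^N\be[f_n\mid\cg]=\be\Big[\sum_{n=1}^N f_n\,\Big|\,\cg\Big].
\end{equation*}
The final step is to invoke the standard contractivity of conditional expectation on the Lebesgue--Bochner space $L^p(X,\cf,\mu;E)$ for $1\leq p\leq\infty$, which yields
\begin{equation*}
\BNorm{\sum_{n=1}^N\be[f_n\mid\cg_n]}{L^p(X,\cf,\mu;E)} =\BNorm{\be\Big[\sum_{n=1}^N f_n\,\Big|\,\cg\Big]}{L^p(X,\cf,\mu;E)} \leq \BNorm{\sum_{n=1}^N f_n}{L^p(X,\cf,\mu;E)},
\end{equation*}
as desired.

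The only subtle point is the first step: the identification $\be[f_n\mid\cg]=\be[f_n\mid\cg_n]$ is really the independence of the $n$-th coordinate from the other coordinates packaged as a statement about conditional expectations, and its verification via cylinder sets is the heart of the lemma. Once this is settled, linearity and contractivity finish the proof without further obstacles.
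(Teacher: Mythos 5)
Your proposal is correct and follows essentially the same route as the paper's own proof: identify $\sum_n \be[f_n\mid\cg_n]$ with $\be[\sum_n f_n\mid\cg]$ for the product $\sigma$-algebra $\cg=\bigotimes_n\cg_n$, verify the key identity on measurable rectangles via Fubini, extend by a monotone class/Dynkin $\pi$--$\lambda$ argument, and finish with $L^p$-contractivity of conditional expectation. The only cosmetic difference is that you phrase the extension step as a monotone class argument applied through duality with $E^*$, while the paper invokes Dynkin's theorem directly on the $\lambda$-system of sets satisfying the averaging identity; these are interchangeable.
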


\begin{proof} 
We prove that $\be[f_n \lvert \cg_n]=\be[f_n \lvert \bigtimes_{m=1}^N\cg_n]$, from which the estimate follows by the linearity and the $L^p$-contractivity of the conditional expectation operator,
$$
\norm{\sum_{n=1}^N \be [f_n \lvert \cg_n]}_{L^p(X,\cf,\mu;E)}=\norm{\be [\sum_{n=1}^N f_n \lvert \bigtimes_{m=1}^N\cg_n]}_{L^p(X,\cf,\mu;E)} \leq  \norm{\sum_{n=1}^N f_n }_{L^p(X,\cf,\mu;E)}.
$$ 
By Kolmogorov's definition of the conditional expectation, we have $\be[f_n \lvert \cg_n]=\be[f_n \lvert \bigtimes_{m=1}^N\cg_n]$ if and only if
\begin{equation}\label{kolmogorovdefinition}
\int_G \be[f_n \lvert \cg_n] \mathrm{d}\mu=\int_G f_n \mathrm{d}\mu
\end{equation}
for all $G\in \bigtimes_{m=1}^N\cg_n$. The collection of sets $G\in \bigtimes_{m=1}^N\cg_n$ satisfying the condition \eqref{kolmogorovdefinition} is a $\lambda$-system (which means that the collection contains the empty set, is closed under taking complements and is closed under taking countable disjoint unions). The $\sigma$-algebra $ \bigtimes_{m=1}^N\cg_n$ is generated by the collection of sets $G_1\times\cdots\times G_N$ with each $G_n\in \cg_n$, which is a $\pi$-system (which means that the collection is closed under taking finite intersections). Dynkin's $\pi-\lambda$ theorem (for a proof, see, for example, the appendix of Durrett's textbook \cite{durrett2010}) states that the $\lambda$-system and the $\sigma$-algebra both generated by the same $\pi$-system coalesce. Hence it suffices to check the condition \eqref{kolmogorovdefinition} for the sets $G_1\times\cdots\times G_N$ with each $G_n\in \cg_n$, which is done by using Fubini's theorem and Kolmogorov's definition of the conditional expectation, 
\begin{equation*}
\begin{split}
\int_{G_1\times\cdots\times G_N} \be[f_n \lvert \cg_n] \mathrm{d}\mu &= \int_{G_n} \be[f_n \lvert \cg_n] \mathrm{d}\mu_n \prod_{m\neq n} \dmu_m(G_m) \\
&= \int_{G_n} f_n  \mathrm{d}\mu_n \prod_{m\neq n} \dmu_m(G_m)= \int_{G_1\times\cdots\times G_N} f_n  \mathrm{d}\mu.
\end{split}
\end{equation*}

\end{proof}
\subsection{Properties of $R$-bounds}In this section we have collected some properties of $R$-bounds. For the proofs and references, see Neerven's lecture notes  \cite{neerven2008}.
 Let $(X,\cf,\mu)$ be a $\sigma$-finite measure space. Let $\vs$ be a Banach space. Assume that $x\mapsto L(x)$ is an $\cl(E)$-valued function defined on $X$ such that the function $x\mapsto L(x)\ve$ defined on $X$ is strongly measurable for each $\ve\in\vs$. We define the operator $\int_{X} L(x)\lambda(x)\mathrm{d}\mu(x):\vs\to\vs$ by
$$
(\int_{X} L(x)\lambda(x)\mathrm{d}\mu(x))\ve:=\int_{X} L(x)\ve\lambda(x)\mathrm{d}\mu(x) \quad\text{ for all } \ve\in\vs.
$$

\begin{proposition}[Averaging preserves $R$-bounds]\label{propertyintegral} Let $(X,\cf,\mu)$ be a $\sigma$-finite measure space. Let $S$ be an index set.
Let $\{L_s\}_{s\in S}$ be an indexed family of $\cl(E)$-valued functions defined on $X$ such that the $E$-valued function $x\mapsto L_s(x)e$ defined on $X$ is strongly $\mu$-measurable for every $e\in E$ and every $s\in S$.  Let $\{\lambda_s\}_{s\in S}$ be an indexed family of integrable real-valued functions. Then
\begin{equation*}
\begin{split}
&\mathcal{R}(\{\int_{X} L_s(x)\lambda_s(x)\mathrm{d}\mu(x)  : s\in S\})\\
&\leq \sup\{ \int_X \lvert \lambda_s(x) \rvert \mathrm{d}\mu(x) : s\in S\} \cdot\mathcal{R}(\{L_s(x) : s\in S \text{ and } x\in X\}).
\end{split}
\end{equation*}
\end{proposition}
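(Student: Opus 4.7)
My plan is to verify the defining inequality of $R$-boundedness directly, using a probabilistic representation of each integrated operator together with Jensen's inequality and Kahane's contraction principle.

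Fix any finite selection $s_1,\ldots,s_N\in S$ and vectors $e_1,\ldots,e_N\in E$, and set $T_n := \int_X L_{s_n}(x)\lambda_{s_n}(x)\,\mathrm{d}\mu(x)$, $M := \sup_s \int_X |\lambda_s|\,\mathrm{d}\mu$, and $\mathcal{R} := \mathcal{R}(\{L_s(x):s\in S,\ x\in X\})$. Discarding any index $n$ with $\Lambda_n := \int_X |\lambda_{s_n}|\,\mathrm{d}\mu = 0$ (for which $T_n = 0$), I define the probability measures $\mathrm{d}\nu_n := \Lambda_n^{-1}|\lambda_{s_n}|\,\mathrm{d}\mu$ on $X$ and, on an auxiliary probability space, introduce independent random variables $\xi_n$ with $\xi_n\sim \nu_n$, taken independent of the Rademacher sequence $(\varepsilon_n)$. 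Writing $h_n(x):=\operatorname{sgn}(\lambda_{s_n}(x))$, one has the identity
$$T_n e_n = \Lambda_n\,\mathbb{E}_\xi\bigl[h_n(\xi_n)\,L_{s_n}(\xi_n)\,e_n\bigr],$$
so that $\sum_n \varepsilon_n T_n e_n = \mathbb{E}_\xi\bigl[\sum_n \varepsilon_n \Lambda_n h_n(\xi_n) L_{s_n}(\xi_n) e_n\bigr]$ by linearity.

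Next I apply Jensen's inequality for the convex function $|\cdot|_E^p$ acting on the $E$-valued Bochner integral $\mathbb{E}_\xi$, and then Fubini to exchange $\mathbb{E}_\varepsilon$ with $\mathbb{E}_\xi$, obtaining
$$\mathbb{E}_\varepsilon\Bigl|\sum_n \varepsilon_n T_n e_n\Bigr|_E^p \leq \mathbb{E}_\xi\,\mathbb{E}_\varepsilon\Bigl|\sum_n \varepsilon_n \Lambda_n h_n(\xi_n) L_{s_n}(\xi_n) e_n\Bigr|_E^p.$$
For every realization of $\xi$, the operators $L_{s_n}(\xi_n)$ belong to the $R$-bounded family, so the defining inequality of $R$-boundedness applied to the vectors $\tilde e_n := \Lambda_n h_n(\xi_n) e_n$ bounds the inner expectation by $\mathcal{R}^p\,\mathbb{E}_\varepsilon|\sum_n \varepsilon_n \tilde e_n|_E^p$. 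Since $|\Lambda_n h_n(\xi_n)|\leq \Lambda_n\leq M$ uniformly in $n$ and $\xi$, Kahane's contraction principle then bounds this further by $\mathcal{R}^p M^p\,\mathbb{E}_\varepsilon|\sum_n \varepsilon_n e_n|_E^p$. The resulting estimate is independent of $\xi$, so integrating over $\xi$ and extracting $p$-th roots delivers the desired $R$-bound with constant $M\mathcal{R}$.

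The main obstacle is choosing the right representation of the integrals: a naive Minkowski estimate of the form $\mathbb{E}_\varepsilon^{1/p}|\int\cdot\,\mathrm{d}\mu|^p \leq \int \mathbb{E}_\varepsilon^{1/p}|\cdot|^p\,\mathrm{d}\mu$ is too lossy, as it converts the $L^p$-norm over $\varepsilon$ into an $L^1$-norm in $x$ and thereby destroys the Rademacher cancellation (simple block examples with disjointly supported $\lambda_{s_n}$ already defeat the claimed bound through this route). Encoding the weights $|\lambda_{s_n}|/\Lambda_n$ as the laws of the auxiliary variables $\xi_n$ preserves the cancellation and lets the $R$-boundedness hypothesis act uniformly in $\xi$; the strong $\mu$-measurability of $x\mapsto L_s(x)e$ and the assumed integrability of $\lambda_s$ ensure that all Bochner integrals and Fubini swaps are legitimate.
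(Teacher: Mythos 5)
Your proof is correct. The paper itself does not supply a proof of this proposition; it only cites van Neerven's lecture notes, so there is no internal argument to compare against. The textbook route (which is what the cited reference uses) is a two-step reduction: first show that the absolutely convex hull of an $R$-bounded set is $R$-bounded with the same constant (for real scalars), then that the closure in the strong operator topology is $R$-bounded with the same constant, and finally write each integral mean $\int L_s \lambda_s\,\mathrm{d}\mu$ as an SOT-limit of absolutely convex combinations via approximation by simple functions. Your probabilistic reformulation compresses all three steps into a single pass: encoding the normalized weights $|\lambda_{s_n}|/\Lambda_n$ as the laws of auxiliary random variables $\xi_n$, pulling the signs $\operatorname{sgn}\lambda_{s_n}(\xi_n)$ and magnitudes $\Lambda_n$ into the vectors, and then invoking Bochner--Jensen followed by the pointwise-in-$\xi$ $R$-bound and the real-scalar Kahane contraction. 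This is cleaner and equally sharp, recovering the constant $M\cdot\mathcal{R}$ with no $\pi/2$ loss since all scalars in play are real. Two minor housekeeping points worth making explicit if you were to write this up formally: (i) after discarding the indices with $\Lambda_n=0$ you must reattach them on the right-hand side, which costs nothing because dropping terms from $\sum_n\varepsilon_n e_n$ can only decrease $\mathbb{E}\lvert\cdot\rvert^p$; and (ii) the joint independence of the $\xi_n$ among themselves is not actually used — only that they live on a common probability space independent of the Rademacher sequence, so that Fubini applies and the $R$-bound can be invoked conditionally on $\xi$. Your closing remark correctly identifies why a crude Minkowski-in-$x$ attack fails: it replaces $\sup_n\int|\lambda_{s_n}|$ by $\int\sup_n|\lambda_{s_n}|$, which is off by a factor of $N$ when the $\lambda_{s_n}$ have disjoint supports.
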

\begin{proposition}[Triangle inequality for $R$-bounds]\label{propertytriangle}Let $S$ and $T$ be index sets. Let $\{L_s\}_{s\in S}$ and $\{M_t\}_{t\in T}$ be indexed families of operators. Then
$$
\mathcal{R}(\{M_s+L_t : s\in S, t\in T\})\leq \mathcal{R}(\{M_s: s\in S\})+\mathcal{R}(\{L_t : t\in T\}).
$$
\end{proposition}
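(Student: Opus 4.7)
The plan is to unfold the definition of $R$-boundedness for the combined family $\{M_s + L_t : s\in S, t\in T\}$ and reduce to the triangle inequality in the $L^p$-norm of Rademacher sums. Fix any finite sequence of index pairs $(s_n,t_n)_{n=1}^N \subseteq S\times T$, vectors $(e_n)_{n=1}^N\subseteq E$, and independent Rademacher signs $(\varepsilon_n)_{n=1}^N$. By linearity,
$$\sum_{n=1}^N \varepsilon_n(M_{s_n}+L_{t_n})e_n = \sum_{n=1}^N \varepsilon_n M_{s_n}e_n + \sum_{n=1}^N \varepsilon_n L_{t_n}e_n.$$

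Taking $(\be\lvert\,\cdot\,\rvert_E^p)^{1/p}$ of both sides and applying the $L^p$-triangle inequality splits the combined Rademacher sum into two pieces, each controlled by the $R$-bound of the corresponding family. Concretely, $R$-boundedness of $\{M_s\}_{s\in S}$, applied with operators $(M_{s_n})_{n=1}^N$, signs $(\varepsilon_n)$, and vectors $(e_n)$, gives
$$\Bigl(\be\bigl\lvert\sum_{n=1}^N \varepsilon_n M_{s_n}e_n\bigr\rvert_E^p\Bigr)^{1/p} \leq \mathcal{R}(\{M_s: s\in S\})\Bigl(\be\bigl\lvert\sum_{n=1}^N \varepsilon_n e_n\bigr\rvert_E^p\Bigr)^{1/p},$$
and the analogous bound, with constant $\mathcal{R}(\{L_t : t\in T\})$, holds for the Rademacher sum indexed by $(L_{t_n})$.

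Adding the two estimates shows that $\mathcal{R}(\{M_s\})+\mathcal{R}(\{L_t\})$ is an admissible constant in the $R$-boundedness inequality for $\{M_s+L_t : s\in S, t\in T\}$, which is the claim. There is no real obstacle here: the result is a direct consequence of linearity combined with the $L^p$-triangle inequality. The only mild subtlety is that the same sign sequence $(\varepsilon_n)$ and the same vectors $(e_n)$ get reused with possibly different index sequences $(s_n)$ and $(t_n)$ on the two halves of the split, which the definition of $R$-boundedness accommodates since the indexing choice is arbitrary in each instance.
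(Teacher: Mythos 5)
Your proof is correct. The paper does not prove this proposition itself (it defers to Neerven's lecture notes for the properties of $R$-bounds); your argument---split the Rademacher sum by linearity, apply the $L^p$-triangle inequality, then invoke the definition of $R$-boundedness separately on each half with the shared sign sequence and vectors---is exactly the standard proof one finds in such references.
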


\begin{proposition}[Vector-valued Stein's inequality]\label{steininequality}Let $E$ be a UMD space. Let $(\Omega,\cf,\bp)$ be a probability space. Let $(\cg_n)_{n=1}^\infty$ be a refining sequence of $\sigma$-algebras. Then the family
$$
\{\be[\,\cdot\, \lvert \cg_n]:L^p(\Omega;E)\to L^p(\Omega;E) \}_{n=1}^\infty
$$
is R-bounded. Moreover,
$$
\mathcal{R}_p\big(\{\be[\,\cdot\, \lvert \cg_n]:L^p(\Omega;E)\to L^p(\Omega;E)  \}_{n=1}^\infty\big)\leq \beta_p(E).
$$
\end{proposition}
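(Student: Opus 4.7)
The plan is to reduce, via Fubini and the definition of $R$-boundedness, to proving that for any finitely many indices $n_1,\ldots,n_K$ (WLOG $n_1\leq n_2\leq\cdots\leq n_K$, using that $(\cg_n)$ is refining) and any $f_1,\ldots,f_K\in L^p(\Omega;E)$,
\begin{equation*}
\BNorm{\sum_{k=1}^K \varepsilon_k\,\be[f_k\mid\cg_{n_k}]}{L^p(\Omega_\varepsilon\times\Omega;E)} \leq \beta_p(E)\,\BNorm{\sum_{k=1}^K \varepsilon_k f_k}{L^p(\Omega_\varepsilon\times\Omega;E)},
\end{equation*}
where $(\varepsilon_k)$ are independent Rademacher variables on an auxiliary probability space $\Omega_\varepsilon$ independent of $\Omega$.

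The key device is a \emph{doubled refining filtration} on $\Omega_\varepsilon\times\Omega$:
\begin{equation*}
\mathcal{I}_{2k-1} := \sigma(\varepsilon_1,\ldots,\varepsilon_{k-1})\otimes\cg_{n_k},\qquad \mathcal{I}_{2k} := \sigma(\varepsilon_1,\ldots,\varepsilon_k)\otimes\cg_{n_k}.
\end{equation*}
Associated with it, consider the $E$-valued martingale $L_m:=\be[V\mid\mathcal{I}_m]$ with terminal candidate $V:=\sum_j \varepsilon_j f_j$ (and $L_0:=0$, consistent with $\be V=0$). Using that each $\varepsilon_j$ is centered and independent of everything else, a straightforward case-by-case conditional expectation yields
\begin{equation*}
L_{2k-1} = \sum_{j<k}\varepsilon_j\,\be[f_j\mid\cg_{n_k}],\qquad L_{2k} = \sum_{j\leq k}\varepsilon_j\,\be[f_j\mid\cg_{n_k}],
\end{equation*}
so the \emph{even} martingale differences are exactly the diagonal terms $L_{2k}-L_{2k-1}=\varepsilon_k\,\be[f_k\mid\cg_{n_k}]$ we wish to control, while the odd differences absorb the off-diagonal remainders produced at each refinement $\cg_{n_{k-1}}\subseteq\cg_{n_k}$.

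Finally, apply the UMD property with the deterministic sign pattern $\epsilon_{2k}:=+1$, $\epsilon_{2k-1}:=-1$, equivalently the $\{0,1\}$-valued predictable multiplier $v_m:=(1+\epsilon_m)/2$: then $\sum_{m=1}^{2K} v_m(L_m-L_{m-1}) = \sum_k(L_{2k}-L_{2k-1})$, and
\begin{equation*}
\BNorm{\sum_k \varepsilon_k\,\be[f_k\mid\cg_{n_k}]}{L^p} \leq \tfrac{1+\beta_p(E)}{2}\,\Norm{L_{2K}}{L^p}\leq \beta_p(E)\,\Norm{V}{L^p},
\end{equation*}
where the final step uses $L^p$-contractivity of the conditional expectation (because $V$ itself need not be $\mathcal{I}_{2K}$-measurable) combined with $\beta_p(E)\geq 1$. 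The main obstacle---and the reason a one-step filtration $\sigma(\varepsilon_1,\ldots,\varepsilon_k)\otimes\cg_{n_k}$ does not suffice---is that its martingale differences pick up off-diagonal cross-terms $\sum_{j<k}\varepsilon_j(\be[f_j\mid\cg_{n_k}]-\be[f_j\mid\cg_{n_{k-1}}])$; the doubled filtration, together with the predictable (rather than random-sign) martingale transform, is precisely the mechanism that separates the two updates and isolates the diagonal contribution with the linear constant $\beta_p(E)$.
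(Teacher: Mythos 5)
Your proof is correct. Note, however, that the paper provides no proof of Proposition \ref{steininequality}: the surrounding subsection explicitly defers all proofs and references of its propositions to Neerven's lecture notes \cite{neerven2008}, so there is no in-paper argument to compare against. The argument you give is the standard one for the vector-valued Stein inequality, and is essentially what the cited source does: one works on $\Omega_\varepsilon\times\Omega$ with an interleaved filtration that alternately refines the Rademacher factor and the $\Omega$ factor, so that for the martingale $L_m=\be[V\mid\mathcal{I}_m]$ with $V=\sum_j\varepsilon_j f_j$ the even increments $L_{2k}-L_{2k-1}=\varepsilon_k\,\be[f_k\mid\cg_{n_k}]$ are exactly the diagonal terms one wants, and the odd increments carry the off-diagonal update from $\cg_{n_{k-1}}$ to $\cg_{n_k}$; one then isolates the even increments with a $\{0,1\}$-valued deterministic transform and closes with $L^p$-contractivity of $\be[\,\cdot\mid\mathcal{I}_{2K}]$. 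Two bookkeeping remarks: (i) the reduction to $n_1\leq\cdots\leq n_K$ is a matter of exchangeability of the Rademacher sums under simultaneous permutation of the pairs $(n_k,f_k)$, while the refining hypothesis is what is actually used to make $(\mathcal{I}_m)$ a filtration, so the parenthetical ``using that $(\cg_n)$ is refining'' is slightly misplaced; (ii) your averaging argument with $v_m=(1+\epsilon_m)/2$ in fact yields the marginally sharper constant $\tfrac{1+\beta_p(E)}{2}$, which you then dominate by $\beta_p(E)$ since $\beta_p(E)\geq1$ — all consistent with the stated bound.
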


\subsection{Pythagoras' theorem for functions adapted to a sparse collection}

Let $\mu$ be a Borel measure on $\br^d$. We use the notation $\langle f \rangle_Q^\mu:=\frac{1}{\mu(Q)}f \dmu.$ Let $\cs$ be a collection of dyadic cubes. For each $S\in\cs$, let $\chs(S)$ denote the collection of all maximal $S'\in\cs$ such that $S'\subsetneq S$ and let $\es(S)$ denote the set $\es(S):=S\setminus \bigcup_{S'\in\chs(S)} S'$. For each $Q\in\cd$, let $\pis(Q)$ denote the minimal dyadic cube $S\in\cs$ such that $S\supseteq Q$. We say that the collection $\cs$ is {\it sparse} if $\mu( \es(S)) \geq \frac{1}{2} \mu(S)$ for every $S\in\cs$.

\begin{lemma}[Special case of the dyadic Carleson embedding theorem]\label{carlesonembedding}Let $E$ be a Banach space. Let $1<p<\infty$. Assume that $\cs$ is a sparse collection. Then
$$
(\sum_{S\in\cs}(\langle \norme{f} \rangle_{S}^\mu)^p \mu(S))^{1/p}\leq 2p'\norm{f}_{L^p(\mu;E)}.
$$
\end{lemma}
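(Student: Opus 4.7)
The plan is to reduce to a scalar statement and then apply the standard Carleson packing argument combined with Doob's maximal inequality. First I would set $g:=\norme{f}$; since $\langle\norme{f}\rangle_S^\mu=\langle g\rangle_S^\mu$ and $\norm{g}_{L^p(\mu)}=\norm{f}_{L^p(\mu;E)}$, this reduces the claim to showing
\begin{equation*}
\Bigl(\sum_{S\in\cs}(\langle g\rangle_S^\mu)^p\mu(S)\Bigr)^{1/p}\leq 2p'\norm{g}_{L^p(\mu)}
\end{equation*}
for nonnegative scalar-valued $g\in L^p(\mu)$.

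Next I would use the sparseness hypothesis to pass from integration against $\mu(S)$ to integration against the pairwise disjoint subsets $\es(S)$. Introducing the $\cs$-maximal function $M_\cs g(x):=\sup_{Q\in\cs,\,Q\ni x}\langle g\rangle_Q^\mu$, I would combine the inequality $\mu(S)\leq 2\mu(\es(S))$, the pointwise bound $\langle g\rangle_S^\mu\leq M_\cs g(x)$ for $x\in\es(S)$, and the disjointness of $\{\es(S)\}_{S\in\cs}$ to obtain
\begin{equation*}
\sum_{S\in\cs}(\langle g\rangle_S^\mu)^p\mu(S)\leq 2\sum_{S\in\cs}\int_{\es(S)}(M_\cs g)^p\dmu\leq 2\int_{\rd}(M_\cs g)^p\dmu.
\end{equation*}

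Finally, since $M_\cs g$ is dominated pointwise by the full dyadic maximal function $M_\cd g$, Doob's $L^p$ maximal inequality applied to the martingale of dyadic conditional expectations with respect to $\mu$ gives $\norm{M_\cd g}_{L^p(\mu)}\leq p'\norm{g}_{L^p(\mu)}$. Chaining these estimates bounds the sum by $2(p')^p\norm{g}_{L^p(\mu)}^p$, and taking $p$-th roots together with $2^{1/p}\leq 2$ yields the claim. I expect no serious obstacle here; the argument is the standard Carleson embedding proof. The one mild technical point is ensuring that Doob's inequality applies in the present setup, but this is immediate once the dyadic conditional expectations with respect to $\mu$ are recognized as a martingale on a $\sigma$-finite Borel space.
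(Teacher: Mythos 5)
Your proof is correct and is essentially the argument the paper uses: reduce to the scalar function $\norme{f}$, use sparseness to replace $\mu(S)$ by $2\mu(\es(S))$, exploit disjointness of the sets $\es(S)$ to pass to an integral of the dyadic $\mu$-maximal function, and finish with the $L^p$-bound $p'$ for that maximal function. The only cosmetic difference is that you interpose the restricted maximal function $M_\cs$ before dominating it by the full dyadic one, while the paper goes directly to $\inf_S M^\mu\norme{f}$; this changes nothing of substance.
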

\begin{proof}
For the dyadic Hardy--Littlewood maximal function $M^\mu f=\sup_{Q\in\cd} 1_Q\langle f \rangle^\mu_Q$, we have $\langle \norme{f}\rangle_{S}^\mu \leq \inf_{S}M^\mu \norme{f}$, and, moreover, $\norm{M \norme{f}}_{L^p(\mu)}\leq p' \norm{\norme{f}}_{L^p(\mu)}$. These facts together with the assumptions
yield 
\begin{equation*}
\begin{split}
&\big(\sum_{S\in\cs}(\langle \norme{f} \rangle^\mu_S)^p\mu(S)\big)^{1/p}\leq 2^{1/p}\big(\sum_{S\in\cs}\int_{\es(S)}(\inf_{S} M^\mu \norme{f} ) \dmu\big)^{1/p}\\
&\leq 2^{1/p} \norm{M^\mu \norme{f}}_{L^p(\mu)}\leq 2^{1/p}p'\norm{\norme{f}}_{L^p(\mu)}=2^{1/p}p'\norm{f}_{L^p(\mu;E)}.
\end{split}
\end{equation*}
\end{proof}
For each $S\in\cs$, we define the operator $P_S$ by setting
$$
P_Sf:=\sum_{\substack{Q\in\cd: \\\pi(Q)=S}} D_Qf
$$
for every locally integrable $f:\br^d\to E$.
\begin{lemma}[Properties of the operators $P_S$]\label{propertiesofps}For each $S\in\cs$, 
the operator $P_S$ has the following properties:
\begin{itemize}
\item[(i)]$$P_Sf=\sum_{S'\in\chs(S)} \langle f \rangle_{S'} 1_{S'}+f1_{\es(S)}-\langle f \rangle_S 1_S.$$
\item[(ii)]$P_Sf=f$ if and only if $f$ is supported on $S$, constant on each $S'\in\chs(S)$, and satisfies $\int_S f\dmu=0$.
\item[(iii)]$P_S^2=P_S$, $P_SP_{T}=0$ whenever $T\in\cs$ with $T\neq S$.
\item[(iv)]$\int g P_Sf \dmu =\int P_S g f \dmu$  for every $f\in L^p(E)$ and $g\in L^{p'}(E^*)$.
\item[(v)] $\normlp{P_Sf}\leq 2 \normlp{1_Sf}.$
\end{itemize}
\end{lemma}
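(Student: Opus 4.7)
The plan is to establish the explicit formula (i) first, then derive (ii)--(v) from it as essentially direct consequences. For (i), I would use the observation that by the nested structure of dyadic cubes, $\{Q\in\cd: Q\subseteq S\}$ is the disjoint union of $\{Q\in\cd: \pis(Q)=S\}$ and of the subcollections $\{Q\in\cd: Q\subseteq S'\}$ for $S'\in\chs(S)$. Applying the standard dyadic martingale telescoping identity $\sum_{Q\in\cd,\,Q\subseteq R}D_Qf=(f-\langle f\rangle_R)1_R$ (understood in the $L^p$-sense via the martingale convergence theorem) to $R=S$ and, separately, to each $R=S'\in\chs(S)$, and then taking the difference, collapses the sum defining $P_Sf$ to exactly the formula claimed in (i).

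Given (i), property (ii) follows by matching coefficients: for $P_Sf=f$ the formula forces $f$ to be supported on $S$, to be constant on each $S'\in\chs(S)$ (with value $\langle f\rangle_{S'}$), and (by integrating both sides over $S$) to satisfy $\langle f\rangle_S=0$; the converse is immediate from the formula. The idempotence $P_S^2=P_S$ in (iii) is then the observation that $P_Sf$ satisfies the three conditions of (ii); the only point needing verification, $\int_S P_Sf\dmu=0$, is a direct consequence of (i).

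The main obstacle is the orthogonality $P_SP_T=0$ for distinct $S,T\in\cs$, which I would handle by case analysis on the dyadic relationship between $S$ and $T$. If $S\cap T=\emptyset$, then $P_Tf$ is supported in $T\subseteq S^c$ and (i) gives zero termwise. If $T\subsetneq S$, then $T$ is contained in a unique $S'\in\chs(S)$; the key fact $\int_T P_Tf\dmu=0$ (a consequence of (ii) applied to $P_T^2=P_T$) forces $\langle P_Tf\rangle_{S'}=\langle P_Tf\rangle_S=0$, so every term of (i) for $P_S(P_Tf)$ vanishes. If $S\subsetneq T$, then $S$ lies in a unique $T'\in\chs(T)$, and (i) applied to $P_T$ shows that $P_Tf$ is constant on $T'$, hence on $S$; since $P_S$ annihilates any function that is constant on $S$ (again evident from (i)), we conclude $P_SP_Tf=0$.

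Finally, (iv) is obtained by substituting (i) into $\int\langle g,P_Sf\rangle\dmu$ and reading the result as
\[
\int_{\es(S)}\langle g,f\rangle\dmu+\sum_{S'\in\chs(S)}\mu(S')\,\langle \langle g\rangle_{S'},\langle f\rangle_{S'}\rangle-\mu(S)\,\langle \langle g\rangle_S,\langle f\rangle_S\rangle,
\]
which is manifestly symmetric in $f$ and $g$. For (v), I would write $P_Sf=\be[1_Sf\mid\cf_S]-\langle f\rangle_S 1_S$, where $\cf_S$ is the $\sigma$-algebra whose atoms inside $S$ are the Borel subsets of $\es(S)$ together with each cube $S'\in\chs(S)$ treated as a single atom, and then invoke the $L^p$-contractivity of the conditional expectation and Jensen's inequality for $\langle f\rangle_S$ through the triangle inequality to obtain the constant $2$.
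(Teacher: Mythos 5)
Your proof is correct and takes essentially the same route as the paper: establish the explicit formula (i) by comparing the martingale telescoping identity $\sum_{Q\subseteq R}D_Qf=(f-\langle f\rangle_R)1_R$ over $R=S$ against its restrictions to the children $R=S'$, then read off (ii)--(v) from the formula. The paper itself only writes out the derivation of (i) and declares that the remaining properties follow; your argument fills in those omitted verifications (in particular the case analysis for $P_SP_T=0$ and the conditional-expectation reformulation giving the constant $2$ in (v)), and all of them check out.
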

\begin{proof}We prove the property (i), from which the other properties follow. On the one hand,
$$
f1_S=\sum_{Q: Q\subseteq S} D_Qf+\langle f \rangle_{S}1_S,
$$
on the other hand,
$$
f1_S=f1_{\es(S)}+\sum_{S'} f1_{S'}= f1_{\es(S)}+\sum_{S'\in\chs(S)} \big(\sum_{Q:Q\subseteq S'} D_Qf +\langle f \rangle_{S'}1_{S'}\big).
$$
Thus, by comparing,
$$
\sum_{Q: Q\subseteq S} D_Qf-\sum_{S'\in\chs(S)}\sum_{Q:Q\subseteq S'} D_Qf=f1_{\es(S)}+\sum_{S'\in\chs(S)} +\langle f \rangle_{S'}1_{S'}-\langle f \rangle_{S}1_S.
$$
Observing that $$\sum_{Q: Q\subseteq S} -\sum_{S'\in\chs(S)}\sum_{Q:Q\subseteq S'}=\sum_{Q: \pis(Q)=S}$$
completes the proof.
\end{proof}
The following variant of Pythagoras' theorem in the case $E=\br$ was proven by Katz and Pereyra \cite[Lemma 7]{pereyra1997} by using a multilinear estimate. We next give a different proof of the theorem, which extends it to an arbitrary Banach space $E$.
\begin{lemma}[Pythagoras' theorem for sparsely supported, piecewise constant functions]\label{pythagorassparse}Let $E$ be a Banach space. Let $1\leq p<\infty$. Let $\cs$ be a sparse collection of dyadic cubes. For each $S\in\cs$, assume that $f_S$ is a function that is supported on $S$ and constant on each $S'\in\chs(S)$. Then 
$$
\normlp{\sum_{S} f_S}\leq 3p\,\big(\sum_{S} \normlp{f_S}^p \big)^{1/p}.
$$
Moreover, the reverse estimate
$$
\big(\sum_{S} \normlp{f_S}^p \big)^{1/p}\leq 6p'\normlp{\sum_{S} f_S}
$$
holds if, in addition, one of the following conditions is satisfied:
$$
\mathrm{\text{(i)}} \int_S f_S\dmu=0, \quad \text{ or} \quad \text{(ii) } E=\br \text{ and } f_S\geq0,
$$
but may in general fail otherwise.
\end{lemma}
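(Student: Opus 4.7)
The plan is to prove the two inequalities by essentially independent arguments. For the \emph{forward} inequality, I would proceed by duality against $h$ with $\normlpp{h}\le 1$ and split each pairing $\int h\,f_S\dmu$ along the tree structure of $\cs$. Since $f_S$ is supported on $S$ and takes the constant value $c_{S,S'}\in E$ on each child $S'\in\chs(S)$,
\[
\int h\,f_S\dmu \;=\; \int_{\es(S)} h\,f_S\dmu \;+\; \sum_{S'\in\chs(S)} \mu(S')\,\pair{\langle h\rangle_{S'}}{c_{S,S'}}.
\]
Summed over $S$, the $\es(S)$-piece is controlled by one Hölder inequality on each $\es(S)$ followed by a discrete Hölder inequality in $S$, exploiting the crucial disjointness of $\{\es(S)\}_{S\in\cs}$; it contributes the factor $(\sum_S\normlp{f_S}^p)^{1/p}$. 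The child piece, after reindexing by the non-root $S'\in\cs$ (each with a unique parent $\hat S'$), becomes $\sum_{S'}\mu(S')\,\pair{\langle h\rangle_{S'}}{c_{\hat S',S'}}$; a Hölder inequality together with the Carleson embedding theorem (Lemma~\ref{carlesonembedding}) applied to $h$ at exponent $p'$ contributes $2p\,(\sum_S\normlp{f_S}^p)^{1/p}$. Adding the two pieces gives the constant $2p+1\le 3p$.

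For the \emph{reverse} inequality under hypothesis (i), I would identify each $f_S$ with $P_S g$ where $g:=\sum_T f_T$. Since $\int_S f_S\dmu=0$, Lemma~\ref{propertiesofps}(ii) gives $P_S f_S=f_S$ and (iii) gives $P_S f_T=0$ for $T\neq S$, so indeed $f_S=P_S g$. Applying the pointwise formula of Lemma~\ref{propertiesofps}(i) decomposes $P_S g$ into three pieces (averages over children, restriction to $\es(S)$, and the average over $S$), and bounds $\normlp{f_S}^p$ by $3^{p-1}$ times the sum of the three corresponding $L^p$-norms-to-the-$p$. Summing in $S$: the $\es(S)$-piece telescopes to $\normlp{g}^p$ by disjointness; both other pieces reduce, after reindexing of the children sum, to the single quantity $\sum_S|\langle g\rangle_S|_E^p\mu(S)$, controlled by $(2p')^p\normlp{g}^p$ via Jensen and Lemma~\ref{carlesonembedding} applied to $g$ at exponent $p$. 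Extracting the $p$-th root yields the constant $6p'$.

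Hypothesis (ii) replaces cancellation by positivity: since $g\ge f_S\ge 0$ pointwise, I would decompose $\normlp{f_S}^p=\int_{\es(S)} f_S^p\dmu+\sum_{S'\in\chs(S)} c_{S,S'}^p\mu(S')$. The $\es(S)$-summand contributes at most $\normlp{g}^p$ by disjointness. For the child piece, the sparseness bound $\mu(\es(S'))\ge\tfrac12\mu(S')$ combined with the pointwise estimate $g\ge c_{S,S'}$ on $\es(S')\subset S'$ gives $c_{S,S'}^p\mu(S')\le 2\int_{\es(S')} g^p\dmu$; reindexing by the non-root $S'\in\cs$ and using disjointness of $\{\es(S')\}$ yields at most $2\normlp{g}^p$. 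Altogether $\sum_S\normlp{f_S}^p\le 3\normlp{g}^p\le (6p')^p\normlp{g}^p$.

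The main obstacle I anticipate is the forward inequality, since $E$ is only a Banach space and vector-valued martingale tools (UMD, Stein) are off limits; the duality-and-tree split sidesteps this by reducing everything to the scalar Carleson embedding, but requires careful bookkeeping of the reindexing from pairs $(S,S')$ to non-root children. The reverse inequalities are comparatively routine once one recognizes (i) that mean zero lets $f_S$ be read off from $g$ through the projection $P_S$, and (ii) that sparseness alone --- without any cancellation --- already provides a lower bound on $g$ strong enough to recover the $\ell^p$-sum of $\normlp{f_S}$.
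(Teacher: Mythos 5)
Your arguments for the forward inequality and for case (ii) follow the paper in all essentials: for the forward estimate both dualize, split the pairing across $\es(S)$ and the $\cs$-children, and use disjointness of $\{\es(S)\}_{S\in\cs}$ together with the Carleson embedding (Lemma~\ref{carlesonembedding}) at exponent $p'$; for (ii) you combine positivity with the sparseness bound $\mu(S')\le 2\mu(\es(S'))$ exactly as the paper does. Your handling of the reverse estimate under hypothesis (i), however, is a genuinely different and somewhat more direct route. The paper dualizes against arbitrary test functions $g_S\in L^{p'}(E^*)$, uses Lemma~\ref{propertiesofps}(iii)--(iv) to replace $g_S$ by $P_Sg_S$, and then \emph{bootstraps} from the already-proved forward estimate applied to the family $\{P_Tg_T\}_{T\in\cs}$, closing with property (v). You avoid both the duality and the appeal to the forward estimate: the mean-zero hypothesis, via Lemma~\ref{propertiesofps}(ii)--(iii), yields the identity $f_S=P_Sg$ with $g:=\sum_Tf_T$, and you then expand $P_Sg$ by the explicit formula of part (i) and control the three resulting $\ell^p$-sums over $S$ directly by disjointness of $\{\es(S)\}$, Jensen, and the Carleson embedding applied to $g$ itself at exponent $p$. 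Both routes land on $6p'$. Your route is self-contained and makes the role of the cancellation hypothesis transparent --- it is precisely what lets $f_S$ be read off from the full sum $g$ through the projection $P_S$ --- while the paper's route is marginally shorter once the forward estimate is in hand. (One small omission: you did not address the final claim that the reverse estimate ``may in general fail otherwise''; the paper disposes of this with the two-term example $f_{S}:=1_{S_-}$, $f_{S_-}:=-1_{S_-}$ on $S=[0,1)$, $S_-=[0,1/2)$.)
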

\begin{proof}First, we prove the direct estimate. 
By duality, it is equivalent to the estimate
$$
\int \sum_{S} f_S g \dmu \leq 3p\,\big(\sum_{S} \normlp{f_S}^p \big)^{1/p} \normlpp{g}.
$$ Since $f_S$ is supported on $S$, since $f_S$ is constant on $S'\in\chs(S)$, and since $S$ is partitioned by $\chs(S)$ and $\es(S)$,  we have
$$
\int \sum_{S} f_S g \,\dmu= \sum_{S} \int_S f_S g \dmu= \sum_{S} \sum_{S'\in\chs(S)}\langle f_S \rangle_{S'} \int_{S'} g\dmu+  \int \sum_{S} 1_{\es(S)} f_S g \dmu.
$$
We can estimate the second term by H\"older's inequality and the pairwise disjointness of the sets $\es(S)$,
\begin{equation*}
\begin{split}
\lvert \int  \sum_{S}1_{\es(S)} f_S g \dmu \rvert&\leq \normlp{\sum_{S}1_{\es(S)} f_S}\normlpp{g}\\
&=\big(\sum_{S}\normlp{1_{\es(S)} f_S}^p\big)^{1/p}\normlpp{g}.
\end{split}
\end{equation*}
We can estimate the first term as follows.
\begin{equation*}
\begin{split}
&\lvert \sum_{S} \sum_{S'\in\chs(S)}\langle f_S \rangle_{S'} \int_{S'} g \dmu \rvert\\
&\leq \sum_{S} \sum_{S'\in\chs(S)}\norme{ \langle  f_S \rangle_{S'}}\,\mu(  S' )^{1/p} \frac{\normes{ \int_{S'} g\dmu }}{\mu(S')} \mu(S' )^{1/p'}  \\
&\leq \Big( \sum_{S} \sum_{S'\in\chs(S)}\norme{ \langle  f_S \rangle_{S'}}^p\mu(  S' )\Big)^{1/p} \Big( \sum_{S} \sum_{S'\in\chs(S)} (\frac{\int_{S'} \normes{ g}\dmu }{\mu(S')} )^{p'} \mu( S' ) \Big)^{1/p'} \\
&\leq \Big( \sum_{S} \int_{S}\norme{ f_S }^p \dmu\Big)^{1/p}    \Big( \sum_{S} \sum_{S'\in\chs(S)} \langle \normes{g} \rangle_{S'}^{p'} \mu(S') \Big)^{1/p'}.
\end{split}
\end{equation*}
The proof of the direct estimate is completed by the special case of the dyadic Carleson embedding theorem, Lemma \ref{carlesonembedding}.

Next, we prove the reverse estimate under the assumption that $\int_S f_S=0$. By duality, this estimate is equivalent to the estimate
$$
\sum_{S} \int f_S g_S \dmu \leq 6p' \normlp{\sum_{S} f_S} \big( \sum_{S} \normlpp{g_S}^{p'}\big)^{1/p'}
$$
for arbitrary functions $g_S\in L^{p'}(E^*)$. 
By the properties of the operators $P_S$, Lemma \ref{propertiesofps}, we have that $\int f_S g_S \dmu=\int P_Sf_S g_S \dmu=\int P_S^2f_S g_S \dmu=\int P_Sf_S P_Sg_S \dmu=\int P_Sf_S \sum_{T}P_Tg_T \dmu=\int f_S \sum_{T}P_Tg_T \dmu$. Note that, although the functions $g_T$ are arbitrary, the functions $P_Tg_T$ satisfy the assumptions for the direct estimate: Each $P_Tg_T$ is supported on $T$, and constant on each $T'\in\chs(T)$. Thus, by H\"older's inequality and the direct estimate,
\begin{equation*}
\begin{split}
\sum_{S} \int f_S g_S \dmu&=\int \sum_{S} f_S \sum_{T\in\cs}P_T g_T \dmu\\
&\leq \normlp{\sum_{S} f_S} \normlpp{\sum_{T\in\cs}P_T g_T}\\
&\leq 3 p' \normlp{\sum_{S} f_S} \big( \sum_{T\in\cs}\normlpp{P_Tg_T}^{p'}\big)^{1/p'}\\
&\leq 6 p' \normlp{\sum_{S} f_S} \big( \sum_{T\in\cs}\normlpp{g_T}^{p'}\big)^{1/p'}.
\end{split}
\end{equation*}

Next, we prove the reverse estimate under the assumption that $E=\br$ and $f_S\geq0$. Since $f_S$ is supported on $S$, since $f_S$ is constant on $S'\in\chs(S)$, since $S$ is partitioned by $\chs(S)$ and $\es(S)$ and since $\mu(S')\leq 2 \mu(\es(S'))$ , we can write
\begin{equation*}
\begin{split}
\norm{f_S}^p_{L^p(\br)}&=\sum_{S'\in\chs(S)} \abs{\langle f_S \rangle_{S'}} ^p \mu( S') +\int 1_{\es(S)} \abs{f_S}^p\dmu\\
&\leq2 \sum_{S'\in\chs(S)}\langle \abs{f_S} \rangle_{S'} ^p \mu( \es(S') ) +\int 1_{\es(S)} \abs{f_S}^p \dmu\\
&= 2 \int \sum_{S'\in\chs(S)}1_{\es(S')} \abs{f_S}^p \dmu +\int 1_{\es(S)} \abs{f_S} ^p\dmu.
\end{split}
\end{equation*}
Summing over $S$ and taking into account that $\es(S)$ are pairwise disjoint yields
$$
\sum_{S} \norm{f_S}^p_{L^p(\br)} \leq 3 \int \big(\sum_{S} 1_{\es(S')} \abs{f_S}\big)^p \dmu.
$$Using the assumption that $f_S\geq 0$ completes the proof.

Lastly, we note that a simple example shows that the reverse estimate may in general fail. Indeed, let $S:=[0,1)$, $S_-:=[0,1/2)$,  $f_{S}:=1_{S_-}$, and $f_{S_-}:=-1_{S_-}$. Then $\norm{f_{S}}_{L^p(\br;\br)}^p+\norm{f_{S_-}}_{L^p(\br;\br)}^p=2\lvert S_-\rvert$ but $\norm{f_{S}+f_{S_-}}_{L^p(\br;\br)}=0$.
\end{proof}
\section{Decoupling of the sum of martingale differences}
Let $(X,\cf,\mu)$ be a $\sigma$-finite measure space. Let $(\ca_n)_{n=-\infty}^\infty$ be a refining sequence of countable partitions of $X$ into measurable sets of finite positive measure. Let $\ca:=\bigcup_{n=-\infty}^\infty \ca_n$. For each $K\in\ca_n$, let $\ch_\ca(K):=\{K'\in\ca_{n+1} : K'\subseteq K\}$. For each $K\in\ca$, let $f_K$ be a function that is supported on $K$ and constant on $K'\in\ch_\ca(A)$ and such that $\int_{K} f_K \mathrm{d}\mu =0$. Let $(Y_K,\cg_K,\nu_K)$ be the probability space such that $Y_K:=K$, $\cg_K$ is the $\sigma$-algebra generated by $\{K\}\cup\ch_\ca(K)$, and $\nu_K=\mu(K)^{-1} \mu\lvert_K$. Let $(Y,\cg,\nu)$ be the product probability space of the spaces $(Y_K,\cg_K,\nu_K)_{K\in\ca}$.

We notice that the sequence $(d_k)_{k=-\infty}^\infty$ with $d_k(x,y):=\sum_{K\in\ca_{k-1}} f_K(x)$ is a martingale difference sequence adapted to the filtration $(\cf_k)_{k=-\infty}^\infty$ generated by the refining sequence of partitions $(\ca_k)_{k=-\infty}^\infty$. Conversely, each martingale difference sequence $(d_k)_{k=-\infty}^\infty$ adapted to the filtration $(\cf_k)_{k=-\infty}^\infty$ can be written as $d_k:=\sum_{K\in\ca_{n-1}}f_K$, where for each $K\in\ca_{k-1}$ the function $f_K$ is defined by $f_K:=1_Kd_k=\sum_{\substack{K'\in\ca_{k}:K'\subseteq K}} \langle d_k \rangle_{K'} 1_{K'}.$

A variant of the following decoupling equality was proven by Hyt\"onen \cite[Theorem 6.1]{hytonen2008} as a corollary of McConnell's \cite[Theorem 2.2]{mcconnell1989} decoupling inequality for UMD-valued martingale difference sequences.

\begin{theorem}[Decoupling equality for piecewise constant, cancellative functions]\label{martingaledecoupling}Let $1<p<\infty$. Let $E$ be a UMD space. Then 
\begin{equation*}
\begin{split}
&\frac{1}{\beta_p(E)} \big(\be \norm{\sum_{K\in\ca} \epsilon_K 1_K(x)f_K(y_K)}_{L^p(\mathrm{d}\mu(x)\times\mathrm{d}\nu(y);E)}^p\big)^{1/p}\\
&\leq \norm{\sum_{K\in\ca} f_K(x)}_{L^p(\mathrm{d}\mu(x);E)}\\
&\leq \beta_p(E) \big(\be \norm{\sum_{K\in\ca} \epsilon_K 1_K(x)f_K(y_K)}_{L^p(\mathrm{d}\mu(x)\times\mathrm{d}\nu(y);E)}^p\big)^{1/p}.
\end{split}
\end{equation*}
\end{theorem}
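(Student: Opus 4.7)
My plan is to realize both sides of the equivalence as $L^p$-norms of $E$-valued martingale difference sums on two different probability spaces, and then to pass between them using the UMD property together with Lemma~\ref{independentfunctions}.

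First, I would identify the martingale structure on $X$. Setting $d_n:=\sum_{K\in\ca_{n-1}}f_K$, the assumptions on $f_K$ (supported in $K$, constant on $\ch_\ca(K)$, with $\int_K f_K\,\ud\mu=0$) make $(d_n)$ an $E$-valued martingale difference sequence adapted to $\cf_n:=\sigma(\ca_n)$, with $\sum_{K\in\ca}f_K=\sum_n d_n$. In the same way, on the product space $X\times Y\times \Omega_\epsilon$ equipped with the enlarged filtration
\[
\cg_n:=\cf_n\otimes\sigma\bigl(y_K,\epsilon_K:\operatorname{level}(K)\le n-1\bigr),
\]
the function $\tilde d_n(x,y,\epsilon):=\sum_{K\in\ca_{n-1}}\epsilon_K 1_K(x)f_K(y_K)$ is a $(\cg_n)$-martingale difference: it is $\cg_n$-measurable, and its conditional expectation given $\cg_{n-1}$ vanishes because the $\epsilon_K$ and $y_K$ for $\operatorname{level}(K)=n-1$ are independent of $\cg_{n-1}$ with $\Exp[\epsilon_K]=0$ and $\Exp[f_K(y_K)]=\mu(K)^{-1}\int_K f_K\,\ud\mu=0$.

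Next, I would observe that $(d_n)$ and $(\tilde d_n)$ are tangent sequences: conditional on $\cf_{n-1}$, which identifies the ancestor $K=K_x^{n-1}\in\ca_{n-1}$ of the point $x$, the distribution of $d_n(x)=f_K(x)$ under the normalized restriction of $\mu$ to $K$ coincides, up to the symmetrizing sign $\epsilon_K$, with that of $\tilde d_n=\epsilon_K f_K(y_K)$ under $\nu_K\times\bp_{\epsilon_K}$, because $y_K$ is by construction uniformly distributed on $K$. Moreover $(\tilde d_n)$ is $\cf_\infty$-conditionally independent, since the increments depend on disjoint groups of the mutually independent variables $\{(y_K,\epsilon_K):K\in\ca\}$. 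Lemma~\ref{independentfunctions}, applied to the partial sums viewed as functions of the product variables $(y_K,\epsilon_K)_K$, provides the conditional-expectation contraction that links the coupled norm $\|\sum_n d_n\|_{L^p(\mu;E)}$ with the decoupled norm $\|\sum_n \tilde d_n\|_{L^p(\mu\times\nu\times\bp;E)}$.

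Finally, I would use the UMD property to handle the random signs $\epsilon_K$. Within a single level $n$, the cubes $K\in\ca_{n-1}$ are disjoint, so the supports of the summands $\epsilon_K 1_K(x)f_K(y_K)$ are disjoint in $x$ and pointwise only the single term with $K=K_x^{n-1}$ survives; consequently the $L^p(E)$-norm is unchanged when the per-cube signs $\epsilon_K$ are replaced by a single per-level sign $\eta_n$. UMD applied to $(\tilde d_n)$ then costs a single factor $\beta_p(E)$ to insert or remove these per-level signs, and the analogous application of UMD to $(d_n)$ combined with the tangent-sequence identification delivers the two-sided bound with constant $\beta_p(E)$.

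The main obstacle I anticipate is the tangent-sequence comparison itself: the transfer from $\sum_n d_n$ on $X$ to $\sum_n\tilde d_n$ on $X\times Y\times\Omega_\epsilon$ via Lemma~\ref{independentfunctions} must be executed in a way that loses only one factor $\beta_p(E)$, not one factor per level. The disjoint-support collapse described above, which converts what naively looks like an iterated decoupling into a single UMD step, is the crucial ingredient for avoiding this accumulation.
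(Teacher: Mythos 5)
Your proposal diverges fundamentally from the paper's proof, and it contains a genuine gap at the core step. The paper does \emph{not} argue via tangent sequences; instead, it explicitly constructs on the product space $X\times Y$ an interleaved martingale difference sequence $(u_l)_{l\in\frac12\bz}$ by splitting each level $k$ into a ``symmetric'' piece $u_k$ and an ``antisymmetric'' piece $v_k=u_{k+1/2}$, with $d_k=u_k+v_k$ and $\tilde d_k=u_k-v_k$. Once this is in place, passing between the coupled sum $\sum_k d_k$ and the decoupled sum $\sum_k \tilde d_k$ is a \emph{single} application of the UMD definition with alternating per-level signs $(+1,-1,+1,-1,\dots)$; no tangent-sequence machinery and no appeal to Lemma~\ref{independentfunctions} is used.

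Your argument, by contrast, asserts that Lemma~\ref{independentfunctions} ``provides the conditional-expectation contraction that links the coupled norm with the decoupled norm.'' This is where the proof breaks down. Lemma~\ref{independentfunctions} is a one-sided $L^p$-contraction for $\sum_n\Exp[f_n\,|\,\cg_n]$ when each $f_n$ lives on an independent coordinate of a product space; it involves no UMD constant and cannot produce a two-sided comparison. More importantly, there is no choice of sub-$\sigma$-algebras that would turn $\sum_K\epsilon_K 1_K(x) f_K(y_K)$ (a function of the $(y_K,\epsilon_K)$ coordinates) into $\sum_K f_K(x)$ via coordinatewise conditional expectations: $f_K(x)$ is not a $\cg_K$-conditional expectation of $\epsilon_K f_K(y_K)$ for any $\cg_K\subseteq\cg_K$, since the target depends on $x$ rather than on $(y_K,\epsilon_K)$. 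The phrase ``the tangent-sequence identification delivers the two-sided bound with constant $\beta_p(E)$'' is precisely the content of McConnell's decoupling theorem; it is the result to be proved, not a consequence of UMD applied once plus Lemma~\ref{independentfunctions}. What is missing from your argument is exactly the construction that makes the comparison a genuine martingale transform of a \emph{single} martingale on $X\times Y$ -- which is what the paper's $(u_k,v_k)$-decomposition accomplishes. Your observation that, by disjointness of supports in $x$ within each level and by Fubini, the per-cube signs $\epsilon_K$ may be replaced by per-level signs $\eta_n$ is correct and is indeed a needed ingredient, but it is a secondary reduction; the main difficulty, the two-sided decoupling itself, remains unaddressed.
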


Here we give another proof of the equality: Roughly speaking, we construct auxiliary martingale differences $u_K(x,y_K)$ and $v_K(x,y_K)$ such that $f_K(x)=u_K(x,y_K)+v_A(x,y_A)$ and $1_K(x)f_K(y_K)=u_K(x,y_K)-v_K(x,y_K)$, from which the decoupling equality follows by the definition of the UMD property. Let $d_k$ be a martingale difference sequence adapted to the filtration $\cf_k$. We write
$$
d_k(x,y)=\sum_{K\in\ca_{k-1}} 1_K(x) d_k(x) 1_K(y_K),
$$
and$$ \tilde{d}_k(x,y):=\sum_{K\in\ca_{k-1}} 1_K(x)d_k(y_K)1_K(y_K).
$$

\begin{proposition}[Constructing auxiliary martingale differences]
%
There exists a martingale difference sequence $(u_k)_{k\in\frac{1}{2}\bz}$ on the product measure space $(X\times Y,\cf\times\cg,\mu\times \nu)$ such that
$$
d_k=u_k+u_{k+1/2}, \quad\text{and} \quad\tilde{d}_k=u_k-u_{k+1/2}.
$$
\end{proposition}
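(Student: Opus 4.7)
The plan is to solve the two forced identities for $u_k$ and $u_{k+1/2}$, obtaining
\[
u_k := \tfrac12(d_k+\tilde d_k),\qquad u_{k+1/2}:=\tfrac12(d_k-\tilde d_k),
\]
as the only possible candidates, and then to exhibit a filtration $(\cf'_j)_{j\in\frac12\bz}$ on $X\times Y$ with respect to which $(u_j)$ is a martingale difference sequence. The natural choice is the interleaved filtration
\[
\cf'_k := \sigma(d_j,\tilde d_j:j<k)\vee\sigma(d_k+\tilde d_k),\qquad \cf'_{k+1/2}:=\sigma(d_j,\tilde d_j:j\leq k),
\]
so that at the half-integer step one adjoins precisely the antisymmetric datum $d_k-\tilde d_k$ that was deliberately withheld from $\cf'_k$; adaptedness of $u_k$ to $\cf'_k$ and of $u_{k+1/2}$ to $\cf'_{k+1/2}$ is then immediate.

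The heart of the argument is to verify the two conditional-expectation identities. For the half-integer step $\be[u_{k+1/2}\mid\cf'_k]=0$, I would argue by symmetry: conditionally on $\cf'_{k-1/2}$ the point $x$ lies in some $K\in\ca_{k-1}$, and within $K$ the $\ca_k$-children containing $x$ and $y_K$ are two i.i.d.\ uniform draws from $\ch_\ca(K)$, so the joint law of $(d_k,\tilde d_k)$ is invariant under the coordinate swap; the symmetric part $d_k+\tilde d_k$ is fixed by this swap while $d_k-\tilde d_k$ changes sign, and conditioning further on $d_k+\tilde d_k$ preserves swap-symmetry and forces $d_k-\tilde d_k$ to have conditional mean zero. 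For the integer step $\be[u_{k+1}\mid\cf'_{k+1/2}]=0$, I would instead use independence: given $\cf'_{k+1/2}$, the $\ca_{k+1}$-subcube of $x$ within its $\ca_k$-cube is uniform and independent of $\cf'_{k+1/2}$, so the mean-zero identity $\int_K d_{k+1}\dmu=0$ yields $\be[d_{k+1}\mid\cf'_{k+1/2}]=0$, while $\tilde d_{k+1}$ depends only on $\{y_K:K\in\ca_k\}$, which is independent of $\cf'_{k+1/2}$ (the latter uses only $\{y_K:K\in\ca_j,\,j\leq k-1\}$) and integrates to zero by the same mean-zero identity.

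The main obstacle I anticipate is the measure-theoretic bookkeeping on the product space $X\times Y$: making the swap-symmetry of the pair $(x,y_K)$ within $K$ rigorous given the different normalizations of $\mu|_K$ and $\nu_K$, and verifying that the $\sigma$-algebras $\cf'_j$ do not accidentally encode more information than intended when the map $K'\mapsto d_k|_{K'}$ fails to be injective on $\ch_\ca(K)$. Neither concern breaks the argument---the first dissolves once one writes the $K$-conditional law of $(x,y_K)$ as $\nu_K\times\nu_K$ and applies Lemma~\ref{independentfunctions} to handle conditional expectations across the product factors, and the second only shrinks $\cf'_k$, which if anything strengthens the swap-symmetry step.
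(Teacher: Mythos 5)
Your plan reproduces the paper's proof in all essentials: the forced algebraic solution $u_k=\tfrac12(d_k+\tilde d_k)$, $u_{k+1/2}=\tfrac12(d_k-\tilde d_k)$ is identical, the interleaved filtration adjoining $\sigma(d_k+\tilde d_k)$ at the integer step and $\sigma(d_k-\tilde d_k)$ at the half-integer step is (modulo the point below) the same as the paper's $\cu_k$, and your swap-symmetry and independence arguments are exactly the paper's observations~(d) and~(c) respectively, just phrased probabilistically rather than as explicit sums over the rectangles $A\times B$, $B\times A$, $A\times A$. So the route is the same; your exposition of why $\Exp[u_l\mid\cf'_{l-1/2}]=0$ is arguably cleaner, but it buys nothing structurally new.

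There is, however, a genuine gap in the filtration you propose. You set $\cf'_k=\sigma(d_j,\tilde d_j:j<k)\vee\sigma(d_k+\tilde d_k)$ and $\cf'_{k+1/2}=\sigma(d_j,\tilde d_j:j\le k)$, without adjoining the indicator functions $\{1_K\}_{K\in\ca_{k-1}}$. Since $(X,\cf,\mu)$ is only assumed $\sigma$-finite, not finite, your $\cf'_j$ need not be $\sigma$-finite --- for instance if all $d_j$ with $j\le k-1$ vanish, $\cf'_{k-1/2}$ is the trivial $\sigma$-algebra, and when $\mu(X)=\infty$ the conditional expectation operator $\Exp[\,\cdot\mid\cf'_{k-1/2}]$ is not well-defined on $L^1$. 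This is exactly why the paper's $\cu_k$ includes $\{1_K\}_{K\in\ca_{k-1}}$; the paper flags this explicitly as the reason for that inclusion. You anticipated that $\cf'_k$ might be \emph{smaller} than intended and dismissed it on the grounds that shrinking the conditioning $\sigma$-algebra only makes the mean-zero identity easier (true, by the tower property), but you overlooked that it can destroy the existence of the conditional expectation in the first place --- and without that, ``martingale difference sequence'' is not even well-posed. The fix is mechanical: replace $\cf'_k$ by $\cf'_k\vee\sigma(\{1_K\}_{K\in\ca_{k-1}})$ and $\cf'_{k+1/2}$ likewise; your swap and independence arguments go through verbatim since these indicators are constant on each $K\in\ca_{k-1}$ with respect to $x$ and do not involve $y_K$. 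Separately, a few phrasings are loose --- the children of $K$ containing $x$ and $y_K$ are drawn i.i.d.\ from $\nu_K$, not uniformly, and $\tilde d_{k+1}$ depends on $x$ through $\cf_k$ as well as on $\{y_K\}_{K\in\ca_k}$ --- but as you note these are repaired by conditioning on the enlarged $\sigma$-algebra and using the tower property, so they are presentational rather than substantive.
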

\begin{proof}
Let $d_k$ be a martingale difference sequence $d_k$ adapted to the filtration $\cf_k$ generated by a refining sequence of partitions $\ca_k$. The $\cf_k$-measurability of $d_k$ means that $d_k$ equals to a constant $\ave{d_k}_K$ on $K\in\ca_k$. Thus, we can write
\begin{equation*}
  d_k=\sum_{K\in\ca_{k-1}}1_K d_k
        =\sum_{K\in\ca_{k-1}} 1_K \sum_{\substack{K'\in\ca_k\\ K'\subseteq K}}1_{K'} d_k,
        =\sum_{K\in\ca_{k-1}} 1_K \sum_{\substack{K'\in\ca_k\\ K'\subseteq K}}1_{K'} \ave{d_k}_{K'}.
\end{equation*}
The martingale difference property $\Exp[d_k|\cf_{k-1}]=0$ means that for every $K\in\ca_{k-1}$ we have
\begin{equation*}
  \int_K d_k\dmu=\sum_{\substack{K'\in\ca_k\\ K'\subseteq K}}\ave{d_k}_{K'}\mu(K')=0.
\end{equation*}

First, we consider a fixed $K\in\ca_{k-1}$. Let $\nu_K$ be the the measure $\nu_K:=\mu(K)^{-1}\mu\lvert_K$ resctricted to the sub-$\sigma$-algebra $\cg_K$ that is generated by the collection $\{K\}\cup\{K'\in \ca_{k} : K'\subseteq K\}$. Note that the functions
\begin{equation*}
  d_K(x,y_K):=1_K(x)d_k(x)1_K(y_K)
  =\sum_{\substack{A,B\in\ca_k \\ A,B\subseteq K}}\ave{d_k}_A 1_A(x)1_B(y_K)
\end{equation*}
and
\begin{equation*}
 \tilde{d}_K(x,y_K):= 1_K(x)d_k(y_K)1_K(y_K)
  =\sum_{\substack{A,B\in\ca_k \\ A,B\subseteq K}}\ave{d_k}_B 1_A(x)1_B(y_K)
\end{equation*}
are equally distributed in the measure space $(\br^d\times K,\mu\times \nu_K, \cf\times\cg_K)$, which is to say that the functions take the same values in sets of equal measure. We define the functions $u_K(x,y_K)$ and $v_K(x,y_K)$ by the pair of equations 
\begin{equation*}
\begin{split}
  d_K(x,y_K) &=:u_K(x,y_K)+v_K(x,y_K), \\
  \tilde{d}_K(x,y_K)&=:u_K(x,y_K)-v_K(x,y_K).
\end{split}
\end{equation*}
Therefore, the function $u_K(x,y_K)$ can be written out as
\begin{equation*}
\begin{split}
  u_K(x,y_K)
  &=\frac{1}{2}(d_K(x,y_K)+\tilde{d}_K(x,y_K)) \\
  &=\sum_{\substack{A,B\in\ca_k \\ A,B\subseteq K}}\frac{1}{2}(\ave{d_k}_A+\ave{d_k}_B) 1_A(x)1_B(y_K) \\
  &=\sum_{\substack{A\in\ca_k\\ A\subseteq K}}\ave{d_k}_A 1_{A\times A}(x,y_K) \\
    &\qquad+\sum_{\substack{A,B\in\ca_k\\ A,B\subseteq K; A<B}}\frac{1}{2}(\ave{d_k}_A+\ave{d_k}_B)1_{A\times B\cup B\times A}(x,y_K),
\end{split}
\end{equation*}
where in the last step we introduced some order among the finite family
\begin{equation*}
  \{A\in\ca_k:A\subseteq K\}
  =\{A_i\}_{i=1}^{I(K)},
\end{equation*}
and defined $A<B$ if and only if $A=A_i$, $B=A_j$, and $i<j$. The function $v_K(x,y_K)$ can be written out as 
\begin{equation*}
\begin{split}
  v_K(x,y_K)
  &=\frac{1}{2}(d_K(x,y_K)-\tilde{d}_K(x,y_K)) \\
  &=\sum_{\substack{A,B\in\ca_k \\ A,B\subseteq K}}\frac{1}{2}(\ave{d_k}_A-\ave{d_k}_B) 1_A(x)1_B(y_K) \\
  &=\sum_{\substack{A,B\in\ca_k\\ A,B\subseteq K; A<B}}\frac{1}{2}(\ave{d_k}_A-\ave{d_k}_B)(1_{A\times B}(x,y_K)-1_{B\times A}(x,y_K)).
\end{split}
\end{equation*}
\begin{figure}[b]
    \centering
  \includegraphics[width=0.95\textwidth]{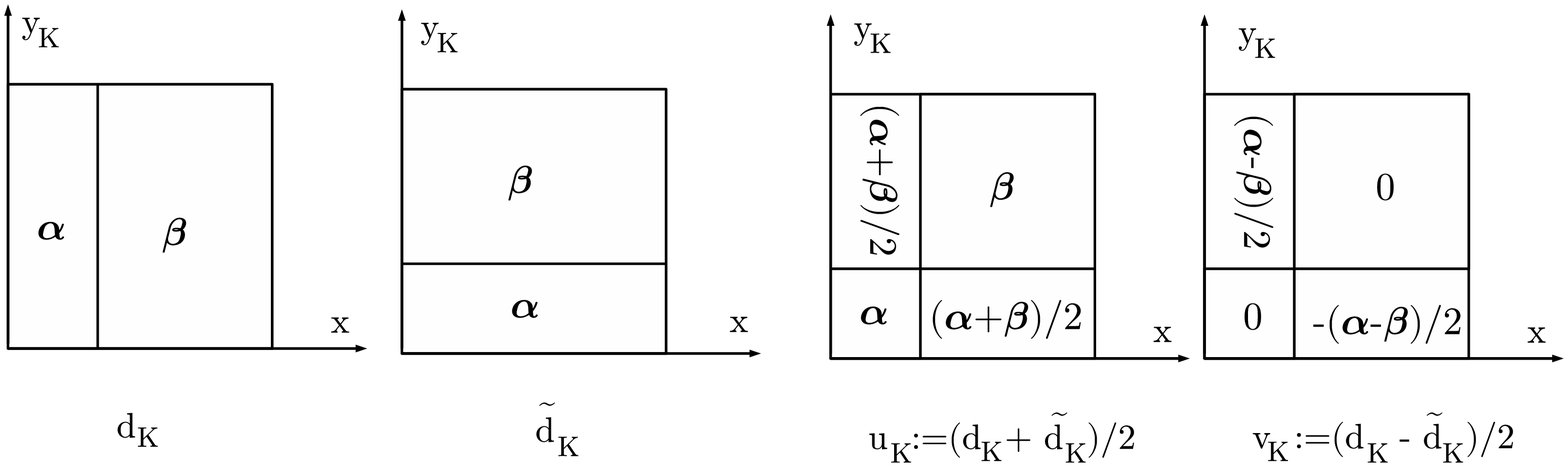}
    \caption{Functions $d_K$, $\tilde{d}_K$, $u_K$, and $v_K$.}
    \label{fig:decoupling}
\end{figure}

Next, we define the product measure space. For each $k\in\bz$ and $K\in\ca_{k-1}$, let $(Y_K,\cg_K,\nu_K)$ be the probability space such that  $Y_K:=K$, $\cg_K$ is the $\sigma$-algebra generated by $\{K\}\cup\{K'\in\ca_{k} : K'\subseteq K\}$, and $\nu_K=\mu(K)^{-1} \mu\lvert_K$. Let $(Y,\cg,\nu)$ be the product probability space of the spaces $(Y_K,\cg_K,\nu_K)_{K\in\ca}$. We recall that the product space $Y$ is the Cartesian product $Y=\prod_{K\in\ca} Y_K$,  the product $\sigma$-algebra $\cg$ (in the case of a countable index set) is the collection
$$
\cg:=\Big\{ \prod_{K\in\ca}^* G_K : G_K\in\cg_K \Big\},
$$
and the product measure $\nu$ is the unique measure on $(Y,\cg)$ that satisfies
$$
\nu( \prod_{K\in\ca}^* G_K)=\prod_{K\in\ca}^* \nu_K(G_K),
$$ 
where $*$ in the product indicates that for only finitely many $G_K$ we have $G_K \neq Y_K$. 

Next, we prove that the sequence $(\ldots,u_k,u_{k+1/2},u_{k+1},u_{k+1+1/2},\ldots)$ defined by
$$
u_k(x,y):=\sum_{K\in\ca_{k-1}} u_K(x,y_K),
$$
and
$$
v_k(x,y):=u_{k+1/2}(x,y):=\sum_{K\in\ca_{k-1}} v_K(x,y_K)
$$
 is a martingale difference sequence in the measure space $(X\times Y, \cf\times\cg, \mu\times\nu)$.
Proving this is based on the following observations:
\begin{itemize}
  \item[(a)] For each $K\in\ca_{k-1}$, the function $u_K(x,y_K)$ depends on $x$ and $y_K$ ``in a symmetric way'' (see Figure \ref{fig:decoupling});
   \item[(b)] For each $K\in\ca_{k-1}$, the function $v_K(x,y_K)$ depends on $x$ and $y_K$ ``in an anti-symmetric way'' (see Figure \ref{fig:decoupling});
  \item[(c)] The function $u_K$ averages to zero on the set $K\times K$ because $d_k$ itself is a martingale difference; Indeed,
\begin{equation*}\label{eq:mdsUk}
\begin{split}
 &\int_K\int_K u_K(x,y_K)\dmu(x)\dnu_K(y_K)  \\
  &=\frac{1}{\mu(K)}\Big[\sum_{\substack{A\in\ca_k\\ A\subseteq K}}\ave{d_k}_A \mu(A)\mu(A)
    +\sum_{\substack{A,B\in\ca_k\\ A,B\subseteq K\\ A<B}}\frac{1}{2}(\ave{d_k}_A+\ave{d_k}_B) \cdot2\mu(A)\mu(B)\Big] \\
  &=\frac{1}{\mu(K)}\sum_{\substack{A\in\ca_k\\ A\subseteq K}}\ave{d_k}_A \mu(A)\sum_{\substack{B\in\ca_k\\ B\subseteq K}}\mu(B) 
   =\sum_{\substack{A\in\ca_k\\ A\subseteq K}}\ave{d_k}_A \mu(A)
   =\int_K d_k(x)\dmu(x) =0.
\end{split}
\end{equation*}
  \item[(d)] The function $v_K$ takes equal positive and negative values on two halves of the symmetric sets $A\times B\cup B\times A$ with $A<B$, whereas the function $u_K$ takes equal values on both the halves. Moreover, the function $v_K$ takes zero value on the symmetric sets $A\times A$. Thus, for any function $\phi(u_K)$, we have
\begin{equation*}
\begin{split}
&\int_{K\times K} v_K \phi(u_K) \dmu\dnu_K\\
&=\sum_{\substack{A,B\in\ca_k\\ A,B\subseteq K; A<B}} \bigg(\int_{A\times B}v_K\dmu \dnu_K+\int_{B\times A}v_K\dmu\dnu_K\bigg)  \langle \phi(u_K)\rangle_{A\times B\cup B\times A }^{\mu\times\nu_K}=0,
\end{split}
\end{equation*}
where the average $\langle \phi(u_K)\rangle_{A\times B\cup B\times A }^{\mu\times\nu_K}$ denotes the constant value of $\phi(u_K)$ on the set $A\times B\cup B\times A$.
\end{itemize}
We define the filtration $(\cu_k)_{k\in\frac{1}{2}\bz}$  as follows. For each $k\in\bz$, we define the $\sigma$-algebra $\cu_k$ as the $\sigma$-algebra generated by the functions  $\{u_l\}_{l:l\leq k}$ and $\{1_K\}_{K\in\ca_{k-1}}$, and, similarly, the $\sigma$-algebra $\cu_{k+1/2}$ as the $\sigma$-algebra generated by the functions  $\{u_l\}_{l:l\leq k+1/2}$ and $\{1_K\}_{K\in\ca_{k-1}}$. We note that the functions $(1_K)_{K\in\ca_{k-1}}$ are included for technical reasons: They ensure that each $\cu_{k}$, with $k\in\frac{1}{2}\bz$, is $\sigma$-finite so that taking the conditional expectation with respect to it makes sense. Now, by definition, each $u_k$ is measurable with respect to $\cu_k$, and, furthermore, $(\cu_k)_{k\in\frac{1}{2}\bz}$ is a filtration. Next, we check that $\be[u_k|\,\cu_{k-1/2}]=0$, which is equivalent to checking that
$$
\int_{X\times Y} u_k\phi\big(\{u_l\}_{l:l\leq k-1/2},\{1_K\}_{K\in\ca_{[k-1/2]-1}}\big)\dmu\dnu=0
$$
for all functions $\phi(\ldots,u_l,\ldots,u_{k-1/2}, 1_{K},\ldots)=:\phi\big(\{u_l\}_{l:l\leq k-1/2},\{1_K\}_{K\in\ca_{[k-1/2]-1}}\big)$.

First, we check that $\be[u_k|\cu_{k-1/2}]=0$ for $k\in\bz$. We have
\begin{equation*}
\begin{split}
&\int_{X\times Y} u_k\phi\big(\{u_l\}_{l:l\leq k-1/2},\{1_K\}_{K\in\ca_{k-2}}\big)\dmu\dnu\\
&=\sum_{K\in\ca_{k-1}} \int_{K\times Y} u_K\phi\big(\{u_l\}_{l:l \leq k-2},\{1_K\}_{K\in\ca_{k-2}}\big) \dmu\dnu.
\end{split}
\end{equation*}
We note that each of the functions $\{u_l(x,y)\}_{l:l\leq k-1/2}$, and $\{1_{K'}(x)\}_{K'\in\ca_{k-2}}$ is constant with respect to $x\in K\in\ca_{k-1}$; We denote these constant values by their averages. Moreover, $u_K(x,y)$ depends on $y$ only via $y_K$. Therefore, by pulling out the constant, and integrating out the independent variables, we obtain
\begin{equation*}
\begin{split}
&\int_{K\times Y} u_K\phi\big(\{u_l\}_{l:l\leq k-1/2},\{1_K\}_{K\in\ca_{k-2}}\big) \dmu\times\dnu\\
&=\int_{K\times Y_K} u_K \dmu \dnu_K  \\
&\cdot\int_{\prod_{\substack{K'\in\ca:\\K'\neq K}}Y_{K'}}  \phi\big(\{\langle u_l \rangle_K^\mu\}_{l:l\leq k-1/2},\{\langle 1_{K'} \rangle_K^\mu\}_{K'\in\ca_{k-2}}\big) \prod_{\substack{K'\in\ca:\\K'\neq K}}\dnu_{K'}.
\end{split}
\end{equation*}
The observation (c) states that $\int_{K\times Y_K} u_K \dmu\dnu_K=0$.

Finally, we check that $\be[u_{k+1/2}|\cu_{k}]=0$ for $k\in\bz$. Again, we have
\begin{equation*}
\begin{split}
&\int_{X\times Y} u_{k+1/2}\phi\big(\{u_l\}_{l:l\leq k},\{1_K\}_{K\in\ca_{k-1}}\big)\dmu\dnu\\
&=\sum_{K\in\ca_{k-1}} \int_{K\times Y} v_K\phi\big(\{u_l\}_{l:l\leq k},\{1_K\}_{K\in\ca_{k-1}}\big) \dmu\dnu.
\end{split}
\end{equation*}
We note that each of the functions $\{u_l(x,y)\}_{l:l\leq {k-1}}$, and $\{1_{K'}(x)\}_{K'\in\ca_{k-1}}$ is constant with respect to $x\in K\in\ca_{k-1}$; We denote these constant values by their averages. Furthermore, $u_k(x,y)=u_K(x,y)+ \sum_{\substack{K'\in\ca_{k-1}:\\K'\neq K}}u_{K'}(x,y)=u_K(x,y)$ for $x\in K\in\ca_{k-1}$. Moreover, $v_K(x,y)$ depends on $y$ only via $y_K$. Therefore, again by pulling out the constant, and integrating out the independent variables, we obtain
\begin{equation*}
\begin{split}
&\int_{K\times Y} v_K\phi\big(\{u_l\}_{l:l\leq k},\{1_K\}_{K\in\ca_{k-1}}\big) \dmu \dnu\\
&=\int_{K\times Y_K} v_K \Phi_K (u_K)\dmu \dnu_K,
\end{split}
\end{equation*}
where 
\begin{equation*}
\begin{split}
&\Phi_K (u_K):=\int_{\prod_{\substack{K'\in\ca:\\K'\neq K}}Y_{K'}} \phi\big(\{\langle u_l \rangle^\mu_K\}_{l:l\leq k-1}, \{u_K\} ,\{\langle 1_{K'} \rangle^\mu_K\}_{K'\in\ca_{k-1}}\big)  \prod_{\substack{K'\in\ca:\\K'\neq K}}\dnu_{K'}.
\end{split}
\end{equation*}
The observation (d) states that $\int_{K\times Y_K} v_K \Phi_K (u_K)\dmu \dnu_K=0$.
\end{proof}

\section{Vector-valued dyadic shifts are bounded}\label{section_dyadicshifts}
%
%

Let $L:=\max\{i,j\}+1$.  By picking every $L$:th length scale, we decompose the collection $\cd$ of dyadic cubes to subcollections $\mathcal{D}_{l \;\mathrm{mod} \,L}$, with $l=0,\ldots,L-1$, such that for every $K\in \mathcal{D}_{l \;\mathrm{mod} \,L}$ we have that both $D^i_Kf$ and $D^j_Kg$ are constant on $K'\in \ch_{\mathcal{D}_{l \;\mathrm{mod} \,L}}(K)$ and have zero average on $K$. More specifically, for each $l=1,\ldots,L-1$, let $\mathcal{D}_{l \;\mathrm{mod} \,L}=\bigcup_{k=-\infty}^\infty \{K\in \cd : \ell(K)=2^{-k L+l} \}$.
Then
\begin{equation*}
\begin{split}
&S^{ji}f=\sum_{K\in\cd} D^j_K A^{ji} D^i_Kf =\sum_{l=0}^{L-1} \sum_{K\in\mathcal{D}_{l \;\mathrm{mod} \,L}}  D^j_K A^{ji} D^i_Kf. \\
\end{split}
\end{equation*}


This decomposition is done in order to decouple by using Theorem \ref{martingaledecoupling}. From now on we consider a fixed $l$. We write $e_K:=D^j_K A^{ji} D^i_Kf$. Let $\mathrm{d}\nu_K(x)=\lvert K \rvert^{-1} 1_K(x)\mathrm{d}x$ be the Lebesgue measure restricted and normalized to the dyadic cube $K$. Let $\nu$ denote the product measure $\bigtimes_{K\in\ca}\nu_K$ on the product space $Y:=\Pi_{K\in\ca} K$. By Theorem \ref{martingaledecoupling},
\begin{equation*}
\begin{split}
&\norm{\sum_{K\in\mathcal{D}_{l \;\mathrm{mod} \,L}} 1_K(x)e_K(x)}^p_{L^p(\mathrm{d}x;E)}\\
&\leq \beta_p(E)^p \be \norm{\sum_{K\in\mathcal{D}_{l \;\mathrm{mod} \,L}} \epsilon_K 1_K(x)e_K(y_K)}^p_{L^p(\mathrm{d}x\times\mathrm{d}\nu(y);E)}.
\end{split}
\end{equation*}
We write $e_K(y_K)=D^j_K  A_K D^i_K f(y_K)=:D^j_K g_K(y_K)$. By using Lemma \ref{independentfunctions} together with the fact that $D^j_K$ is a difference of two conditional expectations, we obtain
\begin{equation*}
\begin{split}
&\norm{\epsilon_K 1_K(x) \sum_{K\in\mathcal{D}_{l \;\mathrm{mod} \,L}} D^j_K g_K(y_K) }^p_{L^p(\mathrm{d}\nu(y);E)}\\
&\leq 2^p  \norm{\sum_{K\in\mathcal{D}_{l \;\mathrm{mod} \,L}}  \epsilon_K 1_K(x)g_K(y_K)}^p_{L^p(\mathrm{d}\nu(y);E)}.
\end{split}
\end{equation*}
We write $g_K(y_K)=A_K D^i_K f(y_K)=: A_K f_K(y_K)$. By introducing an independent copy $(\tilde{Y},\tilde{\nu})$ of the probability space $(Y,\nu)$, we write
\begin{equation*}
\begin{split}
A_K f_K(y_K)&=\frac{1_K(y_K)}{\lvert K \rvert}\int_K a_K(y_K,x') f_K(x')\mathrm{d}x'\\
&=\int_{\tilde{Y}} 1_K(y_K)a_K(y_K,\tilde{y}_K) f_K(x')\mathrm{d}\tilde{\nu}(\tilde{y}).
\end{split}
\end{equation*}
By Jensen's inequality,
\begin{equation*}
\begin{split}
&\Big\lvert\int_{\tilde{Y}} \sum_{K\in\mathcal{D}_{l \;\mathrm{mod} \,L}}\epsilon_ K 1_K(x) a_K(y_K,\tilde{y}_K) f_K(\tilde{y}_K)\,\mathrm{d}\tilde{\nu}(\tilde{y})\Big\rvert_E^p\\
&\leq \int_{\tilde{Y}}  \norme{\sum_{K\in\mathcal{D}_{l \;\mathrm{mod} \,L}} \epsilon_ K 1_K(x) a_K(y_K,\tilde{y}_K) f_K(\tilde{y}_K)}^p \,\mathrm{d}\tilde{\nu}(\tilde{y}).
\end{split}
\end{equation*}
Since the family of operators $ \{a_K(x,x') : K\in\cd,x\in K, x'\in K \}$ is $R$-bounded, we have
$$
\be \norme{\sum_{K\in\mathcal{D}_{l \;\mathrm{mod} \,L}}\epsilon_ K 1_K(x) a_K(y_K,\tilde{y}_K) f_K(\tilde{y}_K)}^p \leq \mathcal{R}_p(\{a\})^p \,\be \norme{\sum_{K\in\mathcal{D}_{l \;\mathrm{mod} \,L}}\epsilon_ K 1_K(x) f_K(\tilde{y}_K)}^p.
$$
Altogether, we have obtained that
\begin{equation*}
\begin{split}
&\norm{\sum_{K\in\mathcal{D}_{l \;\mathrm{mod} \,L}} 1_K(x)e_K(x)}^p_{L^p(\mathrm{d}x;E)}\\
&\leq (2\beta_p(E)  \mathcal{R}_p(\{a\}))^p \int_{Y} \be \norm{ \sum_{K\in\mathcal{D}_{l \;\mathrm{mod} \,L}}\epsilon_K 1_K(x)D^i_K f(\tilde{y}_K)}^p_{L^p(\mathrm{d}x\times \mathrm{d}\tilde{\nu}(\tilde{y});E)} \mathrm{d}\nu(y)\\
&=(2\beta_p(E)  \mathcal{R}_p(\{a\}))^p \be \norm{ \sum_{K\in\mathcal{D}_{l \;\mathrm{mod} \,L}}\epsilon_K 1_K(x)D^i_K f(\tilde{y}_K)}^p_{L^p(\mathrm{d}x\times \mathrm{d}\tilde{\nu}(\tilde{y});E)}.
\end{split}
\end{equation*}
Since $D^i_KD^m_K=0$ whenever $i\neq m$, we can write $D^i_K=D^i_K \sum_{m=0}^{L-1} D^m_K$. By using Lemma \ref{independentfunctions} together with the fact that $D^j_K$ is a difference of two conditional expectations, we obtain 
\begin{equation*}
\begin{split}
&\be \norm{ \sum_{K\in\mathcal{D}_{l \;\mathrm{mod} \,L}}\epsilon_K 1_K(x)D^i_K \sum_{m=0}^{L-1} D^m_K f(\tilde{y}_K)}^p_{L^p(\mathrm{d}x\times \mathrm{d}\tilde{\nu}(\tilde{y});E)}\\
&\leq 2^p\be \norm{ \sum_{K\in\mathcal{D}_{l \;\mathrm{mod} \,L}}\epsilon_K 1_K(x)\sum_{m=0}^{L-1} D^m_K f(\tilde{y}_K)}^p_{L^p(\mathrm{d}x\times \mathrm{d}\tilde{\nu}(\tilde{y});E)}
\end{split}
\end{equation*}

We have that $\sum_{m=0}^{L-1} D^m_K f$ is constant on $K'\in \ch_{\mathcal{D}_{l \;\mathrm{mod} \,L}}(K)$ and has zero average on $K$. Therefore, by removing the decoupling using Theorem \ref{martingaledecoupling}, we obtain
\begin{equation*}
\begin{split}
&\be \norm{ \sum_{K\in\in\mathcal{D}_{l \;\mathrm{mod} \,L}}\epsilon_K 1_K(x)\sum_{m=0}^{L-1} D^m_K f(\tilde{y}_K)}^p_{L^p(\mathrm{d}x\times \mathrm{d}\tilde{\nu}(\tilde{y});E)}\\
&\leq \norm{\sum_{K\in\in\mathcal{D}_{l \;\mathrm{mod} \,L}} \sum_{m=0}^{L-1} D^i_K f(x)}^p_{L^p(\mathrm{d}x;E)} = \norm{\sum_{K\in\cd}D_K f}_{L^p(\br^d;E)}^p =\norm{f}_{L^p(\br^d;E)}^p.
\end{split}
\end{equation*}
The proof is completed.

\section{Sufficient condition for the boundedness of dyadic paraproducts}\label{section_paraproduct}
From the fact that $\normlp{\langle f \rangle_{Q_0}1_{Q_0}}\to 0$ as $\ell(Q_0)\to \infty$, it follows that the functions of the form $f:=\sum_{Q_0} f_{Q_0}:=\sum_{Q_0} (f-\langle f \rangle_{Q_0})1_{Q_0}$, where $Q_0$ are disjoint dyadic cubes, are dense in $L^p(E)$. Hence it suffices to prove the estimate  
$$
\norm{\sum_{Q\in \cd : Q\subseteq Q_0} D_Qb\, \langle f \rangle_Q }_{L^p(\br^d;E)}\leq 6\cdot 2^{d} pp'\beta_p(E)^2 \beta_p(\ct) \norm{b}_{\text{BMO}_p(\br^d;\ct)} \norm{f}_{L^p(\br^d;E)}
$$
uniformly for all $Q_0\in\cd$. Now, we fix a dyadic cube $Q_0$. Let $\cd(Q_0):=\{Q\in \cd : Q\subseteq Q_0\}$. Let $\cs:=\cs(Q_0)\subseteq \cd(Q_0)$ be a sparse collection that contains the cube $Q_0$. For each $Q\in\cd$, let $\pis(Q)$ denote the minimal dyadic cube $S\in\cs$ such that $S\supseteq Q$. We rearrange the summation as $\sum_{Q\in\cd(Q_0)}=\sum_{S\in\cs} \sum_{\substack{Q\in\cd(Q_0) :\\ \pi(Q)=S}}$.  By the variant of Pythagoras' theorem, Theorem \ref{pythagorassparse}, we obtain
\begin{equation*}
\begin{split}
&\normlp{\sum_{Q\in\cd(Q_0)} D_Qb\, \langle f \rangle_Q }=\normlp{\sum_{S\in\cs}  \sum_{\substack{Q\in\cd(Q_0) :\\ \pi(Q)=S}} D_Qb\,\langle f \rangle_Q }\\
&\leq 3 p \big(\sum_{S\in\cs}  \normlp{ \sum_{\substack{Q\in\cd(Q_0) :\\ \pi(Q)=S}} D_Qb \langle f \rangle_Q}^p\big)^{1/p}.
\end{split}
\end{equation*}
It remains to choose the sparse collection $\cs$ so that 
\begin{equation}\label{aim}\normlp{ \sum_{\substack{Q\in\cd(Q_0) :\\ \pi(Q)=S}} D_Qb \langle f \rangle_Q}\leq C_{b,E,p,d} \langle \norme{f} \rangle_S \lvert S \rvert^{1/p},
\end{equation} 
which, by the special case of the dyadic Carleson embedding theorem, Lemma \ref{carlesonembedding}, completes the proof by the estimate
\begin{equation*}
\begin{split}
\big(\sum_{S\in\cs}  \normlp{ \sum_{\substack{Q\in\cd(Q_0) :\\ \pi(Q)=S}} D_Qb \langle f \rangle_Q}^p\big)^{1/p} \leq C_{b,E,p,d} \big(\sum_{S\in\cs} \langle \norme{f} \rangle_S^p \lvert S \rvert \big)^{1/p}\leq  C_{b,E,p,d} 2 p' \normlp{f}.
\end{split}
\end{equation*}

Next, we choose the collection $\cs$ so that the estimate \eqref{aim} is satisfied. For each $S\in\cd$, let $\chs(S)$ be the collection of all the maximal dyadic subcubes $S'\subsetneq S$ such that
\begin{equation}\label{stoppingeq}
\langle \norme{f} \rangle_{S'}> 2 \langle \norme{f} \rangle_S.
\end{equation}By the dyadic nestedness and maximality, the collection $\chs(S)$ is pairwise disjoint.
We define recursively $\cs_0:=\{Q_0\}$  and $\cs_{n+1}:=\bigcup_{S\in\cs_n} \chs(S)$. Let $\cs:=\bigcup_{n=0}^\infty \cs_n$. We define the pairwise disjoint sets $\es(S):=S\setminus \bigcup_{S'\in\chs(S)}S'$. By construction,
$$
\sum_{S'\in\chs(S)} \lvert S'\rvert \leq  \frac{1}{2} \lvert S \rvert,
$$ 
which is to say that $\lvert \es(S) \rvert \geq \frac{1}{2} \lvert S \rvert$. Hence the collection $\cs$ is sparse. 

Next, we check that $\int_Qf\dx= \int_Q f_S \dx$ for $f_S:=f1_{\es(S)}+\sum_{S'\in\chs(S)} \langle f \rangle_{S'}1_{S'}$ whenever $\pi(Q)=S$. Firstly, the set $Q$ is partioned by $\es(S)\cap Q$ and $\{S'\in\chs(S): S'\cap Q\neq \emptyset\}$. Secondly, by the dyadic nestedness, $S'\cap Q\neq \emptyset$ implies that either $Q\subseteq S'$ or $S'\subsetneq Q$. The alternative $Q\subseteq S'$ is excluded because $\pis(S)=Q$ means that $S$ is the minimal $S''\in\cs$ such that $Q\subseteq S''$. Hence $S'\subsetneq Q$ for all $S'\in\chs(S)$ with $S'\cap Q\neq \emptyset$. Therefore 
\begin{equation*}
\begin{split}
&\int_Qf\dx=\int_Q f1_{\es(S)}+\sum_{S'\in\chs(S):S'\subseteq Q} \int_{S'}  f \dx\\
&=\int_Q f1_{\es(S)}+\sum_{S'\in\chs(S):S'\subseteq Q}\int_{Q} \langle f \rangle_{S'} 1_{S'}\dx=\int_Q f_S \dx.
\end{split}
\end{equation*}

Next, we check that $\norme{f_S}\leq 2\cdot 2^{d} \langle \norme{f} \rangle_S$ almost everywhere. First, let $x\in\es(S)$. Then, by construction, for all $Q\in\cd$ such that $Q\ni x$ we have $ \langle \norme{f} \rangle_Q\leq 2 \langle \norme{f} \rangle_S$. Therefore, by the Lebesgue differentiation theorem, $\norme{f(x)}\leq 2 \langle \norme{f} \rangle_S$ for almost every such $x$. Let $S'\in\ch(S)$. By the maximality of $S'$, the dyadic parent $\hat{S'}$ of $S'$ satisfies the opposite $\langle \norme{f} \rangle_{\hat{S'}}\leq 2 \langle \norme{f} \rangle_S$ of the inequality \eqref{stoppingeq}. By doubling, $\langle \norme{f} \rangle_{S'}\leq 2^d \langle \norme{f} \rangle_{\hat{S'}}$. Altogether, $\langle \norme{f} \rangle_{S'}\leq 2\cdot 2^d \langle \norme{f} \rangle_{S'}$.

Altogether, we have that
$$
\normlp{ \sum_{\substack{Q\in\cd(Q_0) :\\ \pi(Q)=S}} D_Qb \langle f \rangle_Q}=\normlp{ \sum_{\substack{Q\in \cd(Q_0) :\\ \pi(Q)=S}} D_Qb \langle f_S \rangle_Q}
$$
with $\norm{f_S}_{L^\infty(E)}\leq 2\cdot 2^d \langle \norme{f} \rangle_S$. The proof is completed by Lemma \ref{linfinityestimate}. 
\begin{lemma}\label{linfinityestimate} Let $1<p<\infty$. Let $E$ be a UMD space.  Assume that $\ct$ is a UMD subspace of $\cl(E)$. Let $S$ be a dyadic cube and let $\cq(S)$ be a collection of dyadic subcubes of $S$. Then
$$
\norm{ \sum_{\substack{Q\in\cq(S)}} D_Qb \langle f \rangle_Q}_{L^p(E)}\leq \beta_p(E)^2 \beta_p(\ct) \norm{b}_{\text{BMO}_p(\ct)} \norm{f}_{L^\infty(S;E)} \lvert S \rvert^{1/p}
$$
for any $f\in L^\infty(S;E)$ and $b\in\text{BMO}_p(\br^d;\ct)$.
\end{lemma}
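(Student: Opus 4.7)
The strategy is to reinterpret the (partial) paraproduct as an $E$-valued martingale difference sequence and then chain three ingredients that each contribute one factor of the target constant $\beta_p(E)^2\beta_p(\ct)$: the UMD property of $E$ (used twice, once directly and once through Stein's inequality), the pointwise estimate $|f|_E\le\|f\|_{L^\infty(S;E)}$, and the UMD property of $\ct$ applied to the martingale structure of $b$.

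Let $\cf_k$ denote the $\sigma$-algebra on $S$ generated by its dyadic subcubes of side-length $2^{-k}\ell(S)$, and write $T_k:=\Exp[\,\cdot\,|\,\cf_k]$, $b_k:=T_kb$, $f_k:=T_kf$, $d_kb:=b_k-b_{k-1}$, $d_kf:=f_k-f_{k-1}$. Regrouping by scale,
\begin{equation*}
\sum_{Q\in\cq(S)} D_Qb\,\langle f\rangle_Q = \sum_{k\ge 1} g_k\,f_{k-1}, \qquad g_k := \phi_{k-1}\,d_kb,
\end{equation*}
where $\phi_{k-1}$ is the $\cf_{k-1}$-measurable $\{0,1\}$-indicator of $\bigcup\{Q\in\cq(S):\ell(Q)=2^{-(k-1)}\ell(S)\}$. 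Since $\Exp[g_k|\cf_{k-1}]=0$, the sequence $(g_kf_{k-1})_k$ is an $E$-valued martingale difference sequence.

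Applying UMD of $E$ gives $\|\sum_k g_kf_{k-1}\|_{L^p(E)}\le\beta_p(E)\,\Exp_\epsilon\|\sum_k\epsilon_kg_kf_{k-1}\|_{L^p(E)}$. Using the $\cf_k$-measurability of $g_k$, the Leibniz-type identity $g_kf_{k-1}=T_k(g_kf)-g_k\,d_kf$ splits the Rademacher sum into a principal and a diagonal piece. The principal piece is controlled via the $R$-boundedness of $\{T_k\}$ from Proposition \ref{steininequality} (factor $\beta_p(E)$):
\begin{equation*}
\Exp_\epsilon\Big\|\sum_k\epsilon_k T_k(g_kf)\Big\|_{L^p(E)}\le \beta_p(E)\,\Exp_\epsilon\Big\|\sum_k\epsilon_k g_kf\Big\|_{L^p(E)}.
\end{equation*}
Since $f$ does not depend on $k$, one factors pointwise $\sum_k\epsilon_kg_k(x)f(x)=(\sum_k\epsilon_kg_k(x))\,f(x)$ and uses the operator-vector estimate $|Av|_E\le|A|_\ct|v|_E$ together with $|f|_E\le\|f\|_{L^\infty(S;E)}$ to reduce to $\Exp_\epsilon\|\sum_k\epsilon_kg_k\|_{L^p(S;\ct)}$. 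The latter is the $\ct$-valued martingale transform $\sum_k(\epsilon_k\phi_{k-1})\,d_kb$ with predictable $[-1,1]$-valued multipliers, so UMD of $\ct$ (factor $\beta_p(\ct)$) gives $\Exp_\epsilon\|\sum_k\epsilon_kg_k\|_{L^p(S;\ct)}\le\beta_p(\ct)\|b-\langle b\rangle_S\|_{L^p(S;\ct)}\le\beta_p(\ct)\|b\|_{\operatorname{BMO}_p(\br^d;\ct)}|S|^{1/p}$. Multiplying through produces the principal bound with the target constant $\beta_p(E)^2\beta_p(\ct)$.

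The diagonal remainder $\Exp_\epsilon\|\sum_k\epsilon_kg_k\,d_kf\|_{L^p(E)}$ is treated along the same lines: $d_kf$ is itself an $E$-valued martingale difference of $f$, pointwise bounded by $2\|f\|_{L^\infty(S;E)}$. Decomposing $g_kd_kf=(g_kd_kf-\Exp[g_kd_kf|\cf_{k-1}])+\Exp[g_kd_kf|\cf_{k-1}]$ produces an $E$-valued martingale difference sequence plus a $\cf_{k-1}$-predictable piece; a further application of UMD of $E$ on the first, combined with a cube-by-cube estimate on the second using the local BMO bound on $b$ and the $L^\infty$ bound on $d_kf$, yields an estimate of the same form and is absorbed into the constant.

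\textbf{Main obstacle.} The delicate point is precisely this diagonal remainder: unlike the principal term, both factors of $g_kd_kf$ depend on $k$, so the $L^\infty$-norm of $f$ cannot be factored out of the Rademacher sum as a uniform scalar bound. The obstruction is the non-commutation of operator multiplication by $d_kb$ with the conditional expectation $T_{k-1}$, and this is exactly what forces the quadratic dependence $\beta_p(E)^2$ in the final constant.
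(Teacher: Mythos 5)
Your overall scaffolding (insert random signs, use the vector-valued Stein inequality to pull out the averaging of $f$, then use $\text{UMD}(\ct)$ on the remaining $b$-martingale) is structurally parallel to the paper's, but the Leibniz split $g_kf_{k-1}=T_k(g_kf)-g_kd_kf$ is a real departure, and it produces a diagonal remainder $\sum_k\epsilon_kg_kd_kf$ that the argument does not close. Your sketch for it (split $g_kd_kf$ into the martingale difference $g_kd_kf-\Exp[g_kd_kf|\cf_{k-1}]$ plus the predictable part $\Exp[g_kd_kf|\cf_{k-1}]$, then ``absorb into the constant'') does not yield a bound: each piece retains a product of two $k$-dependent factors, so exactly the same difficulty recurs, and a nonzero extra term cannot be ``absorbed'' into the precise constant $\beta_p(E)^2\beta_p(\ct)$ the Lemma asserts. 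Your closing remark that this obstruction ``forces the quadratic dependence $\beta_p(E)^2$'' is also off: the principal term alone already costs one $\beta_p(E)$ for the initial randomization and another $\beta_p(E)$ for Stein; the diagonal is an additional, unresolved contribution.

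The paper avoids the Leibniz problem at the source, in two ways. In the proof without decoupling, after inserting the signs $\epsilon_Q$, it expands $D_Qb=\sum_\eta\langle b,h_Q^\eta\rangle h_Q^\eta$ and uses that $h_Q^\eta(x)=\pm|h_Q^\eta(x)|$ pointwise, with $|h_Q^\eta|$ \emph{constant on all of $Q$}. Absorbing that deterministic sign into $\epsilon_Q$, the $Q$-summand becomes $\langle\langle b,h_Q^\eta\rangle|h_Q^\eta|\,f\rangle_Q 1_Q$, i.e.\ the conditional expectation at the scale of $Q$ applied to the $Q$-dependent function $\langle b,h_Q^\eta\rangle|h_Q^\eta|\,f$. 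Here the $b$-part $\langle b,h_Q^\eta\rangle|h_Q^\eta|$ is $\sigma(Q)$-measurable, so it commutes with $\langle\cdot\rangle_Q$ and there is no commutator term; one Stein application then frees $f$ pointwise, with nothing left over. In the proof with decoupling, replacing $D_Qb(x)$ by $D_Qb(y_Q)$ (with $y_Q$ an independent variable ranging over $Q$) makes the $b$-part a literal constant for the $\langle\cdot\rangle_Q$-average, with the same effect. Your grouping by scale lumps all of $D_Qb$ into the scale-$k$ difference $d_kb$, which is $\cf_k$- but not $\cf_{k-1}$-measurable, and this is precisely what forces the product rule. If you instead split $d_kb$ on each $Q$ at scale $k-1$ into its modulus part (constant on $Q$, hence $\cf_{k-1}$-measurable) and its sign part (absorbed into the random signs), you recover the paper's argument and the diagonal disappears.
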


\begin{proof}[Proof without the decoupling equality]By the UMD property and the Kahane contraction principle, we obtain
$$
\norm{\sum_{Q\in \cq(S)} D_Qb \langle f \rangle_Q}_{L^p(E)}^p\leq \beta_p(E)^p \be \norm{\sum_{Q: Q \subseteq S} \epsilon_Q D_Qb \langle f \rangle_Q 1_Q}_{L^p(E)}^p. 
$$ 
We expand
$$
D_Qb=\sum_{\eta\in\{0,1\}^d\setminus\{0\}} \langle b, h_Q^\eta\rangle h_Q^\eta, 
$$
where, for each $Q=I_1\times\cdots\times I_d$ and $\eta=(\eta_1,\ldots,\eta_d)\in \{0,1\}^d$, we have $h^\eta_Q=h_{I_1}^{\eta_1}\cdots h_{I_d}^{\eta_d}$ with $h_I:=h^1_I:=\frac{1}{\sqrt{\lvert I \rvert}} (1_{I_{\text{left}}}-1_{I_{\text{right}}})$ and $h_I^0=\frac{1}{\sqrt{\lvert I \rvert}}1_I$.  Therefore
$$
\big(\be \norm{\sum_{Q\subseteq S} \epsilon_Q D_Qb \langle f \rangle_Q 1_Q}_{L^p(E)}^p\big)^{1/p}\leq  \sum_{\eta\in\{0,1\}^d\setminus\{0\}} \big(\be \norm{\sum_{Q\subseteq S} \epsilon_Q h^\eta_Q \langle \langle b, h^\eta_Q \rangle f \rangle_Q 1_Q}_{L^p(E)}^p\big)^{1/p}.
$$

Next, we consider a fixed $\eta$. We observe that, at each point $x\in \br^d$, we have $h_Q^\eta(x)=\pm \lvert h_Q^\eta(x) \rvert$ and that $\lvert h_Q^\eta \rvert$ is constant on $Q$. Hence
\begin{equation*}
\begin{split}
&\be \norm{\sum_{Q\subseteq S} \epsilon_Q h^\eta_Q \langle \langle b, h^\eta_Q \rangle f \rangle_Q 1_Q}_{L^p(E)}^p
= \be \norm{\sum_{Q\subseteq S} \epsilon_Q  \langle \langle b, h^\eta_Q \rangle \lvert h^\eta_Q \rvert f \rangle_Q 1_Q}_{L^p(E)}^p.
\end{split}
\end{equation*}
By the vector-valued Stein inequality, and the observation that, at each point $x\in \br^d$, we have $h_Q^\eta(x)=\pm \lvert h_Q^\eta(x) \rvert$, we obtain
\begin{equation*}
\begin{split}
\be \norm{\sum_{Q\subseteq S} \epsilon_Q  \langle \langle b, h^\eta_Q \rangle \lvert h^\eta_Q \rvert  f \rangle_Q 1_Q}_{L^p(E)}^p&\leq \beta_p(E)^p \norm{\sum_{Q\subseteq S} \epsilon_Q \langle b, h^\eta_Q \rangle \lvert h^\eta_Q \rvert f }_{L^p(E)}^p\\
&=\beta_p(E)^p  \norm{\sum_{Q\subseteq S} \epsilon_Q \langle b, h^\eta_Q \rangle h^\eta_Q f }_{L^p(E)}^p
\end{split}
\end{equation*}
By assumption, we have $b:\br^d \to \ct$ with $\ct\subseteq \cl(E)$. By the pointwise norm estimate,
$$
\norm{\sum_{Q\subseteq S} \epsilon_Q \langle b, h^\eta_Q \rangle  h^\eta_Q f }_{L^p(E)}\leq \norm{f}_{L^\infty(E)} \norm{\sum_{Q\subseteq S} \epsilon_Q \langle b, h^\eta_Q \rangle  h^\eta_Q }_{L^p(\ct)}.
$$

We can view $\langle b, h^\eta_Q \rangle  h^\eta_Q$ as a subsequence of a martingale difference sequence (thanks to Emil Vuorinen for pointing this out!). We split $Q$ into two subsets $Q^\eta_+$ and $Q^\eta_-$ according to the value of $h^\eta_Q$,$$Q^\eta_+:=\bigcup_{\substack{Q'\in\ch(Q) : \\ \langle h_Q^\eta\rangle_{Q'}=+\lvert Q \rvert^{-1/2}}}Q' \quad\text{ and }\quad Q^\eta_-:=\bigcup_{\substack{Q'\in\ch(Q) : \\ \langle h_Q^\eta\rangle_{Q'}=-\lvert Q \rvert^{-1/2}}}Q'.$$ The corresponding martingale differences are
$$
U^\eta_Q b:=-\langle b \rangle_Q 1_Q+(\langle b \rangle_{Q_-^\eta} 1_{Q_-^\eta}+\langle b \rangle_{Q^\eta_+} 1_{Q^\eta_+}),$$
and
$$
V^\eta_Q b:=-(\langle b \rangle_{Q_-^\eta} 1_{Q_-^\eta}+\langle b \rangle_{Q^\eta_+} 1_{Q^\eta_+})+\sum_{Q'\in\ch(Q)}\langle f \rangle_{Q'}1_{Q'}.
$$
By construction, $D_Qb=U_Q^\eta b+V_Q^\eta b$ and $U_Q^\eta b=\langle b, h^\eta_Q \rangle h^\eta_Q$.  Hence, for any signs $\epsilon_Q$, we have
\begin{equation*}
\begin{split}
&\norm{\sum_{Q\subseteq S} \epsilon_Q \langle b, h^\eta_Q \rangle  h^\eta_Q }_{L^p(\ct)}=\norm{\sum_{Q\subseteq S} (\epsilon_Q U^\eta_Q b+0\cdot V^\eta_Q b) }_{L^p(\ct)}\\
& \leq \beta_p(\ct) \norm{\sum_{Q\subseteq S} (U^\eta_Q b+ V^\eta_Q b) }_{L^p(\ct)}=\beta_p(\ct) \norm{\sum_{Q\subseteq S} D_Qb}_{L^p(\ct)}.
\end{split}
\end{equation*}
We can expand $\sum_{Q\subseteq S} D_Qb=1_S(b-\langle b \rangle_S)$. By the definition of the BMO space,
$$
\norm{1_S(b-\langle f \rangle_S)}_{L^p(\ct)}\leq \norm{b}_{\text{BMO}_p(\ct)} \lvert S \rvert^{1/p}.
$$
\end{proof}
\begin{proof}[Proof with the decoupling equality]

By the decoupling equality, Theorem \ref{martingaledecoupling}, 
\begin{equation*}
\begin{split}
&\norm{ \sum_{Q\in\cq(S)} D_Qb \langle f \rangle_Q}^p_{L^p(\dx\dmu(y))}\\
&\leq \beta_p(E)^p \be \norm{ \sum_{Q\in\cq(S)} \epsilon_Q D_Qb(y_Q)1_Q(x) \langle f \rangle_Q}^p_{L^p(\dx\dmu(y))}.
\end{split}
\end{equation*}
Now, at each point $y_Q\in Q$, we have $ D_Qb(y_Q)\langle f \rangle_Q=\langle  D_Qb(y_Q) f \rangle_Q$. By the vector-valued Stein inequality, 
\begin{equation*}
\begin{split}
&\be \norm{ \sum_{\substack{Q\in\cd :\\ Q \subseteq S}} \epsilon_Q 1_Q(x) \langle D_Qb(y_Q)f \rangle_Q}^p_{L^p(\dx;E)}\\
&\leq \beta_p(E)^p\be \norm{ \sum_{\substack{Q\in\cd :\\ Q \subseteq S}} \epsilon_Q 1_Q(x) D_Qb(y_Q)f(x) }^p_{L^p(\dx \dmu(y);E)}.
\end{split}
\end{equation*}
By the pointwise norm estimate, 
\begin{equation*}
\begin{split}
&\be \norm{ \sum_{\substack{Q\in\cd :\\ Q \subseteq S}} \epsilon_Q 1_Q(x) D_Qb(y_Q)f(x) }^p_{L^p(\dx\dmu(y);E)}\\
&\leq \norm{f}_{L^\infty(E)}^p \be \norm{\sum_{\substack{Q\in\cd :\\ Q \subseteq S}} \epsilon_Q 1_Q(x) D_Qb(y_Q)}^p_{L^p(\dx \dmu(y);\ct)}.
\end{split}
\end{equation*}
By the decoupling equality, Theorem \ref{martingaledecoupling},
$$
\norm{\sum_{\substack{Q\in\cd :\\ Q \subseteq S}} \epsilon_Q 1_Q(x) D_Qb(y_Q)}^p_{L^p(\dx \dmu(y);\ct)}\leq \beta_p(\ct)^p \norm{ \sum_{\substack{Q\in\cd :\\ Q \subseteq S}} D_Qb(x) }^p_{L^p(\dx;\ct)}.
$$
\end{proof}
\begin{remark}In the scalar-valued setting, we obtain the following proof of the boundedness of the dyadic paraproduct: Let $\cs$ be the collection of dyadic cubes that is iteratively chosen by the condition $\langle \abs{f} \rangle_{S'}> 2 \langle \abs{f} \rangle_S$. Hence $\abs{\langle f \rangle_Q}\leq 2 \langle \abs{f} \rangle_S$ whenever $\pis(Q)=S$. From the variant of Pythagoras' theorem (Lemma \ref{pythagorassparse}), Burkholder's inequality, and the special case of the dyadic Carleson embedding theorem (Lemma \ref{carlesonembedding}), it follows that
\begin{equation*}
\begin{split}
\norm{\sum_Q  \langle f \rangle_Q D_Qb}_{L^p(\br^d;\br)}&\leq 3 p \Big( \sum_{S} \norm{\sum_{\pis(Q)=S} \langle f \rangle_Q D_Qb}_{L^p(\br^d;\br)}^p\Big)^{1/p}\\
&\leq 3p 2\beta_p(\br) \Big(\sum_{S} \langle \abs{f}\rangle_S ^p \norm{\sum_{Q: Q\subseteq S} D_Qb}_{L^p(\br^d;\br)}^p\Big)^{1/p}\\
&\leq  6p \beta_p(\br)  \norm{b}_{\text{BMO}_p(\br^d;\br)}\Big(\sum_{S} \langle \lvert f \rvert \rangle ^p \lvert S \rvert \Big)^{1/p}\\
&\leq 6p \beta_p(\br) \norm{b}_{\text{BMO}_p(\br^d;\br)} 2 p' \norm{f}_{L^p(\br^d;\br)}.
\end{split}
\end{equation*}
Note that $\beta_p(\br)=\max\{p,p'\}-1$, which was proven by Burkholder \cite{burkholder1984d}.
\end{remark}
\section{Vector-valued dyadic representation theorem}\label{section_representation}
The proof of the vector-valued dyadic representation theorem follows verbatim the proof of the scalar-valued one that is given in Hyt\"onen's lecture notes on the $A_2$ theorem \cite{hytonen2011}, except for the estimation of matrix elements: In the scalar-valued case, the absolute value of the matrix elements (which are real numbers) is estimated, whereas in the vector-valued case, the $R$-bound of the matrix elements (which are operators) needs to be estimated. For readability, we have sketched the whole proof here. 
\subsection{Expanding the dual pairing by means of dyadic shifts}\label{scalarproof}
By expanding $g\in\lpdual$ as $$g=\sum_{J\in\cd} D_J g=\sum_{J\in\mathcal{D}}\sum_{\eta=1}^{2^d-1}\langle g, h^\eta_J \rangle h_J^\eta,$$ where $ h_J^\eta$ with $\eta=1,\ldots,2^d-1$ and $J\in\cd$ are $L^2$-normalized Haar functions, and $f\in\lpdirect$ similarly, the dual pairing is written as
$$
\langle g, Tf \rangle = \sum_{\substack{I\in\mathcal{D},J\in\mathcal{D}}} \langle g,h_J \rangle \langle h_J,T h_I \rangle \langle h_I, f\rangle.
$$  
The index $\eta$ will be suppressed from now on.
To control the relative arrangement of $I$ and $J$ and whence the size of matrix elements, the notion of a good dyadic cube is introduced.
\begin{definition}[Good dyadic cube]\label{def_goodcube}
Fix a {\it boundary exponent} $\gamma\in (0,1)$ and an {\it ancestor threshold} $r\in\mathbb{N}$. A dyadic cube $I\in\mathcal{D}$ is {\it good} if we have
$$
\mathrm{dist}(I,K^c)>\left(\frac{\ell(I)}{\ell(K)}\right)^{\gamma}\ell(K)
$$
for every dyadic ancestor $K\in\mathcal{D}$ of the dyadic cube $I$ such that $\ell(K)\geq 2^r\ell(I)$. 
\end{definition}

To restrict to the good cubes in the dual pairing, the randomized dyadic systems are introduced.
Let $\mathcal{D}^0$ designate the standard dyadic system. For every parameter $(\omega_j)_{j\in\mathbb{Z}}\in (\{0,1\}^d)^{\mathbb{Z}}=:\Omega$ and every $I\in\mathcal{D}^0$, the translated dyadic cube $I\dot+ \omega$ is defined by $$
I\dot+ \omega:= I+\sum_{j: 2^{-j} < \ell(I)}2^{-j}\omega_j.
$$
For each $\omega\in\Omega$, the translated dyadic system $\mathcal{D}^\omega$ is defined by $\mathcal{D}^\omega:=\{I\dot+\omega : I\in\mathcal{D}^0\}. $ The parameter set is equipped with the natural probability measure: Each component $\omega_j\in \{0,1\}^d$ has an equal probability $2^{-d}$ of taking any of the $2^d$ values and all components are stochastically independent. By construction, the position and the goodness of a dyadic cube $I\dot+\omega$ are stochastically independent. Also by construction, the probability 
$
P_\omega(\{\text{$I\dot+\omega\in \mathcal{D}^\omega$ is good}\})=:\pi_\text{good}
$
does not depend on $I \in \mathcal{D}^0$, and, as calculated in \cite[Lemma 2.3]{hytonen2011},
\begin{equation*}\label{goodprobability}
\pi_\text{good}\geq 1- \frac{8d}{\gamma} 2^{-r\gamma}.
\end{equation*}
In particular, for any boundary exponent $\gamma\in(0,1)$ we can make the probability $\pi_\text{good}$ strictly positive by choosing the ancestor threshold $r\in\bn$ sufficiently large.

The following proposition was proven by Hyt\"onen \cite[Proposition 3.5]{hytonen2011}. (For an earlier version of the proposition, see \cite[Theorem 3.1]{hytonen2012a}.)

\begin{proposition}[Discarding the bad cubes]\label{scalaraveratingtrick}
Assume that $T:L^p(\mathbb{R}^d;\mathbb{R})\to L^p(\mathbb{R}^d;\mathbb{R})$ is bounded. Then
$$
\langle \beta, T\alpha \rangle =\frac{1}{\pi_\text{good}}\mathbb{E}_\omega \sum_{\substack{I\in\mathcal{D}^\omega,J\in\mathcal{D}^\omega:\\ \mathrm{smaller}\{I,J\}\text{ is good } }} \langle \beta,h_J \rangle \langle h_J,T h_I \rangle \langle h_I, \alpha\rangle
$$  
for all $\beta\in C^1_0(\mathbb{R}^d;\mathbb{R})$ and $\alpha\in C^1_0(\mathbb{R}^d;\mathbb{R})$. 

\end{proposition}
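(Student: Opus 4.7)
The plan is to combine the Haar expansion of the dual pairing with the stochastic independence of the ``position'' and the ``goodness'' of a dyadic cube. First, I would fix an arbitrary translation parameter $\omega\in\Omega$ and expand both $\beta$ and $\alpha$ in the $\mathcal{D}^\omega$-Haar basis. Since $\alpha,\beta\in C^1_0(\mathbb{R}^d;\mathbb{R})\subset L^p\cap L^{p'}$, the Haar series $\alpha=\sum_I \langle \alpha,h_I\rangle h_I$ and $\beta=\sum_J \langle \beta,h_J\rangle h_J$ converge in $L^p$ and $L^{p'}$ respectively by the standard convergence of dyadic martingales. Using the $L^p$-boundedness of $T$ together with the pairing with $\beta\in L^{p'}$, I obtain
\begin{equation*}
  \langle \beta, T\alpha\rangle = \sum_{I, J \in \mathcal{D}^\omega} \langle \beta, h_J\rangle \langle h_J, Th_I\rangle \langle h_I, \alpha\rangle.
\end{equation*}
The left-hand side does not depend on $\omega$, so averaging over $\omega$ preserves the identity.

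Next, I would re-index the sum on the right-hand side by pairs $(I_0,J_0)\in\mathcal{D}^0\times\mathcal{D}^0$ via $I=I_0\dot+\omega$ and $J=J_0\dot+\omega$, and insert the indicator $1_{\mathrm{smaller}\{I,J\}\text{ is good}}$. The matrix element $\langle \beta,h_J\rangle\langle h_J,Th_I\rangle\langle h_I,\alpha\rangle$ depends only on the \emph{positions} of $I$ and $J$, i.e., on the components $\omega_j$ with $2^{-j}<\max\{\ell(I_0),\ell(J_0)\}$. The event ``$\mathrm{smaller}\{I,J\}$ is good'' is determined by the relative position of the smaller cube with respect to its dyadic ancestors of side length at least $2^r\ell(\mathrm{smaller})$, which depends on an essentially disjoint set of $\omega$-coordinates. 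By the stochastic independence of position and goodness (the key content of \cite[Lemma~2.3]{hytonen2011}), conditioning on the coordinates that fix the positions of $I$ and $J$ leaves the goodness of the smaller cube with probability exactly $\pi_{\text{good}}$.

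Combining these two observations gives, after summation over $(I_0,J_0)$,
\begin{equation*}
\begin{split}
  \pi_{\text{good}}\,\langle \beta, T\alpha\rangle
  &= \pi_{\text{good}}\,\mathbb{E}_\omega \sum_{I, J \in \mathcal{D}^\omega} \langle \beta, h_J\rangle \langle h_J, Th_I\rangle \langle h_I, \alpha\rangle \\
  &= \mathbb{E}_\omega \sum_{\substack{I, J \in \mathcal{D}^\omega: \\ \mathrm{smaller}\{I,J\}\text{ is good}}} \langle \beta, h_J\rangle \langle h_J, Th_I\rangle \langle h_I, \alpha\rangle,
\end{split}
\end{equation*}
which is the claimed identity after dividing by $\pi_{\text{good}}>0$ (recall that the ancestor threshold $r$ is chosen so that $\pi_{\text{good}}$ is strictly positive).

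The main obstacle is the rigorous justification of interchanging summation and expectation, since the double Haar series is in general only conditionally convergent. I would handle this by first truncating to the (finitely many) cubes with $\ell(I),\ell(J)\in[2^{-N},2^N]$ that intersect a large ball containing $\operatorname{supp}\alpha\cup\operatorname{supp}\beta$, performing the averaging trick on this truncation (where Fubini applies trivially), and then passing to the limit $N\to\infty$, using the $L^p$-boundedness of $T$ and the $C^1_0$-regularity of $\alpha$ and $\beta$ to control the tail contribution. This kind of truncation argument is standard for Haar expansions of smooth compactly supported test functions, so no new ideas beyond those in the scalar-valued proof in \cite{hytonen2011} are required.
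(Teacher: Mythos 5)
The step where you insert the goodness indicator is not justified as written. Fix $(I_0,J_0)\in\mathcal{D}^0\times\mathcal{D}^0$ with $\ell(I_0)<\ell(J_0)$, so that $I:=I_0\dot+\omega$ is the strictly smaller cube. As you note, the individual matrix element depends on the positions of $I$ and $J:=J_0\dot+\omega$, hence on the coordinates $\omega_j$ with $2^{-j}<\ell(J_0)$. But the goodness of $I$ is governed by its relative position inside \emph{all} of its $\mathcal{D}^\omega$-ancestors of side length $\geq 2^r\ell(I)$, and that relative position is determined by the coordinates $\omega_j$ with $2^{-j}\geq\ell(I_0)$. These two index sets overlap on $\{j:\ell(I_0)\leq 2^{-j}<\ell(J_0)\}$, which is nonempty; indeed these shared coordinates control precisely the position of $I$ inside $J$, on which both the matrix element and the goodness indicator depend. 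Conditioning on the positions of $I$ and $J$ therefore does \emph{not} leave the goodness of $I$ with probability $\pi_{\text{good}}$: the conditional probability depends on the values of the shared coordinates. Your phrase ``essentially disjoint set of $\omega$-coordinates,'' and the subsequent ``probability exactly $\pi_{\text{good}}$,'' are incorrect at the level of a fixed pair $(I_0,J_0)$.

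The correct argument --- the content of Hyt\"onen's proof \cite[Proposition 3.5]{hytonen2011} --- first carries out the sum over the larger cube $J$. For fixed $I$, the sum over $J\in\mathcal{D}^\omega$ with $\ell(J)\geq\ell(I)$ telescopes to a conditional expectation of $\beta$ at the scale of $I$:
\begin{equation*}
\sum_{\substack{J\in\mathcal{D}^\omega:\\ \ell(J)\geq\ell(I)}}\langle\beta,h_J\rangle\langle h_J,Th_I\rangle\langle h_I,\alpha\rangle
=\bigl\langle\mathbb{E}^\omega_{\ell(I)/2}\beta,\,Th_I\bigr\rangle\langle h_I,\alpha\rangle,
\end{equation*}
where $\mathbb{E}^\omega_{\ell(I)/2}\beta$ denotes the average of $\beta$ over the $\mathcal{D}^\omega$-cubes of side $\ell(I)/2$. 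The crucial point is that $\mathbb{E}^\omega_{\ell(I)/2}\beta$ depends on $\omega$ only through the $\mathcal{D}^\omega$-grid at that single scale, i.e.\ only through $\omega_j$ with $2^{-j}<\ell(I_0)/2$; the dependence on the coarser grid structure, and in particular on the problematic shared coordinates, has been averaged out in the telescoping. Thus, for fixed $I_0$, the entire contribution is $\sigma(\omega_j:2^{-j}<\ell(I_0))$-measurable, while the goodness of $I_0\dot+\omega$ is $\sigma(\omega_j:2^{-j}\geq\ell(I_0))$-measurable, and independence now genuinely holds, so that the indicator can be inserted at the cost of the factor $\pi_{\text{good}}$. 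A symmetric argument, collapsing the sum over $I$ with $\ell(I)>\ell(J)$ into a conditional expectation of $\alpha$, handles the case where $J$ is strictly smaller (with a consistent tie-break at $\ell(I)=\ell(J)$). Your Haar expansion, the triviality of averaging the $\omega$-independent left-hand side, and the truncation you invoke to justify interchanging sum and expectation are all fine; what is missing is precisely this summation over the larger cube \emph{before} invoking independence.
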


\noindent Let $C^1_0(\mathbb{R}^d;\mathbb{R})\otimes E$ denote the set of all finite linear combinations of the form 
$$
f=\sum_{i=1}^I \alpha_i e_i\text{ with } \alpha_i \in C^1_0(\mathbb{R}^d;\mathbb{R}) \text{ and } e_i\in E,
$$
set which is dense in $L^p(\mathbb{R}^d;E)$.
By linearity, Theorem \ref{scalaraveratingtrick} extends to vector-valued functions.
\begin{corollary}Let $E$ be a Banach space.
Assume that $T:L^p(\mathbb{R}^d;E)\to L^p(\mathbb{R}^d;E)$ is bounded. Then
$$
\langle g, Tf \rangle =\frac{1}{\pi_\text{good}}\mathbb{E}_\omega \sum_{\substack{I\in\mathcal{D}^\omega,J\in\mathcal{D}^\omega:\\ \mathrm{smaller}\{I,J\}\text{is good} }} \langle g,h_J \rangle \langle h_J,T h_I \rangle \langle h_I, f\rangle
$$  
for all $g\in C^1_0(\mathbb{R}^d;\mathbb{R})\otimes E^*$ and $f\in C^1_0(\mathbb{R}^d;\mathbb{R})\otimes E$.
\end{corollary}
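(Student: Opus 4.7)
The plan is to reduce this corollary to its already-established scalar counterpart, Proposition \ref{scalaraveratingtrick}, purely by bilinearity. Both sides of the claimed identity are bilinear in the pair $(g,f)\in (C^1_0(\mathbb{R}^d;\mathbb{R})\otimes E^*)\times(C^1_0(\mathbb{R}^d;\mathbb{R})\otimes E)$, so it suffices to prove it for pure tensors $f=\alpha\otimes e$ and $g=\beta\otimes e^*$ with $\alpha,\beta\in C^1_0(\mathbb{R}^d;\mathbb{R})$, $e\in E$, and $e^*\in E^*$; the general case then follows by summing finitely many such contributions.

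For a fixed pair $(e,e^*)$, I introduce the auxiliary scalar operator $T^{e^*,e}\colon L^p(\mathbb{R}^d;\mathbb{R})\to L^p(\mathbb{R}^d;\mathbb{R})$ defined pointwise by $(T^{e^*,e}\alpha)(x):=\langle e^*,\,T(\alpha\otimes e)(x)\rangle$. Boundedness of $T$ on $L^p(\mathbb{R}^d;E)$ gives immediately that $T^{e^*,e}$ is bounded on $L^p(\mathbb{R}^d;\mathbb{R})$ with operator norm at most $\|T\|\cdot|e|_E\cdot|e^*|_{E^*}$, so Proposition \ref{scalaraveratingtrick} applies to $T^{e^*,e}$ with the scalar test functions $\beta,\alpha$.

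Interpreting the vector matrix element $\langle h_J, Th_I\rangle\in\mathcal{L}(E)$ as the operator $e\mapsto \int h_J(x)\, T(h_I\otimes e)(x)\,\mathrm{d}x$, one verifies the routine bookkeeping identities
$$
\langle g,Tf\rangle=\langle\beta, T^{e^*,e}\alpha\rangle,
$$
$$
\langle g,h_J\rangle\,\langle h_J, Th_I\rangle\,\langle h_I,f\rangle=\langle\beta, h_J\rangle\,\langle h_J, T^{e^*,e}h_I\rangle\,\langle h_I,\alpha\rangle,
$$
where the scalar matrix elements satisfy $\langle h_J, T^{e^*,e}h_I\rangle=\langle e^*,\,\langle h_J, Th_I\rangle e\rangle$. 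Substituting these identities into the scalar conclusion of Proposition \ref{scalaraveratingtrick} applied to $T^{e^*,e}$ yields the desired vector-valued formula term by term.

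The only step that asks for any real care is confirming that $\langle h_J, Th_I\rangle$ is a bona fide element of $\mathcal{L}(E)$; this is immediate from the $L^p(\mathbb{R}^d;E)$-boundedness of $T$ applied to the compactly supported function $h_I\otimes e$ together with H\"older's inequality against $h_J\otimes e^*$. No new convergence, measurability, or interchange issue arises beyond what was already handled in the scalar case, so the extension is essentially a matter of bookkeeping once the bilinearity reduction is in place.
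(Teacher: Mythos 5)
Your proposal is correct and follows essentially the same route as the paper, which simply asserts that Proposition \ref{scalaraveratingtrick} ``extends by linearity'' to the vector-valued setting: you have merely spelled out that one-line assertion by reducing to pure tensors $f=\alpha\otimes e$, $g=\beta\otimes e^*$, introducing the bounded scalarization $T^{e^*,e}$, and verifying the matching bookkeeping identities $\langle h_J, T^{e^*,e}h_I\rangle=\langle e^*, \langle h_J, Th_I\rangle e\rangle$. No new convergence or interchange issues arise, since for pure tensors the vector-valued sum is literally the scalar sum term by term.
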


Next, the paraproducts are extracted. The dyadic system $\mathcal{D}^\omega$ is suppressed in the notation from now on. Consider the summation
$$
\sum_{\substack{I,J:\\ \mathrm{smaller}\{I,J\}\text{ is good} }} \langle g,h_J \rangle \langle h_J,T h_I \rangle \langle h_I, f\rangle.
$$
In the case \lq$I\subsetneq J$\rq, the paraproduct $\Pi_{T^*1}^*$ is extracted as follows: Let $J_I$ denote the dyadic child of $J$ that contains $I$. Then
\begin{equation*}
\begin{split}
\langle h_J,Th_I \rangle &= \langle 1_{J_I^c}h_J,Th_I \rangle +\langle h_J \rangle_I \langle 1_{J_I} ,Th_I \rangle\\&=\langle 1_{J_I^c}(h_J-\langle h_I \rangle _I),Th_I \rangle +\langle h_J \rangle_I \langle 1_{J_I}+1_{J_I^c} ,Th_I \rangle.
\end{split}
\end{equation*}
Summing the last term yields
\begin{equation}\label{t1interpretation}
\begin{split}
\sum_{I,J: I\subsetneq J} \langle g, h_J \rangle \langle h_J \rangle_I \langle 1 ,Th_I \rangle \langle h_I,f \rangle&=\sum_{I}  \langle \sum_{J : J\supsetneq I}\langle g, h_J \rangle h_J \rangle_I \langle 1 ,Th_I \rangle \langle f,h_I \rangle\\
&=\langle \sum_{I} \langle g \rangle_I \langle 1,Th_I \rangle h_I ,f \rangle=:\langle \Pi_{T^*1}g ,f \rangle.
\end{split}
\end{equation}
Similarly, in the case \lq \lq$J\subsetneq I$\rq\, the paraproduct $\Pi_{T1}$ is extracted. For the remaining, it is supposed that the paraproducts are extracted,
and hence the convention
$$
\langle h_J,Th_I \rangle:=\langle 1_{J_I^c}(h_J-\langle h_I \rangle _I),Th_I \rangle\quad\text{whenever $I\subsetneq J$},
$$
is used together with the similar convention whenever $J\subsetneq I$.

Next, the summation is rearranged according to the minimal common dyadic ancestor of $I$ and $J$, which is denoted by $I\vee J$. (If $I\subseteq J$, then $I\vee J= J$. If $I\cap J=\emptyset$, then a common dyadic ancestor exists because one of the cubes is good.)

By splitting the summation according to which one of the cubes $I$ and $J$ has smaller side length (and hence is good), and  by rearranging the summation according to which cube $K$ is the minimal common dyadic ancestor $I\vee J$ and what is the size of $I$ and $J$ relative to $I\vee J$, one obtains
\begin{equation*}
\begin{split}
\sum_{\substack{I,J:\\ \mathrm{smaller}\{I,J\}\text{ is good}}}&= 
\sum_{i,j: i\geq j}\sum_{K}\sum_{\substack{I,J: I \vee J=K, \\I \text{ is good},\\ \ell(I)=2^{-i}\ell(K),\\\ell(J)=2^{-j}\ell(K)}}   +\sum_{i,j: j> i} \sum_{K}\sum_{\substack{I,J:I \vee J=K,\\ J \text{ is good},\\ \ell(I)=2^{-i}\ell(K),\\\ell(J)=2^{-j}\ell(K)}} .
\end{split}
\end{equation*}
Note that, for $K=I\vee J$, one can write
\begin{equation*}
\begin{split}
&\sum_{\substack{I,J: I \vee J=K,\\I \text{ is good},\\\ell(I)=2^{-i}\ell(K),\\\ell(J)=2^{-j}\ell(K)}}  \langle g, h_J \rangle \langle h_J, T h_I \rangle \langle h_I, f\rangle= \langle g,  D_K^{j}A^{ij}_KD_K^i f\rangle
\end{split}
\end{equation*}
by defining 
$$A^{ij}_Kf(x')=\frac{1_K(x')}{\lvert K \rvert}\int_Ka^{ij}_K(x',x)f(x)\,\mathrm{d}x$$
with
$$
a^{ij}_K(x',x):=\lvert K \rvert \sum_{\substack{I,J:I \vee J=K,\\ \mathrm{smaller}\{I,J\} \text{ is good},\\\ell(I)=2^{-i}\ell(K),\\\ell(J)=2^{-j}\ell(K)}}  h_J(x')h_I(x) \langle h_J, Th_I \rangle.
$$
Altogether, it is obtained that
$$
\langle g, Tf \rangle =\frac{1}{\pi_\text{good}}\mathbb{E}_\omega \sum_{i,j} \langle g, \sum_{K\in\cd^\omega} D^j_K A^{ij}_K D^i_K f \rangle+ \frac{1}{\pi_\text{good}}\mathbb{E}_\omega \langle g, \big(\Pi_{T1}^{\cd^\omega}+(\Pi_{T^*1}^{\cd^\omega})^*\big)f \rangle.
$$
\subsection{Estimating the $R$-bounds of the matrix elements}We may consider the case $i\geq j$ (which means $\ell(I)\leq \ell(J)$), since, by duality, the case $i>j$ can be treated similarly. It remains to estimate the $R$-bound of the family $\{a^{ij}_K(x,x') : K\in\cd, x\in K,x'\in K\}$ of the operator-valued kernels defined by
$$
a^{ij}_K(x',x):=\lvert K \rvert \sum_{\substack{I,J: I \vee J=K \\ I \text{ is good},\\\ell(I)=2^{-i}\ell(K),\\\ell(J)=2^{-j}\ell(K)}}  h_J(x')h_I(x) \langle h_J, Th_I \rangle
$$
with $i\geq j$ (and hence $\ell(I)\leq \ell(J)$).
 We divide this into cases according to two criteria. The first criterion is whether $K$ is much bigger than $I$. The second criterion is how the cubes $I$ and $J$ intersect: Whether $I\subsetneq J$ (in which case $K=J$), $I=J$ (in which case $K=I=J$), or $I\cap J=\emptyset$. In total, we have five cases: \begin{itemize}
\item $\ell(K)>2^r\ell(I) \text{ and } I\cap J=\emptyset$,
\item $\ell(J)>2^{r}\ell(I) \text{ and } I\subsetneq J$ (in this case $K=J$),
\item $\ell(K)\leq 2^r\ell(I) \text{ and } I\cap J=\emptyset$,
\item $\ell(J)\leq 2^r\ell(I) \text{ and } I\subsetneq J$ (in this case $K=J$), and
\item $I=J$ (in this case $K=I=J$).
\end{itemize}
These cases are tackled in Lemmas \ref{firstcase} through \ref{lastcase}, which complete the proof of the representation theorem by assuring that
$$
\mathcal{R}(\{a^{ij}_{K}(x',x) : K\in\mathcal{D},x\in K,  x'\in K \})\lesssim_{r,\gamma,d}(\mathcal{R}_{\text{CZ}_0}+\mathcal{R}_{\text{CZ}_\alpha}+\mathcal{R}_{\text{WBP}}) 2^{-(1-\epsilon)\alpha\max\{i,j\}},
$$ 
under the choice $\gamma:=\frac{\epsilon \alpha}{\alpha+d}$ of the boundary exponent $\gamma\in(0,1)$.

\begin{lemma}[Case \lq$\ell(I)\leq\ell(J), \ell(K)>2^r\ell(I) \text{, and } I\cap J=\emptyset$\rq]\label{firstcase}Suppose that $i$ and $j$ are nonnegative integers such that $i>r$ and $i\geq j$. Let
$$
a^{ij}_{K}(x',x):=\lvert K \rvert \sideset{}{'}\sum_{I,J} \langle h_J,Th_I\rangle h_I(x)h_J(x'),
$$ 
where the summation is over all the dyadic cubes $I$ and $J$ such that $I\cap J=\emptyset$, $I\vee J=K$, $\ell(I)=2^{-i}\ell(K)$, $\ell(J)=2^{-j}\ell(K)$, and $I$ is good with threshold $r$ and exponent $\gamma$. Then
$$
\mathcal{R}(\{a^{ij}_{K}(x',x) : K\in\mathcal{D}, x\in K \text{ and } x'\in K \})\lesssim\mathcal{R}_{\text{CZ}_\alpha}2^{-i(\alpha(1-\gamma)-\gamma d)}.
$$
\end{lemma}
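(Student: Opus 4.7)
The plan is to reduce the operator-valued $R$-bound estimate to a scalar integral estimate by applying Proposition \ref{propertyintegral} (averaging preserves $R$-bounds) to the $R$-bounded family of operators arising from the Hölder-type Rademacher standard estimate.

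First I would observe that for each fixed $K\in\cd$ and each pair $(x,x')\in K\times K$, the primed sum defining $a^{ij}_K(x',x)$ contains at most one nonzero term: the one for which $I$ is the unique dyadic cube at scale $2^{-i}\ell(K)$ containing $x$ and $J$ is the unique dyadic cube at scale $2^{-j}\ell(K)$ containing $x'$, provided $I\cap J=\emptyset$, $I\vee J=K$, and $I$ is good. Using the mean-zero property of $h_I$ on $I$, I then rewrite the matrix element as
$$
\langle h_J, Th_I\rangle = \iint_{J\times I} h_J(y)h_I(z)\bigl[k(y,z)-k(y,c_I)\bigr]\,dz\,dy,
$$
where $c_I$ is the center of $I$. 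The second Rademacher Hölder estimate says that the operator family $\{[k(y,z)-k(y,c_I)]\,|y-z|^{d+\alpha}|z-c_I|^{-\alpha} : |z-c_I|<\tfrac{1}{2}|y-z|\}$ is $R$-bounded by $\mathcal{R}_{CZ_\alpha}$, so factoring expresses $a^{ij}_K(x',x)$ as an integral of a member of this family against the real-valued weight
$$
\lambda_{K,x,x'}(y,z) := |K|\,h_I(x)h_J(x')h_J(y)h_I(z)\cdot\frac{|z-c_I|^\alpha}{|y-z|^{d+\alpha}}.
$$

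Second, Proposition \ref{propertyintegral} yields
$$
\mathcal{R}\bigl(\{a^{ij}_K(x',x)\}_{K,x,x'}\bigr)\leq \mathcal{R}_{CZ_\alpha}\cdot \sup_{K,x,x'}\iint |\lambda_{K,x,x'}(y,z)|\,dy\,dz,
$$
and it remains to bound this supremum. Using $|h_I|\le |I|^{-1/2}$, $|h_J|\le |J|^{-1/2}$, $|z-c_I|\le\sqrt d\,\ell(I)$ on $I$, and the distance lower bound $|y-z|\gtrsim \ell(I)^\gamma\ell(K)^{1-\gamma}$ for $y\in J$ and $z\in I$ (coming from the goodness of $I$), a direct calculation gives the desired bound of order $(\ell(I)/\ell(K))^{\alpha(1-\gamma)-\gamma d}=2^{-i(\alpha(1-\gamma)-\gamma d)}$.

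The main obstacle is establishing the distance lower bound $|y-z|\gtrsim \ell(I)^\gamma\ell(K)^{1-\gamma}$ from the goodness condition. The argument is: since $I\vee J=K$, the cubes $I$ and $J$ lie in distinct dyadic children of $K$, so $J$ is contained in the complement (within $K$) of the child $K'$ of $K$ that contains $I$; applying goodness to the ancestor $K'$ of $I$ (which has scale $\ell(K')\ge 2^{r-1}\ell(I)$, sufficient once $r$ is chosen large relative to $\gamma$) gives $\operatorname{dist}(I,(K')^c)\ge(\ell(I)/\ell(K'))^\gamma\ell(K')$, which rearranges to the claimed bound up to a factor of $2$. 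A useful byproduct is the admissibility condition $|z-c_I|<\tfrac{1}{2}|y-z|$ required to invoke the Rademacher Hölder estimate, which holds whenever $\ell(I)\ll\ell(I)^\gamma\ell(K)^{1-\gamma}$, i.e., again for $r$ sufficiently large.
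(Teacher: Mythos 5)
Your proposal is correct and follows essentially the same route as the paper: both rewrite the matrix element via the kernel's Hölder difference $k(\cdot,z)-k(\cdot,c_I)$, factor out a member of the $R$-bounded family from the second Rademacher standard estimate against a scalar kernel $\lambda$, apply Proposition \ref{propertyintegral}, and establish the distance lower bound by applying goodness of $I$ to the dyadic child $K_I$ of $K$ containing $I$ (which is disjoint from $J$ since $I\vee J = K$). One small imprecision: you write $\ell(K_I)\geq 2^{r-1}\ell(I)$ and hedge that this is ``sufficient once $r$ is chosen large relative to $\gamma$,'' but the goodness condition has the fixed threshold $\ell(K_I)\geq 2^{r}\ell(I)$ with no such freedom; fortunately the hypothesis $i>r$ gives exactly $\ell(K_I)=\ell(K)/2\geq 2^{r}\ell(I)$, so goodness applies directly and no tuning of $r$ against $\gamma$ is needed at this step.
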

\begin{proof}
We observe that for each triplet $(K, x, x')$ either the sum is empty or there is a unique $I_{K,x}$ and a unique $J_{K,x'}$ satisfying the summation condition. Let $y_{I_{K,x}}$ denote the center of the dyadic interval $I_{K,x}$. By using the integral representation of the Calder\'on--Zygmund operator $T$, and by using the cancellation of the Haar functions, we write
\begin{equation*}
\begin{split}
a^{ij}_{K}(x',x)=&\int_{\mathbb{R}^d\times\mathbb{R}^d}(k(y',y)-k(y',y_{I_{K,x}}))\left(\frac{\lvert y-y'\rvert }{\lvert y-y_{I_{K,x}} \rvert}\right)^{\alpha}\lvert y-y' \rvert^{d}1_{I_{K,x}}(y)1_{J_{K,x'}}(y')\\
&\times \lvert K \rvert \left(\frac{\lvert y-y_{I_{K,x}} \rvert}{\lvert y-y'\rvert}\right)^{\alpha}\frac{1}{\lvert y-y' \rvert^{d}}h_{I_{K,x}}(y)h_{J_{K,x'}}(y')h_{I_{K,x}}(x)h_{J_{K,x'}}(x')\,\mathrm{d}y\,\mathrm{d}y'\\
=:& \int_{\mathbb{R}^d\times\mathbb{R}^d}L_{K,x,x'}(y,y')\times \lambda_{K,x,x'}(y,y')\,\mathrm{d}y\,\mathrm{d}y'.
\end{split}
\end{equation*}

\noindent Under the assumptions, we have $\lvert y-y_{I_{K,x}} \rvert< \frac{1}{2} \lvert y-y'\rvert$, which is checked in the following paragraph. Hence, by the Rademacher standard estimates, $$\mathcal{R}(\{L_{K,x,x'}(y,y') : x\in K,x'\in K \text{ and } y\in \mathbb{R}^d, y'\in\mathbb{R^d}\})\leq \mathcal{R}_{\text{CZ}_\alpha}.$$

Next, we show that 
$$
\sup\{ \int_{\mathbb{R}^d\times\mathbb{R}^d} \lvert \lambda_{K,x,x'}(y,y') \rvert\, \mathrm{d}y'\,\mathrm{d}y : x\in K,x'\in K\} \lesssim_\gamma 2^{-i(\alpha(1-\gamma)-\gamma d)},
$$
which, by Theorem \ref{propertyintegral}, completes the proof. For the remaining, we suppress the dependence on the triplet $(K,x,x')$ in the notation. 
%
Since $y\in I$ and $y_I \in I$, we have $\lvert y-y_I \rvert\leq \frac{1}{2}\ell(I)$, and since $y\in I$ and $y'\in J$, we have $\lvert y-y' \rvert\geq \mathrm{dist}(I,J)$; hence $$
 \left(\frac{\lvert y-y_I \rvert}{\lvert y-y'\rvert}\right)^{\alpha}\frac{1}{\lvert y-y' \rvert^{d}}\leq \left(\frac{\ell(I)}{\mathrm{dist}(I,J)}\right)^\alpha \frac{1}{\mathrm{dist}(I,J)^d}. $$ Therefore
\begin{equation*}
\begin{split}
\int_{\mathbb{R}^d\times\mathbb{R}^d} \lvert \lambda(y,y') \rvert\, \mathrm{d}y'\,\mathrm{d}y \leq \lVert h_I \rVert_\infty \lVert h_J \rVert_\infty \lVert h_I \rVert_1 \lVert h_J \rVert_1 \lvert K \rvert \left(\frac{\ell(I)}{\mathrm{dist}(I,J)}\right)^\alpha \frac{1}{\mathrm{dist}(I,J)^{d}}.&
\end{split}
\end{equation*}
It remains to check that $$
\mathrm{dist}(I,J)\geq 2^\gamma \left(\frac{\ell(I)}{\ell(K)}\right)^\gamma \ell(K).
$$
In particular, this implies that $\lvert y-y_I \rvert \leq \frac{1}{2} \lvert y-y' \rvert$. Let $K_I$ denote the dyadic child of $K$ that contains $I$. Since $\ell(K)>2^r\ell(I)$, we have $\ell(K_I)\geq 2^r\ell(I)$. Therefore, since $I$ is good, we have that $$\mathrm{dist}(I,K_I^c)> \left(\frac{\ell(I)}{\ell(K_I)}\right)^\gamma \ell(K_I)=2^{\gamma} \left(\frac{\ell(I)}{\ell(K)}\right)^\gamma \ell(K).$$
If $K_I$ intersected $J$, then either $K_I\subsetneq J$ (which is not true because we assume that $I$ and $J$ are disjoint) or $K_I\supseteq J$ (which is not true because we assume that $K$ is the minimal dyadic ancestor of $I$ that contains $J$). Therefore $K_I$ does not intersect $J$, and hence 
$$
\mathrm{dist}(I,J)> \mathrm{dist}(I,K_I^c).
$$
%
The proof is completed.
\end{proof}

\begin{lemma}[Case \lq$\ell(I)\leq\ell(J), \ell(K)\leq 2^r\ell(I) \text{, and } I\cap J=\emptyset$\rq]Suppose that $i$ and $j$ are nonnegative integers such that $i\leq r$ and $i\geq j$. Let
$$
a^{ij}_{K}(x',x):=\lvert K \rvert \sideset{}{'}\sum_{I,J} \langle h_J,Th_I\rangle h_I(x)h_J(x'),
$$ 
where the summation is over all the dyadic cubes $I$ and $J$ such that $I\cap J=\emptyset$, $I\vee J=K$, $\ell(I)=2^{-i}\ell(K)$, $\ell(J)=2^{-j}\ell(K)$, and $I$ is good with threshold $r$ and exponent $\gamma$. Then
$$
\mathcal{R}(\{a^{ij}_{K}(x',x) : K\in\mathcal{D}, x\in K \text{ and } x'\in K \})\lesssim_{r,d}\mathcal{R}_{\text{CZ}_0}
$$
\end{lemma}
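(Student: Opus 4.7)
The plan is to proceed in parallel with Lemma \ref{firstcase}, but to invoke the Rademacher size bound $\mathcal{R}_{\text{CZ}_0}$ in place of the Hölder bound, since in the present case we no longer have geometric separation $\operatorname{dist}(I,J)\gg\ell(I)$ to exploit. Instead we will use that the restriction $i\le r$ confines us to only finitely many dyadic configurations at the scale of $K$, so a crude size estimate suffices.

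First, fix $(K,x,x')$ with $x,x'\in K$. Since $h_I(x)\neq 0$ forces $x\in I$, and since the cubes $I\subseteq K$ with $\ell(I)=2^{-i}\ell(K)$ partition $K$, at most one $I=I_{K,x}$ contributes to the sum; similarly a unique $J=J_{K,x'}$. Because $I\cap J=\emptyset$, I may use the kernel representation of $\langle h_J,Th_I\rangle$ and write, as in Lemma \ref{firstcase},
\begin{equation*}
a^{ij}_K(x',x)=\int_{\mathbb{R}^d\times\mathbb{R}^d} L_{K,x,x'}(y,y')\,\lambda_{K,x,x'}(y,y')\,\mathrm{d}y\,\mathrm{d}y',
\end{equation*}
where now (no Hölder cancellation is used)
\begin{equation*}
L_{K,x,x'}(y,y'):=k(y',y)|y-y'|^{d}1_{I}(y)1_{J}(y'),
\end{equation*}
\begin{equation*}
\lambda_{K,x,x'}(y,y'):=|K||y-y'|^{-d}h_{I}(y)h_{J}(y')h_{I}(x)h_{J}(x').
\end{equation*}
By the Rademacher decay estimate, $\mathcal{R}(\{L_{K,x,x'}(y,y')\})\le \mathcal{R}_{\text{CZ}_0}$, so by Proposition \ref{propertyintegral} it suffices to prove
\begin{equation*}
\sup_{K,x,x'}\int_{\mathbb{R}^d\times\mathbb{R}^d} |\lambda_{K,x,x'}(y,y')|\,\mathrm{d}y\,\mathrm{d}y'\lesssim_{r,d}1.
\end{equation*}

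Using $\|h_I\|_\infty=|I|^{-1/2}$ and $\|h_J\|_\infty=|J|^{-1/2}$, this reduces to controlling the scalar integral
\begin{equation*}
\frac{|K|}{|I||J|}\int_I\int_J\frac{\mathrm{d}y\,\mathrm{d}y'}{|y-y'|^d}.
\end{equation*}
The key step, and the only genuinely non-trivial point, is to see that $\int_I\int_J|y-y'|^{-d}\,\mathrm{d}y\,\mathrm{d}y'\lesssim_{r,d}\ell(K)^d$. Rescaling by $\ell(K)$, this integral equals $\ell(K)^d$ times the corresponding integral over disjoint dyadic subcubes $I',J'$ of $[0,1)^d$ of sides $2^{-i},2^{-j}\ge 2^{-r}$; there are only finitely many such configurations when $i,j\le r$, and for each the integrand has its only singularity on $\overline{I'}\cap\overline{J'}$, which is at most a $(d-1)$-face, and an elementary computation (reduce to a one-dimensional integral in the coordinate transverse to the shared face; the remaining transverse integral contributes a factor $\lesssim_d 1/r$ in the separation $r$, and $\int_0^1\int_1^2(b-a)^{-1}\,\mathrm{d}b\,\mathrm{d}a<\infty$) shows the integral is finite. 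Plugging back in gives $|K||I|^{-1}|J|^{-1}\cdot C_{r,d}\ell(K)^d=C_{r,d}\,2^{(i+j)d}\lesssim_{r,d}1$, and Proposition \ref{propertyintegral} concludes the proof. The main obstacle—justifying the integrability of $|y-y'|^{-d}$ over adjacent disjoint cubes—is handled by the scaling argument together with the finiteness of allowed configurations under $i,j\le r$.
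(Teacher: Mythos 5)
Your proposal is correct and follows essentially the same route as the paper: extract the unique $I_{K,x},J_{K,x'}$, write $a^{ij}_K$ as $\int L\cdot\lambda$ with $L=1_I(y)1_J(y')k(y',y)|y-y'|^d$, invoke the $\mathcal{R}_{\text{CZ}_0}$ decay bound together with Proposition \ref{propertyintegral}, and reduce to showing $\frac{|K|}{|I||J|}\int_I\int_J|y-y'|^{-d}\lesssim_{r,d}1$. The only cosmetic difference is in the final integral estimate: you rescale to unit scale and observe there are finitely many admissible configurations of $(I',J')$ when $i,j\le r$, checking integrability across the shared face directly; the paper instead enlarges $J$ to the fixed annulus $(2^{r+1}I)\setminus I$ (using $\ell(K)\le 2^r\ell(I)\Rightarrow K\subseteq 2^{r+1}I$ and $J\subseteq K\setminus I$) and quotes $\int_I\int_{(2^{r+1}I)\setminus I}|y-y'|^{-d}\,\mathrm{d}y'\,\mathrm{d}y\lesssim_{r,d}|I|$. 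These are the same estimate in slightly different clothing. One small flaw in your write-up: the phrase ``contributes a factor $\lesssim_d 1/r$'' reuses $r$ (the ancestor threshold) as the name for the transverse separation variable; you mean that the $(d-1)$-dimensional tangential integral yields $\sim(\text{transverse separation})^{-1}$, after which the remaining one-dimensional integral $\int_0^1\int_1^2(b-a)^{-1}\,\mathrm{d}b\,\mathrm{d}a$ is finite. The underlying computation is correct; just pick a different letter for that variable.
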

\begin{proof}
We note that for each triplet $(K, x, x')$ either the sum is empty or there is a unique $I_{K,x}$ and a unique $J_{K,x'}$ satisfying the summation condition. By using the integral representation of the Calder\'on-Zygmund operator $T$, we write

\begin{equation*}
\begin{split}
a^{ij}_{K}(x',x)=&\int_{\mathbb{R}^d\times\mathbb{R}^d}1_{I_{K,x}}(y)1_{J_{K,x'}}(y')k(y',y)\lvert y-y' \rvert^{d}\\
&\times \lvert K \rvert \frac{1}{\lvert y-y' \rvert^{d}}h_{I_{K,x}}(y)h_{J_{K,x'}}(y')h_{I_{K,x}}(x)h_{J_{K,x'}}(x')\,\mathrm{d}y\mathrm{d}y'\\
=:& \int_{\mathbb{R}^d\times\mathbb{R}^d}L_{K,x,x'}(y,y')\times \lambda_{K,x,x'}(y,y')\,\mathrm{d}y\mathrm{d}y'.
\end{split}
\end{equation*}
By the Rademacher standard estimates, $$\mathcal{R}(\{L_{K,x,x'}(y,y') : x\in K,x'\in K \text{ and } y\in \mathbb{R}^d, y'\in\mathbb{R^d}\})\leq \mathcal{R}_{\text{CZ}_0}. $$
We next check that 
$$
\sup\{ \int_{\mathbb{R}^d\times\mathbb{R}^d} \lvert \lambda_{K,x,x'}(y,y') \rvert\, \mathrm{d}y'\,\mathrm{d}y : x\in K,x'\in K\} \lesssim_{r,d} 1,
$$
which, by Theorem \ref{propertyintegral}, completes the proof.

For the remaining, we suppress the dependence on the triplet $(K,x,x')$ in the notation. Since $\ell(K)\leq 2^r \ell(I)$ and $K\supseteq I$, we have $2^{r+1}I\supseteq K$, and since $K\supseteq J$ and $I\cap J=\emptyset$, we have $(K\setminus I) \supseteq J$; hence $((2^{r+1}I)\setminus I)\supseteq J$. Since $\ell(I)\leq \ell(J)$, $J\subseteq K$ (and hence $\ell(J)\leq\ell(K)$), and $\ell(K)\leq 2^r\ell(I)$, we have $\lvert I \rvert \eqsim_r \lvert J \rvert \eqsim_r \lvert K \rvert$. 
Therefore
\begin{equation*}
\begin{split}
\int_{\mathbb{R}^d\times\mathbb{R}^d} \lvert  \lambda(y,y') \rvert\mathrm{d}y'\,\mathrm{d}y &\leq \lvert K \rvert \lVert h_I \rVert^2_\infty \lVert h_J \rVert^2_\infty\int_I \int_J \frac{1}{\lvert y-y' \rvert^d}\mathrm{d}y'\,\mathrm{d}y\\
&\leq \lvert K \rvert \lVert h_I \rVert^2_\infty \lVert h_J \rVert^2_\infty\int_I \int_{(2^{r+1}I)\setminus I} \frac{1}{\lvert y-y' \rvert^d}\mathrm{d}y'\,\mathrm{d}y\\
&\lesssim_{r,d} \lvert K \rvert \lVert h_I \rVert^2_\infty \lVert h_J \rVert^2_\infty \lvert I \rvert\eqsim_r 1.
\end{split}
\end{equation*}
\end{proof}

\begin{lemma}[Case \lq $I=J=K$\rq]Let
$$
a^{00}_{I}(x',x):=\lvert I \rvert \langle h_I,Th_I\rangle h_I(x)h_I(x').
$$ 
Then
$$
\mathcal{R}(\{a_{I}(x',x) : I\in\mathcal{D}, x\in I \text{ and } x'\in I \})\lesssim_{d}\mathcal{R}_{\text{CZ}_0}+\mathcal{R}_{\text{WBP}}.
$$
\end{lemma}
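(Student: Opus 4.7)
The plan is to reduce the problem to an $R$-bound on the matrix elements themselves: since $h_I$ is constant with values $\pm |I|^{-1/2}$ on the children of $I$, the scalar factor $|I| h_I(x) h_I(x')$ takes values in $\{-1,+1\}$ on $I \times I$, so by the contraction principle (and bookkeeping over the finitely many Haar indices $\eta \in \{0,1\}^d \setminus \{0\}$, which contributes only a dimensional constant) it suffices to show
\[
\mathcal{R}\big(\{\langle h_I, T h_I \rangle : I \in \cd\}\big) \lesssim_d \mathcal{R}_{\text{CZ}_0} + \mathcal{R}_{\text{WBP}}.
\]

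We expand $h_I = |I|^{-1/2} \sum_{I' \in \ch(I)} \epsilon_{I'} 1_{I'}$ with signs $\epsilon_{I'} = \pm 1$ summing to zero, which yields
\[
\langle h_I, T h_I \rangle = \frac{1}{|I|} \sum_{I_1, I_2 \in \ch(I)} \epsilon_{I_1} \epsilon_{I_2} \langle 1_{I_1}, T 1_{I_2} \rangle,
\]
and split into a diagonal contribution ($I_1 = I_2$) and an off-diagonal contribution ($I_1 \neq I_2$). The diagonal part simplifies to $2^{-d} \sum_{I_1 \in \ch(I)} \frac{1}{|I_1|} \int_{I_1} T(1_{I_1})\, \mathrm{d}x$, a fixed finite combination (its size indexed by $d$) of elements of the weak-boundedness family, and so is $R$-bounded by $\mathcal{R}_{\text{WBP}}$ via the triangle inequality for $R$-bounds (Proposition \ref{propertytriangle}).

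For each off-diagonal pair $(I_1, I_2)$ of distinct children we have $I_1 \cap I_2 = \emptyset$, and we factor the kernel representation as
\[
\frac{1}{|I|} \langle 1_{I_1}, T 1_{I_2} \rangle = \int_{\rd \times \rd} \big[k(x,y) |x-y|^d\big] \cdot \frac{1_{I_1}(x) 1_{I_2}(y)}{|I|\, |x-y|^d}\, \mathrm{d}y\, \mathrm{d}x.
\]
By the $R$-bound $\mathcal{R}_{\text{CZ}_0}$ on $\{k(x,y)|x-y|^d\}$ together with Proposition \ref{propertyintegral} (averaging preserves $R$-bounds), it remains to show $\frac{1}{|I|} \int_{I_1} \int_{I_2} |x-y|^{-d}\, \mathrm{d}y\, \mathrm{d}x \lesssim_d 1$ uniformly in $I$. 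This is the main technical point: although $I_1$ and $I_2$ may share a $(d-1)$-dimensional face, the integral is finite. Explicitly, $I_2 \subseteq 3I_1 \setminus I_1$, and for each fixed $x \in I_1$ the inner integral is bounded by $C_d \log(\ell(I_1)/\dist(x, \partial I_1))$; since this logarithm is integrable over $I_1$ with total integral $\lesssim_d |I_1|$, one obtains $\frac{1}{|I|} \int_{I_1} \int_{I_2} |x-y|^{-d} \lesssim_d |I_1|/|I| = 2^{-d}$. Summing over the $O_d(1)$ off-diagonal pairs yields an $R$-bound $\lesssim_d \mathcal{R}_{\text{CZ}_0}$, and adding to the diagonal contribution completes the argument. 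The main obstacle is exactly this verification of finiteness in the borderline case where the singular set $\{x = y\}$ meets the common face of $I_1$ and $I_2$.
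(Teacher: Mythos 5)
Your proof is correct and follows essentially the same route as the paper: decompose $1_I$ into dyadic children, split the resulting double sum into a diagonal part handled by $\mathcal{R}_{\text{WBP}}$ and an off-diagonal part handled by $\mathcal{R}_{\text{CZ}_0}$ together with the averaging property of $R$-bounds, and finish with the triangle inequality over the $O_d(1)$ child pairs. The only cosmetic differences are that you absorb the $\pm 1$ factor $|I|\,h_I(x)h_I(x')$ up front via the contraction principle (the paper carries it along as a $\pm$ sign in the scalar integrand), and you spell out the logarithmic estimate behind $\frac{1}{|I|}\int_{I_1}\int_{I_2}|x-y|^{-d}\lesssim_d 1$, which the paper leaves implicit.
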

\begin{proof}
Let $I_i$ (where $i=1,\ldots,2^d$) denote the dyadic children of $I$. By decomposing $1_{I}=\sum_{I_i}1_{I_i}$, and using the integral representation of the Calder\'on--Zygmund operator $T$, we write

\begin{equation*}
\begin{split}
a^{00}_I(x',x)=&\sum_{I_i,I_j} \lvert I \rvert h_I(x)h_I(x') \langle h_I \rangle_{I_i} \langle h_I \rangle_{I_j} \langle 1_{I_i},T1_{I_j} \rangle\\
=&\sum_{I_i} \lvert I \rvert \lvert I_i \rvert h_I(x)h_I(x')  \langle h_I \rangle_{I_i} \langle h_I \rangle_{I_i}  \frac{\langle 1_{I_i},T1_{I_i} \rangle}{\lvert I_i \rvert}\\
&+\sum_{I_i\neq I_j}   \lvert I \rvert \lvert I_i \rvert h_I(x)h_I(x') \langle h_I \rangle_{I_i} \langle h_I \rangle_{I_j}\int_{\mathbb{R}^d\times\mathbb{R}^d}1_{I_i}(y)1_{I_j}(y')k(y,y')\lvert y-y' \rvert^d \\
&\times \frac{1}{\lvert I_i \rvert}\frac{1}{\lvert y-y' \rvert^d}1_{I_i}(y)1_{I_j}(y')\mathrm{d}y\mathrm{d}y'\\
&= \sum_{I_i}  \pm\frac{\langle 1_{I_i},T1_{I_i} \rangle}{\lvert I_i \rvert}+\sum_{I_i\neq I_j}\pm \int_{\mathbb{R}^d\times\mathbb{R}^d} L_{I_i,I_j}(y,y')\times \lambda_{I_i,I_j}(y,y')\mathrm{d}y\mathrm{d}y'.
\end{split}
\end{equation*}
By the Rademacher standard estimates, we have $$
\mathcal{R}(\{L_{I_i,I_j}(y,y') : I\in\mathcal{D},I_i\neq I_j,y\in I_i,y'\in I_j, \}\leq \mathcal{R}_{\text{CZ}_0}.
$$
Moreover, we have
$$
\sup\{ \int_{\mathbb{R}^d\times\mathbb{R}^d} \lvert \lambda_{I_i,I_j}(y,y') \rvert\, \mathrm{d}y'\,\mathrm{d}y : I\in\mathcal{D}, I_i\neq I_j \} \leq \frac{1}{\lvert I \rvert}\int_{I}\int_{(3I)\setminus I}\frac{1}{\lvert y-y'\rvert^d}\lesssim_d 1.
$$
By the Rademacher weak boundedness property, we have
$$\mathcal{R}(\{ \frac{\langle 1_{I},T1_{I} \rangle}{\lvert I \rvert} : I\in\mathcal{D} \})\leq \mathcal{R}_{\text{WBP}}.$$
The proof is completed by using Theorem \ref{propertyintegral} and Proposition \ref{propertytriangle}.
\end{proof}

\begin{lemma}[Case \lq $\ell(I)<2^{-r}\ell(J), I\subsetneq J$]\label{secondcase}Suppose that $i$ is a nonnegative integer such that $i>r$. Let
$$
a^{i0}_{J}(x',x):=\lvert J \rvert \sideset{}{'}\sum_{I} \langle 1_{J^c_I}(h_J-\langle h_J \rangle_{J_I}),Th_I\rangle h_I(x)h_J(x'),
$$ 
where $J_I$ is the dyadic child of $J$ that contains $I$ and the summation is over all the dyadic cubes $I$ such that $I\subsetneq J$, $\ell(I)=2^{-i}\ell(J)$ and $I$ is good with threshold $r$ and exponent $\gamma$. Then
$$
\mathcal{R}(\{a^{ij}_{J}(x',x) : J\in\mathcal{D}, x\in J \text{ and } x'\in J \})\lesssim_{\gamma}\mathcal{R}_{\text{CZ}_\alpha}2^{-i\alpha(1-\gamma)}.
$$
\end{lemma}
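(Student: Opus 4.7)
The plan is to mirror the approach of Lemma \ref{firstcase} almost verbatim, with the role of the ``good, strictly smaller'' cube $I$ now sitting one level below $J_I$ rather than below a generic common ancestor $K$. For each triple $(J,x,x')$, the primed sum contains at most one dyadic cube $I=I_{J,x}$ (the unique good descendant of $J$ of side length $2^{-i}\ell(J)$ containing $x$), so that $a^{i0}_J(x',x)$ is just $|J|h_I(x)h_J(x')\langle 1_{J_I^c}(h_J-\langle h_J\rangle_{J_I}),Th_I\rangle$ for this single $I$, where $J_I$ is the dyadic child of $J$ containing $I$. First I would expand the last matrix element using the integral representation of $T$ and the cancellation $\int h_I=0$:
\begin{equation*}
\langle 1_{J_I^c}(h_J-\langle h_J\rangle_{J_I}),Th_I\rangle=\iint 1_{J_I^c}(y')\bigl(h_J(y')-\langle h_J\rangle_{J_I}\bigr)\bigl(k(y',y)-k(y',y_I)\bigr)h_I(y)\,dy\,dy',
\end{equation*}
where $y_I$ denotes the center of $I$.

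Next, as in Lemma \ref{firstcase}, I would split the integrand into $L_{J,x,x'}(y,y')\cdot\lambda_{J,x,x'}(y,y')$ with
\begin{equation*}
L_{J,x,x'}(y,y'):=\bigl(k(y',y)-k(y',y_I)\bigr)\left(\frac{|y-y'|}{|y-y_I|}\right)^{\alpha}|y-y'|^{d},
\end{equation*}
and $\lambda$ collecting everything else. To apply the second Rademacher H\"older estimate to $L$, one needs $|y-y_I|<\tfrac{1}{2}|y-y'|$ whenever $y\in I$ and $y'\in J_I^c$; this is where the hypothesis $i>r$ enters. Since $\ell(J_I)=\ell(J)/2=2^{i-1}\ell(I)\geq 2^{r}\ell(I)$, the cube $J_I$ is an eligible ancestor of the good cube $I$, so
\begin{equation*}
\operatorname{dist}(I,J_I^c)>\left(\frac{\ell(I)}{\ell(J_I)}\right)^{\gamma}\ell(J_I),
\end{equation*}
which combined with $|y-y_I|\lesssim_d\ell(I)$ gives $|y-y_I|/|y-y'|\lesssim_d (\ell(I)/\ell(J_I))^{1-\gamma}\lesssim 2^{-(i-1)(1-\gamma)}<1/2$ provided $r$ is chosen sufficiently large.

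Finally, using Proposition \ref{propertyintegral} it remains to show that
\begin{equation*}
\sup_{J,x,x'}\iint|\lambda_{J,x,x'}(y,y')|\,dy\,dy'\lesssim_{\gamma} 2^{-i\alpha(1-\gamma)}.
\end{equation*}
The plan for this bound: use $\|h_I\|_\infty\lesssim|I|^{-1/2}$, $\|h_J\|_\infty\lesssim|J|^{-1/2}$, and $|h_J-\langle h_J\rangle_{J_I}|\leq 2\|h_J\|_\infty$ to reduce to estimating $|I|^{-1}\int_I\int_{J_I^c}|y-y_I|^{\alpha}|y-y'|^{-d-\alpha}\,dy'\,dy$; then bound $|y-y_I|\lesssim\ell(I)$ and use the polar-coordinate estimate $\int_{|y-y'|\geq R_0}|y-y'|^{-d-\alpha}\,dy'\lesssim_\alpha R_0^{-\alpha}$ with $R_0:=\operatorname{dist}(I,J_I^c)\gtrsim(\ell(I)/\ell(J_I))^{\gamma}\ell(J_I)$, producing the factor $(\ell(I)/\ell(J_I))^{\alpha(1-\gamma)}\eqsim 2^{-i\alpha(1-\gamma)}$.

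The computation is essentially routine; the only real subtlety, and the step I would check most carefully, is the goodness argument in the second paragraph, because here the relevant ``ambient'' cube is the child $J_I$ rather than the common ancestor $K=J$ itself, so one needs the assumption $i>r$ (and not just $i\geq r$) to guarantee that $\ell(J_I)\geq 2^{r}\ell(I)$ and hence that the goodness of $I$ actually applies to $J_I$. Everything else parallels Lemma \ref{firstcase} line for line.
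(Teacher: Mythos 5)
Your proposal is correct and follows essentially the same route as the paper's proof: extract the single $I=I_{J,x}$, insert the integral representation of $T$ with the cancellation of $h_I$ at the point $y_I$, split the integrand into the H\"older-bounded factor $L$ and a scalar weight $\lambda$, invoke Proposition~\ref{propertyintegral}, and finally bound $\iint|\lambda|$ by the goodness-based separation $\operatorname{dist}(I,J_I^c)>(\ell(I)/\ell(J_I))^\gamma\ell(J_I)$, which (since $\ell(I)/\ell(J_I)=2^{1-i}$) yields the factor $2^{-i\alpha(1-\gamma)}$. The one place where you phrase things slightly differently is the verification that $|y-y_I|<\tfrac12|y-y'|$: the paper observes directly that $\operatorname{dist}(I,J_I^c)>\ell(I)$, from which the inequality follows immediately without any ``$r$ sufficiently large'' caveat, whereas you argue via $|y-y_I|/|y-y'|\lesssim_d 2^{-(i-1)(1-\gamma)}$ and conclude it is $<1/2$ for $r$ large enough; both are fine (the paper's is a bit cleaner, but yours is equivalent once one notes $(\ell(J_I)/\ell(I))^{1-\gamma}\ge 2^{r(1-\gamma)}\ge 1$). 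Your emphasis on $i>r$ (so that $\ell(J_I)=2^{i-1}\ell(I)\ge 2^r\ell(I)$, making $J_I$ an admissible ancestor for the goodness condition) is exactly the point the paper also flags, so no gap there.
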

\begin{proof}
We observe that for each triplet $(J, x, x')$ either the sum is empty or there is a unique $I_{J,x}$ satisfying the summation condition. Let $y_{I_{J,x}}$ denote the center of the dyadic interval $I_{J,x}$. By using the integral representation of the Calder\'on--Zygmund operator $T$ and by using the cancellation of the Haar functions, we have
\begin{equation*}
\begin{split}
a^{i0}_{J}(x',x)=&\int_{\mathbb{R}^d\times\mathbb{R}^d}(k(y',y)-k(y',y_{I_{J,x}}))\left(\frac{\lvert y-y'\rvert }{\lvert y-y_{I_{J,x}} \rvert}\right)^{\alpha}\lvert y-y' \rvert^{d}\\
&\times \lvert J \rvert \left(\frac{\lvert y-y_{I_{J,x}} \rvert}{\lvert y-y'\rvert}\right)^{\alpha}\frac{1}{\lvert y-y' \rvert^{d}}h_{I_{J,x}}(y)1_{J^c_I}(y')(h_{J}(y')-\langle h_J \rangle_{J_I})\\
&\cdot h_{I_{K,x}}(x)h_{J}(x')\,\mathrm{d}y\mathrm{d}y'\\
=:& \int_{\mathbb{R}^d\times\mathbb{R}^d}L_{J,x,x'}(y,y')\times \lambda_{J,x,x'}(y,y')\,\mathrm{d}y\mathrm{d}y'.
\end{split}
\end{equation*}
Under the assumptions, we have $\lvert y_{I_{J,x}} -y \rvert \leq \frac{1}{2} \lvert y-y' \rvert$, which is checked in next paragraph. Hence, by the Rademacher standard estimates, we have $$\mathcal{R}(\{L_{J,x,x'}(y,y') : x\in J,x'\in J \text{ and } y\in \mathbb{R}^d, y'\in\mathbb{R^d}\})\leq \mathcal{R}_{\text{CZ}_\alpha}. $$
Next, we show that 
$$
\sup\{ \int_{\mathbb{R}^d\times\mathbb{R}^d} \lvert \lambda_{J,x,x'}(y,y') \rvert\, \mathrm{d}y'\,\mathrm{d}y : x\in J,x'\in J\} \lesssim_\gamma 2^{-i\alpha(1-\gamma)},
$$
which, by Theorem \ref{propertyintegral}, completes the proof.

For the remaining, we suppress the dependence on the triplet $(J,x,x')$ in the notation. Since $\mathrm{dist}(I,J_I^c)>\ell(I)$ (which follows from the facts that $I$ is good and $\ell(J_I)\geq 2^r\ell(I))$, since $y\in I$, and since $y'\in J_I^c$, we have that $\lvert y_{I} -y \rvert \leq\frac{1}{2} \lvert y-y' \rvert$. Hence, by the triangle inequality, $\lvert y-y' \rvert \geq \frac{2}{3} \lvert y'-y_I \rvert$. Therefore
$$
\left(\frac{\lvert y-y_I \rvert}{\lvert y-y'\rvert}\right)^{\alpha}\frac{1}{\lvert y-y' \rvert^{d}}\lesssim_{d,\alpha} \ell(I)^\alpha \frac{1}{\lvert y'-y_I \rvert^{\alpha+d}}.
$$
Therefore
\begin{equation}\label{samecalculation1}
\begin{split}
&\int_{\mathbb{R}^d\times\mathbb{R}^d} \lvert \lambda(y,y')\rvert\, \mathrm{d}y'\,\mathrm{d}y\\
&\lesssim_{d,\alpha} \lvert J \rvert \lVert h_{J}-\langle h_J \rangle_{J_I} \rVert_\infty \lVert h_J \rVert_\infty \lVert h_I \rVert_\infty \lVert h_I \rVert_1 \ell(I)^\alpha \int_{J_I^c}\frac{1}{\lvert y'-y_I \rvert^{\alpha+d}}\mathrm{d}y'\\
&\lesssim \left(\frac{\ell(I)}{\mathrm{dist}(I,J_I^c)}\right)^\alpha=\left(\frac{\ell(J_I)}{\mathrm{dist}(I,J_I^c)}\right)^\alpha  \left(\frac{\ell(I)}{\ell(J_I)}\right)^\alpha .
\end{split}
\end{equation}
Since $I$ is good and $\ell(J_I)\geq 2^r \ell(I)$, we have that
$$
\mathrm{dist}(I,J_I^c)>\ell(J_I) \left(\frac{\ell(I)}{\ell(J_I)}\right)^\gamma,
$$
which concludes the proof.
\end{proof}

\begin{lemma}[Case \lq$ \ell(I)\geq 2^{-r}\ell(J), I\subsetneq J$\rq]\label{lastcase}Suppose that $i$ is a nonnegative integer such that $1\leq i\leq r$. Let
$$
a^{i0}_{J}(x',x):=\lvert J \rvert \sideset{}{'}\sum_{I} \langle 1_{J^c_I}(h_J-\langle h_J \rangle_{J_I}),Th_I\rangle h_I(x)h_J(x'),
$$ 
where $J_I$ is the dyadic child of $J$ that contains $I$ and the summation is over all the dyadic cubes $I$ such that $I\subseteq J$, $\ell(I)=2^{-i}\ell(J)$ and $I$ is good with threshold $r$ and exponent $\gamma$. Then
$$
\mathcal{R}(\{a_{J}(x',x) : J\in\mathcal{D}, x\in J \text{ and } x'\in J \})\lesssim_{d,\alpha}\mathcal{R}_{\text{CZ}_\alpha}+\mathcal{R}_{\text{CZ}_0}
$$
\end{lemma}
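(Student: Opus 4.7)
The plan is to adapt the Haar-cancellation argument of Lemma \ref{secondcase}, but combine it with a pure $\mathcal{R}_{\text{CZ}_0}$-estimate on a bounded remainder, because the ancestor-threshold hypothesis used in Lemma \ref{secondcase} (namely $\ell(J_I)\geq 2^r\ell(I)$, which is what lets goodness of $I$ produce the separation $|y-y_I|\leq\tfrac12|y-y'|$) is no longer available in the present regime $i\leq r$. Fix $(J,x,x')$; as before the sum reduces to at most one term indexed by the unique good $I=I_{J,x}\subsetneq J$ with $\ell(I)=2^{-i}\ell(J)$. Using the integral representation of $T$ and the mean-zero property $\int_I h_I=0$, write
\[
a^{i0}_J(x',x)=|J|\,h_I(x)\,h_J(x')\int_{J_I^c}\!\!\int_I k(y',y)\,h_I(y)\,(h_J(y')-\langle h_J\rangle_{J_I})\,dy\,dy'.
\]
Fix a dimensional constant $c_d$ large enough that $|y-y_I|\leq\tfrac12|y-y'|$ for all $y\in I$ whenever $|y'-y_I|>c_d\,\ell(I)$, and split the outer integration into $A:=\{y'\in J_I^c:|y'-y_I|>c_d\ell(I)\}$ and $B:=J_I^c\setminus A$.

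On $A$, I would proceed verbatim as in Lemma \ref{secondcase}: exploiting $\int_I h_I=0$ to replace $k(y',y)$ by $k(y',y)-k(y',y_I)$ inside the inner integral, the integrand decomposes as $L_\alpha(y,y')\lambda_\alpha(y,y')$ with
\[
L_\alpha(y,y'):=(k(y',y)-k(y',y_I))\bigl(|y-y'|/|y-y_I|\bigr)^{\alpha}|y-y'|^{d},
\]
so that $\mathcal{R}(\{L_\alpha\})\leq\mathcal{R}_{\text{CZ}_\alpha}$; the $L^1$-norm of $\lambda_\alpha$ is controlled by the same calculation as in \eqref{samecalculation1}, using $|y-y'|\geq\tfrac23|y'-y_I|$ on $A$ together with the equivalence $|I|\eqsim_r|J|$ coming from $i\leq r$, which yields $\int_A\!\int_I|\lambda_\alpha|\lesssim_{d,\alpha,r}1$.

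On $B$, which is contained in a concentric dilate $CI$ of $I$ with $C\eqsim_{d,r}1$, goodness of $I$ provides no useful separation from $\partial J_I$, so Hölder cancellation is unavailable. Instead I would use the raw decay estimate, writing the contribution as $\int_B\!\int_I L_0\,\lambda_0$ with $L_0(y,y'):=k(y',y)|y-y'|^d$ (giving $\mathcal{R}(\{L_0\})\leq\mathcal{R}_{\text{CZ}_0}$) and $\lambda_0$ carrying a factor $|y-y'|^{-d}$. Collecting prefactors yields $|\lambda_0|\leq 2|I|^{-1}|y-y'|^{-d}$ on $I\times B$ (the factor $|J|\|h_I\|_\infty\|h_J\|_\infty=2^{id/2}\leq 2^{rd/2}$ is absorbed into the $|I|^{-1}$), and the standard near-diagonal estimate
\[
\int_{CI\setminus I}\!\int_I|y-y'|^{-d}\,dy\,dy'\lesssim_{d,r}|I|
\]
then gives $\int_B\!\int_I|\lambda_0|\lesssim_{d,r}1$. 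Applying Proposition \ref{propertyintegral} on each piece and summing via Proposition \ref{propertytriangle} completes the proof with the advertised bound $\lesssim_{d,\alpha,r}\mathcal{R}_{\text{CZ}_\alpha}+\mathcal{R}_{\text{CZ}_0}$.

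The main obstacle is precisely that goodness of $I$ provides no useful lower bound on $\mathrm{dist}(I,J_I^c)$ once $\ell(J_I)<2^r\ell(I)$, so the single Hölder-cancellation decomposition of Lemma \ref{secondcase} cannot be applied uniformly across $J_I^c$. The splitting $J_I^c=A\cup B$ is designed around this: it uses the Hölder estimate only where the separation comes for free, and on the bounded leftover $B$ it relies solely on the fact that $B$ is comparable in size to $I$, so the $|y-y'|^{-d}$ near-diagonal integral is under control, no positive separation between $y$ and $y'$ being required.
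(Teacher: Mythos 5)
Your proposal is correct and matches the paper's own proof in all essentials: the paper likewise splits $1_{J_I^c}=1_{J_I^c\cap(3I)}+1_{J_I^c\cap(3I)^c}$, applies the H\"older estimate (giving $\mathcal{R}_{\text{CZ}_\alpha}$) on the far part exactly as in Lemma~\ref{secondcase}, applies the raw decay estimate (giving $\mathcal{R}_{\text{CZ}_0}$) on the near part where the $|y-y'|^{-d}$ near-diagonal integral is controlled by $|I|$, and then combines via Proposition~\ref{propertyintegral} and Proposition~\ref{propertytriangle}. Your choice of cutoff $\{|y'-y_I|>c_d\ell(I)\}$ versus the paper's $(3I)^c$ is a cosmetic difference.
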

\begin{proof}
We observe that for each triplet $(J, x, x')$ either the sum is empty or there is a unique $I_{J,x}$ satisfying the summation condition. Let $y_{I_{J,x}}$ denote the center of the dyadic interval $I_{J,x}$. We split $1_{J_I^c}=1_{J_I^c\cap (3I)}+1_{J_I^c\cap (3I)^c}$. By using the integral representation of the Calder\'on--Zygmund operator $T$, and by using the cancellation of the Haar functions, we have
\begin{equation*}
\begin{split}
&a^{i0}_{J}(x',x)\\
=&\int_{\mathbb{R}^d\times\mathbb{R}^d}(k(y',y)-k(y',y_I))\left(\frac{\lvert y-y_{I_{K,x}} \rvert}{\lvert y-y'\rvert}\right)^{-\alpha}\frac{1}{\lvert y-y' \rvert^{-d}}\times \lvert J \rvert \left(\frac{\lvert y-y_{I_{J,x}} \rvert}{\lvert y-y'\rvert}\right)^{\alpha}\\
&\cdot \frac{1}{\lvert y-y' \rvert^{d}}h_{I_{J,x}}(y)1_{J_I^c\cap (3I)^c}(y')(h_{J}(y')-\langle h_J \rangle_{J_I})h_{I_{K,x}}(x)h_{J}(x')\,\mathrm{d}y\mathrm{d}y'\\
&+ \int_{\mathbb{R}^d\times\mathbb{R}^d}k(y',y)\lvert y-y' \rvert^{d} \\
&\times \lvert J \rvert {\lvert y-y' \rvert^{d}}h_{I_{J,x}}(y)1_{J_I^c\cap (3I)}(y')(h_{J}(y')-\langle h_J \rangle_{J_I}) h_{I_{K,x}}(x)h_{J}(x')\,\mathrm{d}y\mathrm{d}y'\\
=:& \int_{\mathbb{R}^d\times\mathbb{R}^d}L_{J,x,x'}(y,y')\times\lambda_{J,x,x'}(y,y')\,\mathrm{d}y\mathrm{d}y'\\
&+\int_{\mathbb{R}^d\times\mathbb{R}^d}M_{J,x,x'}(y,y')\times\mu_{J,x,x'}(y,y')\,\mathrm{d}y\mathrm{d}y'.
\end{split}
\end{equation*}
By the Rademacher standard estimates, $$\mathcal{R}(\{L_{J,x,x'}(y,y') : x\in J,x'\in J \text{ and } y\in \mathbb{R}^d, y'\in\mathbb{R^d}\})\leq \mathcal{R}_{\text{CZ}_\alpha} $$
and $$\mathcal{R}(\{M_{J,x,x'}(y,y') : x\in J,x'\in J \text{ and } y\in \mathbb{R}^d, y'\in\mathbb{R^d}\})\leq \mathcal{R}_{\text{CZ}_0}.$$
The same calculation as in \eqref{samecalculation1} yields
$$
\sup\{ \int_{\mathbb{R}^d\times\mathbb{R}^d} \lvert \lambda_{J,x,x'}(y,y') \rvert\, \mathrm{d}y'\,\mathrm{d}y : x\in J,x'\in J\} \lesssim_{d,\alpha} 1.
$$
Moreover, we have
\begin{equation*}
\begin{split}
&\sup\{ \int_{\mathbb{R}^d\times\mathbb{R}^d} \lvert \mu_{J,x,x'}(y,y') \rvert\, \mathrm{d}y'\,\mathrm{d}y : x\in J,x'\in J\} \\
&\leq \lvert J \rvert \lVert h_I \rVert_\infty \lVert h_J \rVert_\infty \lVert h_J-\langle h_J \rangle \lVert_\infty \lVert h_I  \rVert_\infty \int_{I}\int_{(3I)\setminus I}\frac{1}{\lvert y-y'\rvert^d} \lesssim_{d} 1.
\end{split}
\end{equation*}
By Theorem \ref{propertyintegral} and Proposition \ref{propertytriangle}, the proof is completed.
\end{proof}

\section*{Acknowledgments}
Both authors are supported by the European Union through the ERC Starting Grant \lq Analytic-probabilistic methods for borderline singular integrals\rq. They are part of the Finnish Centre of Excellence (CoE) in Analysis and Dynamics Research.
\bibliographystyle{plain}
\bibliography{t1operatorkernel}
\end{document}